\newcommand{\source}{\mathrm{s}}
\newcommand{\target}{\mathrm{t}}
\newcommand{\ac}{\mathrm{ac}} 
\newcommand{\Ani}{\categ{Ani}}
\newcommand{\sSpc}{\categ{sAni}}
\newcommand{\Nec}{\categ{Nec}}
\newcommand{\Necop}{\categ{Nec}^{\op}}
\newcommand{\Ar}{\mathrm{Ar}}
\newcommand{\too}{\longrightarrow}
\newcommand{\real}{\abs}
\newcommand{\Joints}{\mathrm{J}}
\newcommand{\act}{\mathrm{act}}
\newcommand{\Deltaact}{\DDelta^{\act}}
\newcommand{\BAut}{\mathrm{BAut}}
\newcommand{\Lan}{\mathrm{Lan}}
\renewcommand{\Ran}{\mathrm{Ran}}
\renewcommand{\Spc}{\Ani}
\title{\Large Fully faithful functors and pushouts of \categories}
\author{\normalsize Peter J. Haine \and\normalsize Maxime Ramzi \and\normalsize Jan Steinebrunner}
\date{\normalsize \today}
\begin{document}

\maketitle


\begin{abstract} 
	We study stability properties of fully faithful functors, and compute mapping anima in pushouts of \categories along fully faithful functors. 
	We provide applications of these calculations to pushouts along Dwyer functors and Reedy categories.  
\end{abstract}

\setcounter{tocdepth}{1}

\tableofcontents


\setcounter{section}{-1}

\section{Introduction}

While limits of categories and \categories are generally easy to understand from a computational perspective, colimits tend to be less tractable.
In principle, a colimit of \categories can be computed by unstraightening the functor and inverting all the cocartesian edges, but in general computing this localization is not feasible.

However, in special cases, colimits of \categories can be more approachable.
For example, colimits indexed by \groupoids or filtered colimits are rather straightforward. 
In this article, we focus on a special class of \emph{diagrams} as opposed to diagram \emph{shapes}; namely, we look at pushouts along fully faithful embeddings. 
In this case, we are able to compute all mapping anima in the pushout. 
The following is a combination of our main results, scattered as \Cref{thm:fully_faithful_functors_are_stable_under_pushout,cor:map3types,prop:map4}.
Given \acategory $ \Ccal $, we write $ \Ar(\Ccal) $ for the arrow \category of $ \Ccal $ and $ \abs{\Ccal} $ for the realization or classifying anima of $ \Ccal $.

\begin{theorem}\label{intro_thm:pushouts_along_fully_faithful_functors_and_their_mapping_anima}
   Consider a pushout square of \categories
	\begin{equation*}
        \begin{tikzcd}[column sep=2.5em]
            \Acal \arrow[r, "g"] \arrow[d, "f"', hooked] \arrow[dr, phantom, very near end, "\ulcorner", xshift=0.25em, yshift=-0.25em] & \Ccal \arrow[d, "\fbar"] \\
            \Bcal \arrow[r, "\gbar"'] & \Dcal \comma
        \end{tikzcd}
    \end{equation*} 
    where $ f $ is fully faithful. 
    In this case, $ \fbar $ is fully faithful. 
    Furthermore, for all $b\in \Bcal $ and $ c\in \Ccal$, we have equivalences: 
	\begin{enumerate}
	    \item $ \Map_{\Dcal}(\gbar(b),\fbar(c)) \equivalent |\Bcal_{b/}\crosslimits_{\Bcal} \Acal \crosslimits_{\Ccal} \Ccal_{/c}| $. 

	    \item $\Map_{\Dcal}(\fbar(c),\gbar(b)) \equivalent |\Ccal_{c/}\crosslimits_{\Ccal} \Acal \crosslimits_{\Bcal} \Bcal_{/b}|$.
	\end{enumerate}
    Finally, for all $b_0, b_1 \in \Bcal$ we have a pushout square
    \begin{equation*}
    	\begin{tikzcd}[column sep = large]
	        \abs{\Bcal_{b_0/} \crosslimits_{\Bcal} \Acal \crosslimits_{\Bcal} \Bcal_{/b_1}} \arrow[r] \arrow[d] \arrow[dr, phantom, very near end, "\ulcorner", xshift=0.25em, yshift=-0.25em] &
	        \Map_{\Bcal}(b_0, b_1) \arrow[d, "\gbar"] \\
	        \abs{\Bcal_{b_0/} \crosslimits_{\Bcal} \Acal \crosslimits_{\Ccal} \Ar(\Ccal) \crosslimits_{\Ccal} \Acal \crosslimits_{\Bcal} \Bcal_{/b_1}} \arrow[r] & 
	        \Map_{\Dcal}(\gbar(b_0), \gbar(b_1))
	    \end{tikzcd}
    \end{equation*}
    where the left-hand vertical map is induced by 
    \begin{align*}
        \Acal &\too \Acal \crosslimits_{\Ccal} \Ar(\Ccal) \crosslimits_{\Ccal} \Acal, \\ 
        a &\longmapsto (a, \id{g(a)}, a)
    \end{align*}
    and the horizontal maps are defined by composing.
\end{theorem}

For the first ``three types'' of mapping anima
\begin{equation*}
	\Map_{\Ccal}(\fbar(c),\fbar(c')) \comma \qquad \Map_{\Dcal}(\gbar(b),\fbar(c)) \comma \andeq \Map_{\Dcal}(\fbar(c),\gbar(b)) \comma
\end{equation*}
our methods our rather elementary.
They rely on the simple observation that applying $\PSh(-)$ to a pushout square yields a pullback square.
In fact, the fact that fully faithful functors are stable under pushout was already observed by Martini and Wolf \cite[Lemma 6.3.10]{MR4752519}, with the same method (they do not draw the mapping anima conclusions there). 
On the other hand, the ``fourth'' mapping anima $ \Map_{\Dcal}(\gbar(b_0),\gbar(b_1)) $ is more complicated, and, as the formula suggests, involves different methods.
Namely, the description of the fourth mapping anima relies on the explicit formula for Segalification via necklaces proven in \cite{arXiv:2301.0865}.

We provide various applications of \Cref{intro_thm:pushouts_along_fully_faithful_functors_and_their_mapping_anima} to pushouts along sieve inclusions, pushout products, and functors out of Reedy categories.
See \Cref{sec:examples_and_applications}.
We conclude the introduction by stating one sample application.

In studying the homotopy theory of $ 1 $-categories, Thomason introduced a class of fully faithful functors called \textit{Dwyer functors} \cite{MR591388}.
See \Cref{def:Dwyer}.
The main result of \cite{arXiv:2205.02353} is that (homotopy) pushouts of $ 1 $-categories along Dwyer functors remain $ 1 $-categories \cite[Theorem 1.6]{arXiv:2205.02353}.
We prove the following generalization of this result:

\begin{corollary}[{(\Cref{cor:pushouts_along_Dwyer_functors})}]
    Let $\Pcal\subset \Spc$ be a full subcategory of anima containing the empty set, and consider a pushout square of \categories 
    \begin{equation*}
        \begin{tikzcd}[column sep=2.5em]
            \Acal \arrow[r, "g"] \arrow[d, "f"', hooked] \arrow[dr, phantom, very near end, "\ulcorner", xshift=0.25em, yshift=-0.25em] & \Ccal \arrow[d, "\fbar"] \\
            \Bcal \arrow[r, "\gbar"'] & \Dcal \comma
        \end{tikzcd}
    \end{equation*}  
    where $ f $ is a Dwyer functor, and all mapping anima of $\Acal $, $ \Bcal $, and $ \Ccal $ lie in $ \Pcal $. 
    Then the mapping anima of $\Dcal$ also lie in $ \Pcal $.

    In particular, taking $ \Pcal $ to be the \category of $ (n-1) $-truncated anima, we deduce that the inclusion $ \incto{\Cat_{n}}{\Catinfty} $ of $ n $-categories into \categories preserves pushouts along Dwyer functors. 
\end{corollary}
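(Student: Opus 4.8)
The plan is to deduce the corollary from \Cref{intro_thm:pushouts_along_fully_faithful_functors_and_their_mapping_anima}. A Dwyer functor is in particular fully faithful, so that theorem applies: $\fbar$ is fully faithful, and the mapping anima of $\Dcal$ are governed by the theorem's formulas (together with the full faithfulness of $\fbar$). Since $\Pcal$ is a full subcategory of $\Spc$ containing $\emptyset$ and, by hypothesis, every mapping anima of $\Acal$, $\Bcal$, and $\Ccal$, it suffices to prove that \emph{every mapping anima of $\Dcal$ is equivalent to $\emptyset$ or to a mapping anima of $\Bcal$ or $\Ccal$}. Now every object of $\Dcal$ lies in the image of $\fbar$ or of $\gbar$, and any object $\gbar(b)$ with $b$ in the full image of $\Acal$ equals $\fbar(g(b))$; so, choosing representatives, every mapping anima of $\Dcal$ has the form $\Map_{\Ccal}(c,c')$, $\Map_{\Dcal}(\gbar(b),\fbar(c))$, $\Map_{\Dcal}(\fbar(c),\gbar(b))$, or $\Map_{\Dcal}(\gbar(b_0),\gbar(b_1))$ with the $\Bcal$-objects $b$, $b_0$, $b_1$ not in $\Acal$. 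The first is $\Map_{\Ccal}(c,c')\in\Pcal$ by full faithfulness of $\fbar$; the remaining three are handled using the Dwyer structure.

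Recall from \Cref{def:Dwyer} that, identifying $\Acal$ with its full image, $\Acal$ is a cosieve in $\Bcal$ contained in a sieve $\mathcal{W}\subseteq\Bcal$ in which $\Acal$ is reflective, with reflector $l\colon\mathcal{W}\to\Acal$ fixing $\Acal$. (Should \Cref{def:Dwyer} use the mirror-dual convention — $\Acal$ a sieve sitting coreflectively inside a cosieve — the argument below is the same after interchanging sieves with cosieves and sources with targets.) Fix $b\notin\Acal$. Because $\Acal$ is a cosieve, the category $\Acal\crosslimits_{\Bcal}\Bcal_{/b}$ of morphisms into $b$ with source in $\Acal$ is empty; hence $\Map_{\Dcal}(\fbar(c),\gbar(b))=\abs{\Ccal_{c/}\crosslimits_{\Ccal}\Acal\crosslimits_{\Bcal}\Bcal_{/b}}=\emptyset$, and if moreover $b_1\notin\Acal$ then the same vanishing makes both left-hand corners of the pushout square for $\Map_{\Dcal}(\gbar(b_0),\gbar(b_1))$ empty, so that square collapses to give $\Map_{\Dcal}(\gbar(b_0),\gbar(b_1))\simeq\Map_{\Bcal}(b_0,b_1)$.

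The remaining and most delicate case is $\Map_{\Dcal}(\gbar(b),\fbar(c))=\abs{\Bcal_{b/}\crosslimits_{\Bcal}\Acal\crosslimits_{\Ccal}\Ccal_{/c}}$ with $b\notin\Acal$. If $b\notin\mathcal{W}$, then since $\mathcal{W}$ is a sieve no morphism out of $b$ can land in $\Acal\subseteq\mathcal{W}$, so $\Bcal_{b/}\crosslimits_{\Bcal}\Acal=\emptyset$ and the mapping anima vanishes. If $b\in\mathcal{W}$, then $\Bcal_{b/}\crosslimits_{\Bcal}\Acal=\mathcal{W}_{b/}\crosslimits_{\mathcal{W}}\Acal$ since $\mathcal{W}$ is a full sieve, and the adjunction $l\dashv(\Acal\hookrightarrow\mathcal{W})$ identifies this, as a left fibration over $\Acal$, with the slice $\Acal_{l(b)/}$. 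Therefore $\Bcal_{b/}\crosslimits_{\Bcal}\Acal\crosslimits_{\Ccal}\Ccal_{/c}\simeq\Acal_{l(b)/}\crosslimits_{\Ccal}\Ccal_{/c}$, whose classifying anima is the coend $\int^{a\in\Acal}\Map_{\Acal}(l(b),a)\times\Map_{\Ccal}(g(a),c)$, which the co-Yoneda lemma collapses to $\Map_{\Ccal}(g(l(b)),c)$, a mapping anima of $\Ccal$. This is where the Dwyer hypothesis is genuinely used, and I expect it to be the main obstacle: having $\Bcal_{b/}$, $\Acal$, and $\Ccal_{/c}$ individually with mapping anima in $\Pcal$ does not suffice, since their joint classifying anima could a priori be arbitrary; it is precisely the reflector $l$ that turns this classifying anima into a single mapping anima. (Alternatively, one can factor $f$ into a reflective (co)sieve inclusion and a plain (co)sieve inclusion and treat pushouts along each, the latter using the discussion of sieve inclusions in \Cref{sec:examples_and_applications}.) Together with the previous paragraph, this proves the first assertion.

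Finally, the ``in particular'': the $(n-1)$-truncated anima form a full subcategory of $\Spc$ containing $\emptyset$, so the first part shows that when $\Acal$, $\Bcal$, and $\Ccal$ are $n$-categories the pushout $\Dcal$ formed in $\Catinfty$ is again an $n$-category. As $\Dcal$ then already lies in $\Cat_n$ and the localization $\Catinfty\to\Cat_n$ acts as the identity on it, $\Dcal$ also computes the pushout in $\Cat_n$; that is, $\incto{\Cat_n}{\Catinfty}$ preserves this pushout.
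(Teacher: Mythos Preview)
Your proof is correct and follows essentially the same strategy as the paper: use the sieve property to make one cross-term (and the fourth mapping anima) collapse, and use the partial (co)reflection to reduce the other cross-term to a single mapping anima in $\Ccal$. Two remarks. First, the paper's \Cref{def:Dwyer} actually takes $\Acal$ to be a \emph{sieve} with the condition that each nonempty $\Acal\times_{\Bcal}\Bcal_{/b}$ has a terminal object $Rb$ (no auxiliary neighborhood $\mathcal{W}$ is introduced); so it is your parenthetical ``mirror-dual'' version that matches, and your case split ``$b\in\mathcal{W}$ vs.\ $b\notin\mathcal{W}$'' becomes exactly the paper's ``$\Acal\times_{\Bcal}\Bcal_{/b}$ nonempty vs.\ empty''. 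Second, for the fourth mapping anima the paper invokes the sieve-specific \Cref{cor:sieve} rather than the general pushout square of \Cref{prop:map4}; your argument via vanishing of the left column of that square is an equally valid (and essentially equivalent) route to the same identification $\Map_{\Dcal}(\gbar(b_0),\gbar(b_1))\simeq\Map_{\Bcal}(b_0,b_1)$.
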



\subsection{Related work}

In \cite{warn}, Wärn studies a related question, namely path anima in arbitrary pushouts of \groupoids, which is a special case of the general question of computing mapping anima in arbitrary pushouts of \categories.
Wärn's work is in a somewhat orthogonal direction from ours. 
The answer there involves more infinitary operations, which is to be expected as zigzag length in arbitrary localizations is unbounded. 

It would be very interesting to see whether the two approaches can be combined to compute general pushouts of \categories. 


\subsection{Linear overview}\label{intro-subsec:linear_overview}

In \Cref{sec:characterizations_of_fully_faithful_functors}, we record equivalent characterizations of full faithfulness which are helpful for our work but also worth knowing in general. 
In \Cref{sec:stability_properties_of_fully_faithful_functors} we study stability properties of fully faithful functors under categorical operations, one of which being pushouts along arbitrary functors. 
In \Cref{sec:mapping_anima_in_pushouts}, we compute all mapping anima in pushouts along fully faithful functors, modulo a key technical input for the ``fourth'' mapping anima, which we deal with in \Cref{sec:computing_pushouts_via_necklaces} using the theory of necklaces and results of \cite{arXiv:2301.0865}. 
Finally, in \Cref{sec:examples_and_applications}, we give a number of applications.
Among other things we generalize \cite[Theorem 1.6]{arXiv:2205.02353}, and we show that functors out of Reedy \categories admit a latching-matching description analogous to the $ 1 $-categorical situation.


\subsection{Notational conventions}

We write $ \Ani $ for the \category of anima (also referred to as spaces or \groupoids) and $ \Catinfty $ for the \category of \categories.
The inclusion $ \incto{\Ani}{\Catinfty} $ admits both a left and a right adjoint.
We denote the left and right adjoints by 
\begin{equation*}
	\real{-} \colon \fromto{\Catinfty}{\Ani} \andeq (-)^{\equivalent} \colon \fromto{\Catinfty}{\Ani} \comma
\end{equation*}
respectively.
We refer to $ \real{\Ccal} $ as the \defn{realization} or \defn{classifying anima} of $ \Ccal $ and refer to $ \Ccal^{\equivalent} $ as the \defn{(groupoid) core} of $ \Ccal $.


\subsection{Acknowledgments}

We thank David Nadler for asking questions that lead to us writing this note.
We also thank the members of the Copenhagen question seminar for inspiring the content of section \ref{subsec:Dwyer_functors}.

PH gratefully acknowledges support from the NSF Mathematical Sciences Postdoctoral Research Fellowship under Grant \#DMS-2102957. 
MR  was supported by the Danish National Research Foundation through the Copenhagen Centre for Geometry and Topology (DNRF151) while part of this work was conducted, and by the Deutsche Forschungsgemeinschaft (DFG, German Research Foundation) -- Project-ID 427320536 -- SFB 1442, as well as by Germany's Excellence Strategy EXC 2044 390685587, Mathematics Münster: Dynamics--Geometry--Structure during the finishing stages of writing.
JS was supported by the Independent Research Fund Denmark (grant no.~10.46540/3103-00099B) and the Danish National Research Foundation through the "Copenhagen Centre for Geometry and Topology" (grant no.~CPH-GEOTOP-DNRF151).


\section{Characterizations of fully faithful functors}\label{sec:characterizations_of_fully_faithful_functors}

In this section, we recall some basic characterizations of fully faithful functors.
We begin by recalling when adjoints are fully faithful.

\begin{lemma}[{\cite[Lemma 3.3.1]{ambidexterity}}]\label{lem:characterization_of_fully_faithful_adjoints}
	Let $ \adjto{L}{\Ccal}{\Dcal}{R} $ be an adjunction.
	The following are equivalent:
	\begin{enumerate}
		\item The left adjoint $ L $ is fully faithful.

		\item The unit $ \eta \colon \fromto{\id{\Ccal}}{RL} $ is an equivalence.

		\item The composite $ RL \colon \fromto{\Ccal}{\Ccal} $ is an equivalence of \categories (e.g., there exists an equivalence $ RL \equivalent \id{\Ccal} $).
	\end{enumerate}
\end{lemma}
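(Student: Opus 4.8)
The plan is to prove $(1)\Leftrightarrow(2)$ by a Yoneda argument, observe that $(2)\Rightarrow(3)$ is immediate, and reserve the real work for $(3)\Rightarrow(2)$.

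For $(1)\Leftrightarrow(2)$, I would fix $c,c'\in\Ccal$ and study the composite
\[
	\Map_{\Ccal}(c,c') \too \Map_{\Dcal}(Lc,Lc') \too \Map_{\Ccal}(c,RLc') \comma
\]
where the first map is induced by $L$ and the second is the adjunction equivalence. Since the adjunction equivalence sends a morphism $g\colon Lc\to Lc'$ to $Rg\circ\eta_c$, and naturality of $\eta$ identifies $RL(f)\circ\eta_c$ with $\eta_{c'}\circ f$, this composite is postcomposition by the unit $\eta_{c'}\colon c'\to RLc'$. Hence $L$ is fully faithful exactly when $(\eta_{c'})_{*}\colon\Map_{\Ccal}(c,c')\to\Map_{\Ccal}(c,RLc')$ is an equivalence for all $c$ and $c'$; letting $c$ range over $\Ccal$, the Yoneda lemma rephrases this as the assertion that each $\eta_{c'}$ is an equivalence, i.e.\ that $\eta$ is an equivalence. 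The implication $(2)\Rightarrow(3)$ is trivial, as an equivalence $\eta$ exhibits $RL\equivalent\id{\Ccal}$.

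For $(3)\Rightarrow(2)$ we are told only that $RL$ is abstractly an equivalence, so the point is to reconnect it to $\eta$ via the triangle identities, using full faithfulness of $RL$ to pass from statements about $RL\eta$ to statements about $\eta$ itself. Fix $c\in\Ccal$. Since $RL$ is fully faithful, the morphism $R\epsilon_{Lc}\colon RLRLc\to RLc$ is $RL$ applied to a unique $r_c\colon RLc\to c$, and I claim $r_c$ inverts $\eta_c$. Whiskering the triangle identity $\epsilon L\cdot L\eta\equivalent\id{L}$ by $R$ gives $R\epsilon_{Lc}\circ RL(\eta_c)\equivalent\id{RLc}$, hence $RL(r_c\circ\eta_c)\equivalent RL(\id{c})$ and so $r_c\circ\eta_c\equivalent\id{c}$ by full faithfulness. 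On the other side, naturality of $\eta$ at $r_c$ gives $\eta_c\circ r_c\equivalent RL(r_c)\circ\eta_{RLc}=R\epsilon_{Lc}\circ\eta_{RLc}$, which is $\id{RLc}$ by the other triangle identity $R\epsilon\cdot\eta R\equivalent\id{R}$. Thus $\eta_c$ is an equivalence for every $c$, i.e.\ $\eta$ is an equivalence.

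I expect the main obstacle to be exactly $(3)\Rightarrow(2)$: the hypothesis is weaker than it looks, since it does not present the equivalence $RL\equivalent\id{\Ccal}$ in the form of $\eta$, so one cannot conclude directly and must route everything through the triangle identities and full faithfulness of $RL$. The only other point needing (mild) care is the identification, in $(1)\Leftrightarrow(2)$, of the adjunction composite with postcomposition by the unit; in the $\infty$-categorical setting this is part of the standard dictionary relating adjunctions, units, and equivalences on mapping anima, and may simply be cited.
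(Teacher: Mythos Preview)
The paper does not give its own proof of this lemma; it is stated with a citation to \cite[Lemma 3.3.1]{ambidexterity} and used as a black box. Your argument is correct and self-contained: the Yoneda identification underlying $(1)\Leftrightarrow(2)$ is the standard one, and your handling of $(3)\Rightarrow(2)$---producing an inverse $r_c$ to $\eta_c$ by lifting $R\epsilon_{Lc}$ through the fully faithful $RL$ and then verifying both composites via the two triangle identities---is a clean way to close the gap between an abstract equivalence $RL\equivalent\id{\Ccal}$ and the specific equivalence $\eta$. There is nothing to compare against in the paper itself.
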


\begin{lemma}\label{lem:triple_of_adjoints_with_fully_faithful_extremes}
    Let $L \leftadjoint f \leftadjoint R$ be a chain of adjunctions. 
    Then $L$ is fully faithful if and only if $R$ is fully faithful.
\end{lemma}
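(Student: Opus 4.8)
The plan is to reduce the statement to \Cref{lem:characterization_of_fully_faithful_adjoints} by examining the two endofunctors $fL$ and $fR$. Fix notation so that $L, R \colon \fromto{\Ccal}{\Dcal}$ and $f \colon \fromto{\Dcal}{\Ccal}$, with $L \leftadjoint f \leftadjoint R$; then $fL$ and $fR$ are both endofunctors of $\Ccal$. First I would record two one-sided criteria. Applying \Cref{lem:characterization_of_fully_faithful_adjoints} to the adjunction $L \leftadjoint f$ shows that $L$ is fully faithful if and only if $fL$ is an equivalence. For $R$, I would pass to opposite categories: the adjunction $f \leftadjoint R$ dualizes to an adjunction $R^{\op} \leftadjoint f^{\op}$, so \Cref{lem:characterization_of_fully_faithful_adjoints}, applied with $R^{\op}$ now in the role of the left adjoint, shows that $R^{\op}$, equivalently $R$, is fully faithful if and only if $f^{\op} R^{\op} = (fR)^{\op}$ is an equivalence, i.e.\ if and only if $fR$ is an equivalence. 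Here I use that a functor is fully faithful (resp.\ an equivalence) exactly when its opposite is.

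Next I would observe that the adjunctions $L \leftadjoint f$ and $f \leftadjoint R$ compose to an adjunction $fL \leftadjoint fR$ of endofunctors of $\Ccal$: for $c, c' \in \Ccal$ there are natural equivalences $\Map_{\Ccal}(fLc, c') \equivalent \Map_{\Dcal}(Lc, Rc') \equivalent \Map_{\Ccal}(c, fRc')$, the first coming from $f \leftadjoint R$ and the second from $L \leftadjoint f$. It then suffices to note that in any adjunction $G \leftadjoint H$ of endofunctors, $G$ is an equivalence if and only if $H$ is: if $G$ is an equivalence, then $G^{-1}$ is a right adjoint of $G$, so $H \equivalent G^{-1}$ by uniqueness of adjoints, and hence $H$ is an equivalence; the converse is symmetric.

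Combining these observations, $L$ is fully faithful precisely when $fL$ is an equivalence, precisely when $fR$ is an equivalence, precisely when $R$ is fully faithful, which is the claim. I do not expect a genuine obstacle here; the only points requiring a little care are the bookkeeping of which functor becomes the left adjoint after dualizing, and the appeal to uniqueness of adjoints in the \category-theoretic setting, both of which are routine.
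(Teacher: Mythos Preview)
Your proof is correct and follows essentially the same approach as the paper: both reduce to \Cref{lem:characterization_of_fully_faithful_adjoints}, use the composed adjunction $fL \leftadjoint fR$, and invoke uniqueness of adjoints. The paper's version is marginally more direct in that it uses the ``$fL \equivalent \id{}$'' formulation from \Cref{lem:characterization_of_fully_faithful_adjoints}(3) and then immediately concludes $fR \equivalent \id{}$ (since $\id{}$ is self-adjoint), whereas you route through the equivalent ``$fL$ is an equivalence'' formulation and the general fact that adjoints of equivalences are equivalences; but this is a cosmetic difference, not a substantive one.
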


\begin{proof}
	By \Cref{lem:characterization_of_fully_faithful_adjoints}, $ L $ is fully faithful if and only if $ fL \equivalent \id{} $.
	By uniqueness of adjoints, this is equivalent to the requirement that $\id{} \equivalent fR$.
	Again applying \Cref{lem:characterization_of_fully_faithful_adjoints}, this is equivalent to saying that $ R $ is fully faithful.
\end{proof}

As a consequence, we deduce the following characterization of fully faithful functors in terms of presheaves.
	
\begin{lemma}\label{lem:characterization_of_fully_faithful_functors_via_Kan_extension}
	The following are equivalent for a functor of \categories $ f \colon \fromto{\Ccal}{\Dcal} $:
	\begin{enumerate}
		\item The functor $ f $ is fully faithful.

		\item The functor $ \flowershriek \colon \fromto{\PSh(\Ccal)}{\PSh(\Dcal)} $ given by left Kan extension along $ f $ is fully faithful.

		\item The functor $ \flowerstar \colon \fromto{\PSh(\Ccal)}{\PSh(\Dcal)} $ given by right Kan extension along $ f $ is fully faithful.
	\end{enumerate}
\end{lemma}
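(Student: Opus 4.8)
The plan is to exploit the adjoint triple
\[
  \flowershriek \dashv f^{*} \dashv \flowerstar
\]
on presheaf \categories, where $ f^{*} \colon \fromto{\PSh(\Dcal)}{\PSh(\Ccal)} $ is restriction along $ f $, together with the two lemmas just established. Given this chain of adjunctions, \Cref{lem:triple_of_adjoints_with_fully_faithful_extremes} immediately yields the equivalence of conditions (2) and (3), so the only real content is the equivalence $ (1) \Leftrightarrow (2) $.

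First I would reduce $ (2) $ to a statement about representables. By \Cref{lem:characterization_of_fully_faithful_adjoints} applied to $ \flowershriek \dashv f^{*} $, the functor $ \flowershriek $ is fully faithful if and only if the unit $ \eta \colon \fromto{\id{\PSh(\Ccal)}}{f^{*}\flowershriek} $ is an equivalence. Now $ \id{\PSh(\Ccal)} $ and $ f^{*}\flowershriek $ both preserve colimits --- the latter because $ \flowershriek $ is a left adjoint and $ f^{*} $ is itself a left adjoint, namely to $ \flowerstar $. Since the representable presheaves generate $ \PSh(\Ccal) $ under colimits, $ \eta $ is an equivalence on all of $ \PSh(\Ccal) $ if and only if it is an equivalence on each representable $ \Map_{\Ccal}(-, c) $. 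Next I would invoke the standard identification $ \flowershriek \Map_{\Ccal}(-, c) \equivalent \Map_{\Dcal}(f(-), f(c)) $, i.e.\ that $ \flowershriek = \Lan_{f}(-) $ restricted along the Yoneda embedding of $ \Ccal $ is $ f $ followed by the Yoneda embedding of $ \Dcal $ --- this is exactly the universal property exhibiting $ \PSh(\Ccal) $ as the free cocompletion of $ \Ccal $. Consequently $ f^{*}\flowershriek \Map_{\Ccal}(-, c) \equivalent \Map_{\Dcal}(f(-), f(c)) $, and unwinding the definitions the component $ \eta_{\Map_{\Ccal}(-, c)} $ is the canonical natural map $ \fromto{\Map_{\Ccal}(-, c)}{\Map_{\Dcal}(f(-), f(c))} $ induced by $ f $. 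This map is an equivalence for every $ c \in \Ccal $ precisely when $ f $ is fully faithful, which closes the loop.

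The step I expect to require the most care is the last one: verifying that, under the identification $ \flowershriek \Map_{\Ccal}(-, c) \equivalent \Map_{\Dcal}(f(-), f(c)) $, the unit $ \eta $ at $ \Map_{\Ccal}(-, c) $ really is the map induced by $ f $ on mapping anima. This is intuitively clear but pinning it down is a diagram chase combining the Yoneda lemma with the universal property of $ \flowershriek = \Lan_{f} $, and it is the only point in the argument that is not purely formal. I would also pause to double-check the directions of the adjunctions --- in particular that $ f^{*} $ is the restriction functor sitting between $ \flowershriek $ and $ \flowerstar $, hence preserves colimits --- since the reduction to representables depends on exactly this. (One can also note that $ (2) \Rightarrow (1) $ on its own is immediate, by restricting a fully faithful $ \flowershriek $ to representables and using that the Yoneda embeddings of $ \Ccal $ and $ \Dcal $ are fully faithful; but the unit argument delivers both directions at once.)
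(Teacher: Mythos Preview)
Your argument is correct and follows essentially the same route as the paper. The equivalence of (2) and (3) via \Cref{lem:triple_of_adjoints_with_fully_faithful_extremes} is identical, and your treatment of (1) $\Leftrightarrow$ (2) --- reducing the unit $\eta \colon \id{} \to f^{*}\flowershriek$ to representables using that $f^{*}$ is itself a left adjoint, then identifying $\flowershriek \yo_{\Ccal}(c) \simeq \yo_{\Dcal}(f(c))$ --- is exactly the content the paper packages by citing \HTT{Proposition}{5.2.6.3} for the Yoneda square and \HTT{Proposition}{4.3.2.17} for the unit being an equivalence when $f$ is fully faithful; you have simply unpacked those references into a self-contained argument.
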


\begin{proof}
	To see that (2) $ \Rightarrow $ (1), note that by \HTT{Proposition}{5.2.6.3}, there is a commutative square 
	\begin{equation*}
			\begin{tikzcd}[sep=2.5em]
				\Ccal \arrow[r, "f"] \arrow[d, "\yo"', hooked] & \Dcal \arrow[d, "\yo", hooked]  \\ 
				 \PSh(\Ccal) \arrow[r, "f_!"'] &  \PSh(\Dcal)
			\end{tikzcd}
		\end{equation*}
	where $ \yo $ is the Yoneda embedding. 
	Since the Yoneda embedding is fully faithful, if $ f_! $ is fully faithful, then $ f $ is also fully faithful. 

	To see that (1) $ \Rightarrow $ (2), note that (the proof of) \HTT{Proposition}{4.3.2.17} shows that if $ f $ is fully faithful, then the unit for the $f_! \leftadjoint f^*$ adjunction is an equivalence.
	Hence $f_!$ is also fully faithful. 

	That (2) and (3) are equivalent follows from \Cref{lem:triple_of_adjoints_with_fully_faithful_extremes}.
\end{proof}
   



We conclude with the following Segal anima characterization of full faithfulness.

\begin{proposition}[{(see also \cite[Propositions 3.8.6 \& 3.8.7]{arXiv:2103.17141})}]\label{prop:Segal_anima_characterization_of_full_faithfulness}
	The following are equivalent for a functor of \categories $ f \colon \fromto{\Ccal}{\Dcal} $:
	\begin{enumerate}
		\item The functor $ f $ is fully faithful.

		\item The induced square
		\begin{equation*}
			\begin{tikzcd}[sep=3em]
				\Fun([1],\Ccal) \arrow[r, "f \of -"] \arrow[d, "{(\source,\target)}"'] & \Fun([1],\Dcal) \arrow[d, "{(\source,\target)}"]  \\ 
				\Ccal \cross \Ccal \arrow[r, "f \cross f"'] & \Dcal \cross \Dcal
			\end{tikzcd}
		\end{equation*}
		is a pullback square of \categories.

		\item The induced square of groupoid cores
		\begin{equation*}
			\begin{tikzcd}[sep=3em]
				\Fun([1],\Ccal)^{\equivalent} \arrow[r, "f \of -"] \arrow[d, "{(\source,\target)}"'] & \Fun([1],\Dcal)^{\equivalent} \arrow[d, "{(\source,\target)}"]  \\ 
				(\Ccal \cross \Ccal)^{\equivalent} \arrow[r, "f \cross f"'] & (\Dcal \cross \Dcal)^{\equivalent}
			\end{tikzcd}
		\end{equation*}
		is a pullback square of anima.
	\end{enumerate}
\end{proposition}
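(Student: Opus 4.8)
The plan is to prove the chain of implications (1) $\Rightarrow$ (2) $\Rightarrow$ (3) $\Rightarrow$ (1), using the presheaf characterization of \Cref{lem:characterization_of_fully_faithful_functors_via_Kan_extension} together with standard manipulations of functor \categories.

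For (1) $\Rightarrow$ (2): I would identify $\Fun([1], \Ccal)$ with the oplax limit of $f$ — more precisely, recall that an object of $\Fun([1],\Ccal)$ over $(c, c') \in \Ccal \cross \Ccal$ is a morphism $c \to c'$, so the fiber of $(\source, \target)$ over $(c,c')$ is $\Map_{\Ccal}(c,c')$, and similarly the fiber of the right-hand vertical map over $(f(c), f(c'))$ is $\Map_{\Dcal}(f(c), f(c'))$. Since $f$ is fully faithful, the comparison map on these fibers is an equivalence. To upgrade this fiberwise statement to a genuine pullback square of \categories, I would observe that both vertical maps $(\source, \target)$ are cartesian (and cocartesian) fibrations — being evaluation at the endpoints of $[1]$ — and that $f \cross f$ together with $f \of -$ respect the cartesian edges; then a functor between cartesian fibrations which is fiberwise an equivalence, over a base map, exhibits the top as the pullback. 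Alternatively, and perhaps more cleanly, I would deduce (2) from \Cref{lem:characterization_of_fully_faithful_functors_via_Kan_extension}: the square in (2) is the square one gets by taking the "twisted arrow"/Yoneda computation of mapping anima, and can be assembled from the pullback square $\PSh(\Ccal) \cross_{\PSh(\Dcal)} \Dcal \simeq \Ccal$ coming from $f_*$ being fully faithful.

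For (2) $\Rightarrow$ (3): this is immediate — the groupoid core functor $(-)^{\equivalent} \colon \Catinfty \to \Ani$ is a right adjoint, hence preserves limits, so it carries the pullback square of (2) to a pullback square of anima, which is precisely the square in (3). For (3) $\Rightarrow$ (1): I would unwind what the pullback square of anima says on the level of path components and, more importantly, on fibers. Picking $c, c' \in \Ccal$, the fiber of $\Fun([1],\Ccal)^{\equivalent} \to (\Ccal \cross \Ccal)^{\equivalent}$ over $(c,c')$ is not $\Map_{\Ccal}(c,c')$ but rather its "core"; however, using that $\Fun([1],\Ccal)^{\equivalent} \simeq \Map_{\Ccal}(-,-)$ as a functor out of $\Ccal^{\op} \cross \Ccal$ valued in anima — i.e., the total space of the left fibration classified by the mapping-anima functor — the fiber over $(c,c')$ genuinely is $\Map_{\Ccal}(c,c')$. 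Then the pullback square in (3), restricted to the fiber over a point $(c,c')$ of the bottom-left corner mapping to $(f(c),f(c'))$ in the bottom-right, yields exactly the equivalence $\Map_{\Ccal}(c,c') \simeq \Map_{\Dcal}(f(c),f(c'))$, i.e., full faithfulness of $f$.

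The main obstacle I anticipate is the implication (1) $\Rightarrow$ (2): getting from the fiberwise equivalence to an actual pullback square of \categories requires knowing that the relevant square is suitably "fibered", e.g. that both legs are (co)cartesian fibrations and the top horizontal functor preserves (co)cartesian edges, so that one can invoke the recognition criterion for pullbacks of fibrations. Making this precise — or, alternatively, routing around it through the $\PSh(-)$ characterization and a Yoneda argument to avoid fibration bookkeeping — is where the real content lies; the other two implications are formal. I would likely present the $\PSh(-)$ route, since \Cref{lem:characterization_of_fully_faithful_functors_via_Kan_extension} is already available and makes (1) $\Leftrightarrow$ "$f_*$ fully faithful" cheap.
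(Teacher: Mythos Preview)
Your implications (2)$\Rightarrow$(3) and (3)$\Rightarrow$(1) are exactly the paper's argument: the core preserves pullbacks, and the vertical fibers of the square in (3) over $(c,c')$ and $(f(c),f(c'))$ are precisely the mapping anima, so the pullback condition is full faithfulness on the nose.

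For (1)$\Rightarrow$(2), however, there is a concrete gap. The map $(\source,\target)\colon \Fun([1],\Ccal)\to \Ccal\cross\Ccal$ is \emph{not} a cartesian (or cocartesian) fibration in general---separately $\source$ is cocartesian and $\target$ is cartesian, but the joint map is only a two-sided fibration, so the recognition principle you invoke does not apply as stated. Your alternative $\PSh$ route is also not fleshed out: the pullback $\PSh(\Ccal)\cross_{\PSh(\Dcal)}\Dcal$ (along $f_*$ and Yoneda) is not $\Ccal$, and it is unclear how one would extract the square in (2) from \Cref{lem:characterization_of_fully_faithful_functors_via_Kan_extension} without essentially redoing the fiber computation. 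The paper's argument here is more elementary and avoids all fibration bookkeeping: if $f$ is fully faithful then so is $f^{[1]} = f\circ{-}\colon \Fun([1],\Ccal)\to\Fun([1],\Dcal)$, and $f\cross f$ is fully faithful as well. A commutative square in which both horizontal maps are fully faithful is a pullback precisely when the essential image of the top map equals the preimage of the essential image of the bottom map; here that amounts to saying that an arrow of $\Dcal$ lies in the image of $f^{[1]}$ if and only if its source and target lie in the image of $f$, which is immediate.
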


\begin{proof}
	That (1) is equivalent to (3) follows from inspecting the vertical fibers of the square in (3). 
	Furthermore, if $ f $ is fully faithful, then so is $f^{[1]}$ and thus in (2) both horizontal functors are fully faithful and we can check that it is a pullback square by noting that an arrow is the essential image of $f^{[1]}$ if and only if both its source and target are.
	Finally, (2) implies (3) by observing that the groupoid core functor $(-)^\simeq$ preserves pullbacks. 
\end{proof}


\section{Stability properties of fully faithful functors}\label{sec:stability_properties_of_fully_faithful_functors}

In this section, we prove our main results concerning stability properties of fully faithful functors under specific colimits. 

\begin{proposition}
	Let $ \Ical $ \acategory, $ \Ccal_{\bullet},\Dcal_{\bullet} \colon \fromto{\Ical}{\Catinfty} $ diagrams, and $ f_{\bullet} \colon \fromto{\Ccal_{\bullet}}{\Dcal_{\bullet}} $ a natural transformation.
	Assume that for each $ i \in \Ical $, the functor $ f_i $ is fully faithful.
	Then:
	\begin{enumerate}
		\item The induced functor $ \fromto{\lim_{i \in \Ical} \Ccal_i}{\lim_{i \in \Ical} \Dcal_i} $ is fully faithful.

		\item If $ \Ical $ is filtered, then the induced functor $ \fromto{\colim_{i \in \Ical} \Ccal_i}{\colim_{i \in \Ical} \Dcal_i} $ is fully faithful.
	\end{enumerate}
\end{proposition}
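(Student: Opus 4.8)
The plan is to deduce both parts from \Cref{prop:Segal_anima_characterization_of_full_faithfulness}, in the form of its condition~(2): a functor $ h $ of \categories is fully faithful if and only if the commutative square obtained by applying $ \Fun([1],-) $ and $ (-)\cross(-) $ to $ h $ — with vertical maps $ (\source,\target) $ and horizontal maps $ h\of- $ and $ h\cross h $ — is a pullback square of \categories. This square depends functorially on $ h $, so it suffices to check that each of the functors $ \Fun([1],-)\colon \fromto{\Catinfty}{\Catinfty} $ and $ (-)\cross(-)\colon \fromto{\Catinfty}{\Catinfty} $, together with the property of being a pullback square, is compatible with the relevant (co)limits. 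For limits this is entirely formal; for filtered colimits the content lies in the interaction of $ \Fun([1],-) $, and of finite limits, with filtered colimits of \categories, which I expect to be the one real point of the argument and is exactly where the hypothesis on $ \Ical $ is used in~(2).

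For the first statement: $ \Fun([1],-) $ preserves all limits, being a right adjoint (to $ (-)\cross[1] $), and $ (-)\cross(-) $ preserves all limits since products do. Hence the square associated to $ \fromto{\lim_{i}\Ccal_i}{\lim_{i}\Dcal_i} $ is the limit over $ \Ical $ of the squares associated to the $ f_i $, each of which is a pullback by \Cref{prop:Segal_anima_characterization_of_full_faithfulness}; a limit of pullback squares is a pullback square, so $ \fromto{\lim_i\Ccal_i}{\lim_i\Dcal_i} $ is fully faithful. (Alternatively one argues directly that $ \Map_{\lim_i\Ccal_i}\bigl((x_i),(y_i)\bigr)\equivalent\lim_i\Map_{\Ccal_i}(x_i,y_i) $, again using that $ \Fun([1],-) $, $ (-)^{\equivalent} $ and products all preserve limits.)

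For the second statement: the key input is that filtered colimits commute with finite limits in $ \Catinfty $ — as they do in $ \Ani $, and for $ \Catinfty $ because both filtered colimits and finite limits of \categories are computed levelwise on complete Segal spaces. This at once yields that $ (-)\cross(-) $ preserves filtered colimits and that a filtered colimit of pullback squares of \categories is again a pullback square. The remaining point is that $ \Fun([1],-) $ preserves filtered colimits, which holds because $ [1] $ is a compact object of $ \Catinfty $: naturally in $ \Ccal $ one has $ \Map_{\Catinfty}([n],\Fun([1],\Ccal))\equivalent\Map_{\Catinfty}([n]\cross[1],\Ccal) $, the \categories $ [n]\cross[1] $ are compact, and equivalences in $ \Catinfty $ are detected by the functors $ \Map_{\Catinfty}([n],-) $. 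Granting these, the square associated to $ \fromto{\colim_i\Ccal_i}{\colim_i\Dcal_i} $ is the filtered colimit of the pullback squares associated to the $ f_i $, hence a pullback square, so $ \fromto{\colim_i\Ccal_i}{\colim_i\Dcal_i} $ is fully faithful.
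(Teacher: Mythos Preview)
Your proof is correct and follows the same strategy as the paper: reduce to \Cref{prop:Segal_anima_characterization_of_full_faithfulness} and verify that the functors building the square, together with the property of being a pullback, are compatible with limits and with filtered colimits. The only difference is that the paper uses characterization~(3) (taking groupoid cores, so the square lives in $\Ani$) rather than~(2); this lets it appeal directly to the commutation of filtered colimits with pullbacks in $\Ani$, whereas you route the same fact through $\Catinfty$ via complete Segal anima --- a slightly longer but perfectly valid path.
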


\begin{proof}
	First note that $ [1] $ is a compact object of $ \Catinfty $ and the groupoid core functor $ (-)^{\equivalent} $ preserves filtered colimits.
	Thus the functors $ \fromto{\Catinfty}{\Ani} $ given by
	\begin{equation*}
		\goesto{\Ccal}{\Fun([1],\Ccal)^{\equivalent}} \andeq \goesto{\Ccal}{(\Ccal \cross \Ccal)^{\equivalent}}
	\end{equation*}
	preserve both limits and filtered colimits.
	Since limits commute and filtered colimits of anima commute with pullbacks, both claims follow from item (3) of \Cref{prop:Segal_anima_characterization_of_full_faithfulness}.
\end{proof}

\noindent We now record counterexamples to (2) when $\Ical$ is not filtered.

\begin{example}
    Let $ f \colon X \to Y $ be an arbitrary map of anima. 
    By \kerodon{04QR}, $ f $ can be represented as a colimit over $\Deltaop$ of a cofibration of simplicial sets $ f_{\bullet} \colon X_{\bullet} \to Y_{\bullet} $. 
    Since $ f_{\bullet} $ is a levelwise injection, $ f_{\bullet} $ is levelwise fully faithful. 
    Since $ f $ was arbitrary, this proves that fully faithful maps are not closed under geometric realizations. 
\end{example}

\begin{example}
    Consider the map of spans
    \begin{equation*}
    	\begin{tikzcd}
    		\set{1 < 2} \arrow[d, equals] & \set{1} \arrow[l, hooked'] \arrow[r, hooked] \arrow[d, hooked] & \set{0 < 1 < 2} \arrow[d, equals] \\ 
    		\set{1 < 2} & \set{1 < 2} \arrow[l, hooked'] \arrow[r, hooked] & \set{0 < 1 < 2} \period
    	\end{tikzcd}
    \end{equation*}
    The pushout in $ \Catinfty $ of the top row is the poset with elements $ 0 $, $ 1 $, $ 2 $, and $ 2' $ and generating relations $ 0 < 1 $, $ 1 < 2 $, and $ 1 < 2' $.
    (This can for example be shown using \cref{cor:sieve}.)
    On the other hand, the pushout in $ \Catinfty $ of the bottom row is the poset $ \set{0 < 1 < 2} $.
    In particular, the induced functor on pushouts is not fully faithful.
\end{example}

\begin{theorem}[(fully faithful functors are stable under pushout)]\label{thm:fully_faithful_functors_are_stable_under_pushout}
	Given a pushout square of \categories
	\begin{equation}\label{eq:fully_faithful_functors_are_stable_under_pushout}
        \begin{tikzcd}[column sep=2.5em]
            \Acal \arrow[r, "g"] \arrow[d, "f"', hooked] \arrow[dr, phantom, very near end, "\ulcorner", xshift=0.25em, yshift=-0.25em] & \Ccal \arrow[d, "\fbar"] \\
            \Bcal \arrow[r, "\gbar"'] & \Dcal \comma
        \end{tikzcd}
    \end{equation} 
    if $ f $ is fully faithful, then $ \fbar $ is fully faithful.
\end{theorem}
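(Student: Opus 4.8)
The plan is to apply the presheaf construction to the square, exploit the fact that it turns the pushout into a pullback, and then transport full faithfulness from the bottom edge of the resulting square to the top edge via \Cref{lem:characterization_of_fully_faithful_functors_via_Kan_extension}.

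First we would observe that, since $(-)^{\op}$ is an autoequivalence of $\Catinfty$ and $\Fun(-,\Ani)$ carries colimits of \categories to limits, the assignment $\Ccal \mapsto \PSh(\Ccal)$ equipped with its restriction functoriality $h \mapsto h^*$ sends colimits of \categories to limits; applied to \eqref{eq:fully_faithful_functors_are_stable_under_pushout}, this produces a pullback square of (large) \categories
\[
\begin{tikzcd}[column sep=3em]
\PSh(\Dcal) \arrow[r, "\fbar^*"] \arrow[d, "\gbar^*"'] & \PSh(\Ccal) \arrow[d, "g^*"] \\
\PSh(\Bcal) \arrow[r, "f^*"'] & \PSh(\Acal) \period
\end{tikzcd}
\]
Explicitly, this identifies $\PSh(\Dcal)$ with the category of triples $(G, F, \varphi)$ with $G \in \PSh(\Bcal)$, $F \in \PSh(\Ccal)$, and $\varphi \colon f^* G \equivalent g^* F$, with $\fbar^*$ corresponding to the projection $(G, F, \varphi) \mapsto F$.

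Next, since $f$ is fully faithful, \Cref{lem:characterization_of_fully_faithful_functors_via_Kan_extension} gives that $f_! \colon \fromto{\PSh(\Acal)}{\PSh(\Bcal)}$ is fully faithful, so the unit of $f_! \leftadjoint f^*$ exhibits an equivalence $\id{} \equivalent f^* f_!$. The crux is then to show that $\fbar^*$ admits a fully faithful left adjoint, and we would do this by hand: define $P \colon \fromto{\PSh(\Ccal)}{\PSh(\Dcal)}$ by $F \mapsto (f_! g^* F,\, F,\, \varphi_F)$, where $\varphi_F \colon f^* f_! g^* F \equivalent g^* F$ is induced by $\id{} \equivalent f^* f_!$; this construction makes sense precisely because $f$ is fully faithful. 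A routine computation of mapping anima in the pullback shows $P \leftadjoint \fbar^*$: for $F \in \PSh(\Ccal)$ and $Z = (G, F', \chi) \in \PSh(\Dcal)$, the anima $\Map_{\PSh(\Dcal)}(P(F), Z)$ is the fiber product of $\Map_{\PSh(\Bcal)}(f_! g^* F, G)$ and $\Map_{\PSh(\Ccal)}(F, F')$ over a mapping anima in $\PSh(\Acal)$, and the first leg of this fiber product is an equivalence by the adjunction $f_! \leftadjoint f^*$ together with $f^* f_! \equivalent \id{}$, so the fiber product reduces to $\Map_{\PSh(\Ccal)}(F, F') \equivalent \Map_{\PSh(\Ccal)}(F, \fbar^* Z)$. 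Since $\fbar^* \of P \equivalent \id{}$ (indeed $\fbar^* P(F) \equivalent F$ by construction), \Cref{lem:characterization_of_fully_faithful_adjoints} applied to $P \leftadjoint \fbar^*$ shows that $P$ is fully faithful.

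To finish, by uniqueness of adjoints $P$ agrees with the left Kan extension functor $\fbar_! \colon \fromto{\PSh(\Ccal)}{\PSh(\Dcal)}$; hence $\fbar_!$ is fully faithful, and \Cref{lem:characterization_of_fully_faithful_functors_via_Kan_extension} then yields that $\fbar$ is fully faithful. The main obstacle, such as it is, is this last translation (from a statement about the restriction functor $\fbar^*$, equivalently about its left adjoint, back to a statement about $\fbar$ itself), and \Cref{lem:characterization_of_fully_faithful_functors_via_Kan_extension} is exactly the bridge for that; everything else is formal manipulation of adjunctions and pullbacks. The dual argument, using that $f_*$ is fully faithful to produce a fully faithful \emph{right} adjoint of $\fbar^*$, works just as well.
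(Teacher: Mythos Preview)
Your proposal is correct and follows essentially the same approach as the paper's proof. The only difference is organizational: the paper isolates the key step---that in a pullback square of \categories, if the bottom functor admits a fully faithful left adjoint then so does the top---as a separate \Cref{prop:pullbacks_of_functors_with_fully_faithful_left_adjoints}, whereas you inline that argument (your explicit construction of $P$ and the mapping-anima computation are exactly the content of that proposition's proof).
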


\noindent To prove this, we start by studying properties of pullbacks. 

\begin{proposition}\label{prop:pullbacks_of_functors_with_fully_faithful_left_adjoints}
	Let 
	\begin{equation}\label{eq:pullbacks_of_functors_with_fully_faithful_left_adjoints}
		\begin{tikzcd}[column sep=2.5em]
			\Wcal \arrow[r,"\pbarupperstar"] \arrow[d,"\qbarupperstar"'] \arrow[dr, phantom, very near start, "\lrcorner", xshift=-0.25em, yshift=0.25em] & \Ycal\arrow[d,"\qupperstar"] \\
			\Xcal \arrow[r,"\pupperstar"'] & \Zcal
		\end{tikzcd}
	\end{equation}
	be a pullback square of \categories. 
	If $ \pupperstar \colon \Xcal \to \Zcal $ admits a fully faithful left adjoint, then $ \pbarupperstar \colon \Wcal\to \Ycal$ admits a fully faithful left adjoint. 
\end{proposition}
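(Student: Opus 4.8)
The plan is to produce the left adjoint of $\pbarupperstar$ by hand and then read off full faithfulness from \Cref{lem:characterization_of_fully_faithful_adjoints}. Write $p_{!} \colon \Zcal \to \Xcal$ for the given fully faithful left adjoint of $\pupperstar$; by \Cref{lem:characterization_of_fully_faithful_adjoints} its unit $\eta \colon \id{\Zcal} \to \pupperstar p_{!}$ is an equivalence. Using the identification $\Wcal \equivalent \Xcal \crosslimits_{\Zcal} \Ycal$ and the universal property of this pullback in $\Catinfty$, the functors $p_{!} \of \qupperstar \colon \Ycal \to \Xcal$ and $\id{\Ycal}$, together with the equivalence $\pupperstar p_{!} \qupperstar \equivalent \qupperstar$ induced by $\eta$, assemble into a functor $\overline{p}_{!} \colon \Ycal \to \Wcal$ with $\pbarupperstar \of \overline{p}_{!} \equivalent \id{\Ycal}$ tautologically. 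It therefore suffices to prove that $\overline{p}_{!}$ is left adjoint to $\pbarupperstar$: granting $\overline{p}_{!} \leftadjoint \pbarupperstar$, \Cref{lem:characterization_of_fully_faithful_adjoints} shows $\overline{p}_{!}$ is fully faithful, as desired.

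To verify the adjunction I would use the standard mapping-anima criterion. Since $\pbarupperstar \overline{p}_{!} \equivalent \id{\Ycal}$, it is enough to check that for all $y \in \Ycal$ and $w \in \Wcal$ the map $\Map_{\Wcal}(\overline{p}_{!} y, w) \too \Map_{\Ycal}(y, \pbarupperstar w)$ induced by $\pbarupperstar$ is an equivalence. As $\Fun([1], -)$ preserves pullbacks, mapping anima in $\Wcal \equivalent \Xcal \crosslimits_{\Zcal} \Ycal$ are computed as pullbacks of mapping anima; writing $\qbarupperstar w \in \Xcal$ for the $\Xcal$-component of $w$, the map in question is the left-hand vertical of the pullback square
\begin{equation*}
	\begin{tikzcd}[column sep = large]
		\Map_{\Wcal}(\overline{p}_{!} y, w) \arrow[r] \arrow[d] \arrow[dr, phantom, very near start, "\lrcorner", xshift=-0.25em, yshift=0.25em] & \Map_{\Xcal}(p_{!}\qupperstar y, \qbarupperstar w) \arrow[d, "\pupperstar"] \\
		\Map_{\Ycal}(y, \pbarupperstar w) \arrow[r] & \Map_{\Zcal}(\pupperstar p_{!}\qupperstar y, \pupperstar\qbarupperstar w) \period
	\end{tikzcd}
\end{equation*}
Since a pullback of an equivalence is an equivalence, it is enough that the right-hand vertical $\pupperstar$ be an equivalence; and under the adjunction equivalence $\Map_{\Xcal}(p_{!}\qupperstar y, \qbarupperstar w) \equivalent \Map_{\Zcal}(\qupperstar y, \pupperstar \qbarupperstar w)$ this $\pupperstar$ corresponds to precomposition with the unit $\eta_{\qupperstar y} \colon \qupperstar y \to \pupperstar p_{!} \qupperstar y$, which is an equivalence by \Cref{lem:characterization_of_fully_faithful_adjoints}.

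I expect the only real work to be bookkeeping. One has to confirm that the equivalence $\pbarupperstar \overline{p}_{!} \equivalent \id{\Ycal}$ fed into the criterion is the tautological one coming from the second component of $\overline{p}_{!}$, so that the comparison map genuinely is the left-hand vertical above, and one has to track how the identification of mapping anima in the pullback transports $\pupperstar$. There is no conceptual obstacle; note that full faithfulness of $p_{!}$ is used in an essential way twice — once to write down $\overline{p}_{!}$ at all (this needs $\pupperstar p_{!} \equivalent \id{\Zcal}$, so that $p_{!}$ is a section of $\pupperstar$), and once to see that the comparison map $\pupperstar$ in the square above is invertible — and for a general left adjoint neither step is available.
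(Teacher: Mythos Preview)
Your proof is correct and follows essentially the same route as the paper's: both construct $\overline{p}_{!}$ from the pair $(p_{!}\qupperstar,\id{\Ycal})$ via the universal property of the pullback, then verify the adjunction by expressing $\Map_{\Wcal}(\overline{p}_{!}y,w)$ as a pullback of mapping anima and reducing to the fact that $\pupperstar$ applied to $\Map_{\Xcal}(p_{!}\qupperstar y,\qbarupperstar w)$ is an equivalence because the unit $\eta$ is. The only cosmetic difference is that the paper writes the $\Ycal$-corner of the pullback as $\Map_{\Ycal}(\pbarupperstar\overline{p}_{!}y,\pbarupperstar w)$ and separately argues that $-\circ\delta$ is an equivalence, whereas you absorb this identification into the square from the start; your closing paragraph correctly flags that this is the bookkeeping one must track.
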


\begin{proof}
	Let $ \plowershriek $ denote the fully faithful left adjoint to $ \pupperstar $, and $\eta \colon \equivto{\id{\Zcal}}{\pupperstar \plowershriek}$ the unit equivalence. 
	By the universal property of the pullback, the functors $\id{\Ycal} \colon \Ycal\to \Ycal $ and $ \plowershriek \qupperstar \colon \Ycal\to \Xcal$, together with the equivalence
	\begin{equation*}
		\begin{tikzcd}
			\qupperstar \circ \id{\Ycal} = \id{\Zcal}\circ \qupperstar \arrow[r, "\eta \qupperstar", "\sim"'{yshift=0.25ex}] & \pupperstar\plowershriek \qupperstar 
		\end{tikzcd}
	\end{equation*}
	induce a functor $ \pbarlowershriek \colon \Ycal\to\Wcal $.

	We claim that $ \pbarlowershriek $ is a fully faithful left adjoint to $ \pbarupperstar$. 
	More precisely, we show that the natural equivalence $ \delta \colon \equivto{\id{\Ycal}}{\pbarupperstar \pbarlowershriek} $ that we get from the definition of $\pbarlowershriek$ is a unit transformation, from which the full faithfulness follows at once. 
	We need to show that given $y\in \Ycal $ and $w\in\Wcal$, the composition
	\begin{equation*}
		\begin{tikzcd}
			\Map_{\Wcal}(\pbarlowershriek(y), w) \arrow[r] & \Map_{\Ycal}(\pbarupperstar\pbarlowershriek(y), \pbarupperstar(w)) \arrow[r, "-\of\delta"] & \Map_{\Ycal}(y,\pbarupperstar(w)) 
		\end{tikzcd}
	\end{equation*} 
	is an equivalence.
	Because the second map in this composite is an equivalence by definition, it suffices to prove that the first map in the composite is an equivalence.

	Since \eqref{eq:pullbacks_of_functors_with_fully_faithful_left_adjoints} is a pullback square, the projections induce an equivalence
	\begin{equation*}
		\Map_{\Wcal}(\pbarlowershriek(y),w) \equivalence \Map_{\Xcal}(\qbarupperstar\pbarlowershriek(y),\qbarupperstar(w)) \crosslimits_{\Map_{\Zcal}(\pupperstar\qbarupperstar\pbarlowershriek(y),\pupperstar\qbarupperstar(w))} \Map_{\Ycal}(\pbarupperstar\pbarlowershriek(y),\pbarupperstar(w))
	\end{equation*} 
	Therefore, it suffices to show that the projection
	\begin{equation*}
		\Map_{\Xcal}(\qbarupperstar\pbarlowershriek(y),\qbarupperstar(w))\to \Map_{\Zcal}(\pupperstar\qbarupperstar\pbarlowershriek(y),\pupperstar\qbarupperstar(w))
	\end{equation*}
	is an equivalence. 
	Under the canonical equivalence $ \qbarupperstar \pbarlowershriek \simeq \plowershriek \qupperstar  $, this projection is identified with the projection
	\begin{equation*}
		\Map_{\Xcal}(\plowershriek \qupperstar(y), \qbarupperstar(w))\to \Map_{\Zcal}(\pupperstar\plowershriek \qupperstar(y), \pupperstar\qbarupperstar(w)) \period
	\end{equation*} 
	Composing this map with precomposition along $\eta_{\qupperstar(y)}$, we get an equivalence, by the definition of an adjunction. 
	Since $\eta_{\qupperstar(y)}$ is an equivalence, the projection is also an equivalence, as desired.
\end{proof}

\begin{remark}
    See \cite[Lemma 6.3.9]{arXiv:2111.14495} for a generalization of \Cref{prop:pullbacks_of_functors_with_fully_faithful_left_adjoints} to internal higher categories.
\end{remark}

Note that in \Cref{prop:pullbacks_of_functors_with_fully_faithful_left_adjoints}, we obtain an equivalence $ \plowershriek \qupperstar \equivalent \qbarupperstar \pbarlowershriek $ by design. 
Given a commutative square as in \eqref{eq:pullbacks_of_functors_with_fully_faithful_left_adjoints} where $ \pupperstar $ and $ \pbarupperstar $ have left adjoints $ \plowershriek $ and $ \pbarlowershriek $, there is also a canonical map
\begin{equation*}
	\fromto{\plowershriek \qupperstar}{\qbarupperstar \pbarlowershriek} \comma
\end{equation*}
called the \textit{Beck--Chevalley map} or \textit{exchange transformation}.
See \cites[\HAthm{Definition}{4.7.4.13}]{HA}{MR255631}{MR4655344}.
We point out to the reader that in the situation where $ \plowershriek $ is fully faithful (but where the square is not assumed to be a pullback), the existence of \emph{an} equivalence $ \plowershriek \qupperstar \equivalent \qbarupperstar \pbarlowershriek $ implies that specifically the Beck--Chevalley map is an equivalence. 
This digression is not needed in the proof of \Cref{thm:fully_faithful_functors_are_stable_under_pushout}, but is good to know. 

\begin{lemma}\label{lem:adjointability_for_fully_faithful_adjoints}
	Let
	\begin{equation*}
		\begin{tikzcd}[column sep=2.5em]
			\Wcal \arrow[r,"\pbarupperstar"] \arrow[d,"\qbarupperstar"'] \arrow[dr, phantom, very near start, "\lrcorner", xshift=-0.25em, yshift=0.25em] & \Ycal\arrow[d,"\qupperstar"] \\
			\Xcal \arrow[r,"\pupperstar"'] & \Zcal
		\end{tikzcd}
	\end{equation*}
	be a commutative square where $ \pupperstar $ and $ \pbarupperstar $ admit fully faithful left adjoints $ \plowershriek $ and $ \pbarlowershriek $, respectively. 
	If the functor $\qbarupperstar \pbarlowershriek$ lands in the essential image of $\plowershriek$ (e.g., there exists an equivalence $ \plowershriek \qupperstar \equivalent \qbarupperstar \pbarlowershriek $), then the Beck--Chevalley map $ \fromto{\plowershriek \qupperstar}{\qbarupperstar \pbarlowershriek} $ is an equivalence. 
\end{lemma}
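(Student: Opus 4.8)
The plan is to reduce the assertion to checking that $\pupperstar$ applied to the Beck--Chevalley map is an equivalence; this is the only place where full faithfulness of $\plowershriek$ is used, via \Cref{lem:characterization_of_fully_faithful_adjoints}, which guarantees that the unit $u \colon \equivto{\id{\Zcal}}{\pupperstar \plowershriek}$ is an equivalence. Write $\eta \colon \fromto{\id{\Ycal}}{\pbarupperstar \pbarlowershriek}$ for the unit of $\pbarlowershriek \leftadjoint \pbarupperstar$ and $\epsilon \colon \fromto{\plowershriek \pupperstar}{\id{\Xcal}}$ for the counit of $\plowershriek \leftadjoint \pupperstar$, and recall that the Beck--Chevalley map $\theta \colon \fromto{\plowershriek \qupperstar}{\qbarupperstar \pbarlowershriek}$ is the composite
\begin{equation*}
	\begin{tikzcd}[column sep=1.2em]
		\plowershriek \qupperstar \arrow[r, "\plowershriek \qupperstar \eta"] & \plowershriek \qupperstar \pbarupperstar \pbarlowershriek \arrow[r, "\sim"] & \plowershriek \pupperstar \qbarupperstar \pbarlowershriek \arrow[r, "\epsilon \qbarupperstar \pbarlowershriek"] & \qbarupperstar \pbarlowershriek \comma
	\end{tikzcd}
\end{equation*}
in which the middle equivalence comes from the commutativity of the square, which supplies an equivalence $\qupperstar \pbarupperstar \equivalent \pupperstar \qbarupperstar$.

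First I would observe that both functors $\plowershriek \qupperstar$ and $\qbarupperstar \pbarlowershriek$ take values in the essential image of $\plowershriek$: tautologically for the first, and by hypothesis for the second. By \Cref{lem:characterization_of_fully_faithful_adjoints}, $\plowershriek$ restricts to an equivalence from $\Zcal$ onto its essential image with inverse $\pupperstar$; hence a morphism of $\Xcal$ between two objects of this essential image is an equivalence if and only if its image under $\pupperstar$ is. Since equivalences of functors are detected objectwise, it therefore suffices to prove that $\pupperstar \theta$ is an equivalence.

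Next I would compute $\pupperstar \theta$ by applying $\pupperstar$ to the displayed composite and precomposing with the equivalence $u \qupperstar \colon \equivto{\qupperstar}{\pupperstar \plowershriek \qupperstar}$. Naturality of $u$ moves it past each occurrence of $\plowershriek(-)$, and the triangle identity $(\pupperstar \epsilon) \of (u \pupperstar) = \id{\pupperstar}$, together with the fact that $u$ is an equivalence, makes the contribution of $\epsilon$ cancel. What survives is the composite
\begin{equation*}
	\begin{tikzcd}[column sep=1.6em]
		\qupperstar \arrow[r, "\qupperstar \eta"] & \qupperstar \pbarupperstar \pbarlowershriek \arrow[r, "\sim"] & \pupperstar \qbarupperstar \pbarlowershriek \comma
	\end{tikzcd}
\end{equation*}
whose second arrow is the equivalence from the commuting square and whose first arrow $\qupperstar \eta$ is an equivalence because $\eta$ is one --- here we use that $\pbarlowershriek$ is fully faithful and invoke \Cref{lem:characterization_of_fully_faithful_adjoints} once more. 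Hence $\pupperstar \theta$ is an equivalence, and by the reduction of the previous paragraph so is $\theta$. Note that the argument only uses that $\qbarupperstar \pbarlowershriek$ lands in the essential image of $\plowershriek$, exactly as in the statement.

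I expect the only delicate point to be the middle step: carefully chasing $\theta$ through $\pupperstar$ and confirming that $u$ and $\epsilon$ cancel in precisely the way claimed, i.e.\ keeping track of all the coherence $2$-cells produced by the various naturality squares. Everything else --- the reduction to the essential image of $\plowershriek$ and the two applications of \Cref{lem:characterization_of_fully_faithful_adjoints} --- is purely formal.
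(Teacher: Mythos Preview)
Your proof is correct and follows essentially the same strategy as the paper's: both reduce to checking the map after applying $\pupperstar$ (using that $\qbarupperstar \pbarlowershriek$ lands in the essential image of $\plowershriek$), and then conclude using that the unit $\eta$ of $\pbarlowershriek \leftadjoint \pbarupperstar$ is an equivalence. The only difference is presentational: the paper phrases the reduction via mapping into an arbitrary object $x \in \Xcal$ (Yoneda), whereas you carry out the same cancellation directly at the level of natural transformations using the triangle identity; your chase of $u$ past the composite and the resulting cancellation with $\epsilon$ is exactly what the paper's line ``corresponds, by definition, to the following composite'' encodes.
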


\begin{proof}
    Fix $x\in\Xcal $ and $ y\in\Ycal$. 
    Mapping into $x$, the Beck--Chevalley map at $y$ corresponds, by definition, to the following composite
    \begin{align*}
    	\Map(\qbarupperstar \pbarlowershriek(y),x) &\to \Map(\pupperstar \qbarupperstar \pbarlowershriek(y),\pupperstar(x)) \\ 
    	&\simeq \Map(\qupperstar \pbarupperstar \pbarlowershriek(y),\pupperstar(x)) \\ 
    	&\to \Map(\qupperstar(y),\pupperstar(x)) \\
    	&\simeq \Map(\plowershriek \qupperstar(y),x) \period
    \end{align*}
	Since $\qbarupperstar \pbarlowershriek$ lands in the essential image of $\plowershriek$ by assumption, the first map in this composition is an equivalence. 
	Since $ \pbarlowershriek $ is fully faithful, the unit map $\id{\Ycal}\to \pbarupperstar \pbarlowershriek$ is an equivalence; hence the final map in the composition is also an equivalence. 
	Hence for any $ x\in\Xcal $, the whole composition is an equivalence, from which it follows that the Beck--Chevalley map is an equivalence. 
\end{proof}

\noindent We can summarize \Cref{prop:pullbacks_of_functors_with_fully_faithful_left_adjoints} as saying that, under the hypotheses, $ \pbarupperstar $ admits a fully faithful left adjoint, and that the square is horizontally left adjointable. 

We can now prove that fully faithful functors are stable under pushout:

\begin{proof}[Proof of \Cref{thm:fully_faithful_functors_are_stable_under_pushout}]
	Applying $\PSh(-)$ to the pushout square \eqref{eq:fully_faithful_functors_are_stable_under_pushout} yields a pullback square
	\begin{equation*}
		\begin{tikzcd}[column sep=2.5em]
			\PSh(\Dcal) \arrow[r, "\fbarupperstar"] \arrow[d, "\gbarupperstar"'] \arrow[dr, phantom, very near start, "\lrcorner", xshift=-0.25em, yshift=0.25em] & \PSh(\Ccal) \arrow[d,"\gupperstar"] \\
			\PSh(\Bcal) \arrow[r,"\fupperstar"'] & \PSh(\Acal)
		\end{tikzcd}
    \end{equation*} 
	of presheaf \categories and restriction functors.
	By \Cref{lem:characterization_of_fully_faithful_functors_via_Kan_extension}, it suffices to prove that the left adjoint of the restriction functor $ \fbarupperstar \colon \PSh(\Dcal)\to \PSh(\Ccal)$ is fully faithful.
	Equivalently, by uniqueness of adjoints, we need to show that $ \fbarupperstar $ admits a fully faithful left adjoint.

	Since $ f $ is fully faithful, \Cref{lem:characterization_of_fully_faithful_functors_via_Kan_extension} shows that $ \fupperstar \colon \PSh(\Bcal)\to \PSh(\Acal) $ admits a fully faithful left adjoint.
	\Cref{prop:pullbacks_of_functors_with_fully_faithful_left_adjoints} shows that $ \fbarupperstar $ also admits a fully faithful left adjoint, as desired.
\end{proof}

\begin{corollary}\label{cor:adjbl}
  	Let
	\begin{equation*}
        \begin{tikzcd}[column sep=2.5em]
            \Acal \arrow[r, "g"] \arrow[d, "f"', hooked] \arrow[dr, phantom, very near end, "\ulcorner", xshift=0.25em, yshift=-0.25em] & \Ccal \arrow[d, "\fbar", hooked] \\
            \Bcal \arrow[r, "\gbar"'] & \Dcal \comma
        \end{tikzcd}
    \end{equation*} 
    be pushout square of \categories where $ f $ is fully faithful.
    Then the induced square of presheaf \categories 
    \begin{equation*}
		\begin{tikzcd}[column sep=2.5em]
			\PSh(\Dcal) \arrow[r, "\fbarupperstar"] \arrow[d, "\gbarupperstar"'] \arrow[dr, phantom, very near start, "\lrcorner", xshift=-0.25em, yshift=0.25em] & \PSh(\Ccal) \arrow[d,"\gupperstar"] \\
			\PSh(\Bcal) \arrow[r,"\fupperstar"'] & \PSh(\Acal)
		\end{tikzcd}
    \end{equation*} 
    is horizontally left adjointable; that is, the Beck--Chevalley map $ \fromto{f_!g^*}{\gbar^*\fbar_!} $ is an equivalence. 
\end{corollary}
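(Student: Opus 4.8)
The plan is to assemble results already at hand. Exactly as in the proof of \Cref{thm:fully_faithful_functors_are_stable_under_pushout}, applying $\PSh(-)$ to the pushout square produces the displayed pullback square of presheaf \categories and restriction functors. Since $f$ is fully faithful, \Cref{lem:characterization_of_fully_faithful_functors_via_Kan_extension} equips $f^{*}\colon \PSh(\Bcal)\to\PSh(\Acal)$ with a fully faithful left adjoint $f_{!}$; and since $\fbar$ is fully faithful by \Cref{thm:fully_faithful_functors_are_stable_under_pushout}, the same lemma equips $\fbar^{*}\colon \PSh(\Dcal)\to\PSh(\Ccal)$ with a fully faithful left adjoint $\fbar_{!}$. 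Thus both horizontal functors of the presheaf square admit fully faithful left adjoints.

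Next I would recall precisely what \Cref{prop:pullbacks_of_functors_with_fully_faithful_left_adjoints} produces: there, the fully faithful left adjoint of the top horizontal functor is built from the universal property of the pullback together with the unit equivalence of the fully faithful left adjoint of the bottom horizontal functor, and by that very construction it comes equipped with an equivalence $f_{!}g^{*}\simeq\gbar^{*}\fbar_{!}$. In particular, $\gbar^{*}\fbar_{!}$ lands in the essential image of $f_{!}$.

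Finally, I would apply \Cref{lem:adjointability_for_fully_faithful_adjoints} to the presheaf square, whose bottom and top horizontal functors $f^{*}$ and $\fbar^{*}$ carry the fully faithful left adjoints $f_{!}$ and $\fbar_{!}$: the hypothesis that $\gbar^{*}\fbar_{!}$ lies in the essential image of $f_{!}$ holds by the previous step, so the lemma yields that the Beck--Chevalley map $\fromto{f_{!}g^{*}}{\gbar^{*}\fbar_{!}}$ is an equivalence. This is exactly the claimed horizontal left adjointability.

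I do not expect a genuine obstacle here: the statement is a formal consequence of results already proven. The one point worth stressing is conceptual, namely that \Cref{prop:pullbacks_of_functors_with_fully_faithful_left_adjoints} yields only \emph{some} equivalence $f_{!}g^{*}\simeq\gbar^{*}\fbar_{!}$ rather than a priori the Beck--Chevalley map, which is precisely why the argument must route through \Cref{lem:adjointability_for_fully_faithful_adjoints} instead of quoting \Cref{prop:pullbacks_of_functors_with_fully_faithful_left_adjoints} directly.
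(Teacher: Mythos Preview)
Your proposal is correct and follows essentially the same route as the paper, which simply says to combine \Cref{thm:fully_faithful_functors_are_stable_under_pushout} and \Cref{lem:adjointability_for_fully_faithful_adjoints}; you have merely unpacked how that combination goes. Your closing remark about why one must pass through \Cref{lem:adjointability_for_fully_faithful_adjoints} rather than quote \Cref{prop:pullbacks_of_functors_with_fully_faithful_left_adjoints} directly is exactly the point the paper makes in the paragraph preceding \Cref{lem:adjointability_for_fully_faithful_adjoints}.
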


\begin{proof}
	Combine \Cref{thm:fully_faithful_functors_are_stable_under_pushout,lem:adjointability_for_fully_faithful_adjoints}.
\end{proof}

We conclude by noting that the fact that fully faithful functors are stable under pushout immediate implies some more stability properties:

\begin{proposition}\label{prop:properties_of_a_class_of_morphisms_stable_under_pushout}
	Let $ \Ccal $ be \acategory with pushouts, and let $ \Pcal $ be a collection of morphisms in $ \Ccal $ that contains all equivalences and is stable under composition and pushout.
	Let 
	\begin{equation*}
 		\begin{tikzcd}[sep=3em]
			X_0 \arrow[d, "x"'] & W_0 \arrow[d, "w" description] \arrow[r] \arrow[l] & Y_0 \arrow[d, "y"] \\
			X_1 & W_1 \arrow[l] \arrow[r] & Y_1
		\end{tikzcd}
 	\end{equation*}
 	be a commutative diagram in $ \Ccal $.
 	Assume one of the following:
	\begin{enumerate}
		\item The left-hand square is a pushout and $ y \in \Pcal $.

	    \item The morphisms $ x $, $ y $, and the codiagonal $ \nabla_w \colon \fromto{W_1 \coproduct^{W_0} W_1}{W_1} $ are in $ \Pcal $.
	\end{enumerate}
	Then the induced morphism
	\begin{equation*}
    	\fromto{X_0 \coproductlimits^{W_0} Y_0}{X_1 \coproductlimits^{W_1} Y_1}
    \end{equation*}
    is in $ \Pcal $.
\end{proposition}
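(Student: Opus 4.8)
The plan is to treat the two cases separately, in each case exhibiting the morphism $\fromto{X_0 \coproduct^{W_0} Y_0}{X_1 \coproduct^{W_1} Y_1}$ as a composite of cobase changes of morphisms already known to lie in $\Pcal$; the only tools needed are the pasting law for pushout squares together with the assumed closure of $\Pcal$ under equivalences, composition, and pushout. Throughout one uses the commutativity of the given diagram to identify the two a priori different structure maps $\fromto{W_0}{X_1}$, namely $W_0 \too X_0 \too X_1$ and $W_0 \too W_1 \too X_1$, and likewise the two structure maps $\fromto{W_0}{Y_1}$.

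In case (1) I would apply the pasting law twice. Pasting the left-hand pushout square onto the defining pushout square of $X_1 \coproduct^{W_1} Y_1$ yields a canonical equivalence $X_1 \coproduct^{W_1} Y_1 \equivalent X_0 \coproduct^{W_0} Y_1$, with structure map $W_0 \too W_1 \too Y_1 = W_0 \too Y_0 \too Y_1$. On the other side, pasting the defining pushout square of $X_0 \coproduct^{W_0} Y_0$ onto the cobase change of $y$ along the coprojection $\fromto{Y_0}{X_0 \coproduct^{W_0} Y_0}$ identifies $(X_0 \coproduct^{W_0} Y_0) \coproduct^{Y_0} Y_1 \equivalent X_0 \coproduct^{W_0} Y_1$. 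Chasing the coprojections through these two computations shows that, modulo the resulting equivalences, the comparison map $\fromto{X_0 \coproduct^{W_0} Y_0}{X_1 \coproduct^{W_1} Y_1}$ is exactly the cobase change of $y$ along $\fromto{Y_0}{X_0 \coproduct^{W_0} Y_0}$; since $y \in \Pcal$ and $\Pcal$ contains all equivalences and is closed under composition and pushout, we conclude.

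In case (2) I would factor the comparison map as $X_0 \coproduct^{W_0} Y_0 \too X_1 \coproduct^{W_0} Y_1 \too X_1 \coproduct^{W_1} Y_1$, where the first map is induced by $x$ and $y$ (and the identity of $W_0$) and the second replaces $W_0$ by $W_1$ along $w$. The first map factors once more through $X_1 \coproduct^{W_0} Y_0$, and by the pasting law its two pieces are the cobase change of $x$ along $\fromto{X_0}{X_0 \coproduct^{W_0} Y_0}$ and the cobase change of $y$ along $\fromto{Y_0}{X_1 \coproduct^{W_0} Y_0}$; as $x,y \in \Pcal$, the first map lies in $\Pcal$. For the second map, observe that inside $X_1 \coproduct^{W_0} Y_1$ the two composites $W_1 \too X_1 \too X_1 \coproduct^{W_0} Y_1$ and $W_1 \too Y_1 \too X_1 \coproduct^{W_0} Y_1$ become equal after precomposition with $w$, and so assemble into a single map $\fromto{W_1 \coproduct^{W_0} W_1}{X_1 \coproduct^{W_0} Y_1}$; unwinding the universal property of $X_1 \coproduct^{W_0} Y_1$ shows that $X_1 \coproduct^{W_1} Y_1$ is the pushout of $\nabla_w$ along this map, and that the comparison $\fromto{X_1 \coproduct^{W_0} Y_1}{X_1 \coproduct^{W_1} Y_1}$ is precisely this pushout. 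Since $\nabla_w \in \Pcal$, the second map lies in $\Pcal$, and hence so does the composite.

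The bookkeeping with the pasting law is entirely routine. The one genuinely non-formal point — and the step I expect to demand the most care — is the claim in case (2) that the comparison $\fromto{X_1 \coproduct^{W_0} Y_1}{X_1 \coproduct^{W_1} Y_1}$ is a cobase change of $\nabla_w$: one must verify, via the universal property, that passing from the pushout taken over $W_0$ to the pushout taken over $W_1$ amounts to adjoining exactly the relation identifying the two images of $W_1$, i.e.\ to a cobase change along the fold map $\nabla_w$. Once this identification is established, both cases follow formally from the closure properties of $\Pcal$.
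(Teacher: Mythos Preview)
Your proposal is correct and follows essentially the same route as the paper's proof. In case~(1) the paper packages your two pasting arguments into a single commutative cube and invokes the gluing lemma, but this is the same content; in case~(2) your factorization through $X_1 \coproduct^{W_0} Y_1$ and your identification of the second step as a cobase change of $\nabla_w$ match the paper exactly, including the observation that this last identification is the only genuinely non-formal step.
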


\begin{proof}[Proof of \Cref{prop:properties_of_a_class_of_morphisms_stable_under_pushout}]
	For (1), for $ i = 0,1 $, let $ P_i $ denote the pushout $ X_i \coproduct^{W_i} Y_i $.
	Consider the commutative cube
	\begin{equation*}
      \begin{tikzcd}[column sep={8ex,between origins}, row sep={8ex,between origins}]
            W_0  \arrow[rr] \arrow[dd]  \arrow[dr, "w" description] & & Y_0 \arrow[dd]  \arrow[dr, "y"] \\
            & W_1 \arrow[rr, crossing over] & & Y_1 \arrow[dd]  \\
            X_0 \arrow[rr] \arrow[dr, "x"'] & & P_0 \arrow[dr] \\
            & X_1 \arrow[rr] \arrow[from=uu, crossing over] & & P_1 \period
      \end{tikzcd}
    \end{equation*}
    By definition, the front and back vertical faces are pushout squares.
    By assumption, the left-hand vertical face is also a pushout square.
    By the gluing lemma for pushouts, the right-hand vertical face is also a pushout.
    Since $ y \in \Pcal $ and the class $ \Pcal $ is stable under pushout, we deduce that the induced map on pushouts $ \fromto{P_0}{P_1} $ is also in $ \Pcal $.

    Now we prove (2). 
    First note that the natural morphism $ \fromto{X_0 \coproduct^{W_0} Y_0}{X_1 \coproduct^{W_1} Y_1} $ factors as a composite 
    \begin{equation*}
    	\begin{tikzcd}
    		X_0 \coproductlimits^{W_0} Y_0 \arrow[r] & X_1 \coproductlimits^{W_0} Y_1 \arrow[r] & X_1 \coproductlimits^{W_1} Y_1 \period
    	\end{tikzcd}
    \end{equation*}
    Since $ \Pcal $ is stable under composition, it suffices to prove that each of these morphisms is in $ \Pcal $.
    (That is, we need to show the claim in the special cases when $ w $ is the identity, and when $ x $ and $ y $ are identities.)
    
   	For the left-hand morphism, note that the morphism $ \fromto{X_0 \coproductlimits^{W_0} Y_0}{X_1 \coproductlimits^{W_0} Y_1} $ factors as a composite
   	\begin{equation*}
    	\begin{tikzcd}[sep=4.5em]
    		X_0 \coproductlimits^{W_0} Y_0 \arrow[r, "x \coproduct \id{Y_0}"] & X_1 \coproductlimits^{W_0} Y_0 \arrow[r, "\id{X_1} \coproduct y"] & X_1 \coproductlimits^{W_0} Y_1 \period
    	\end{tikzcd}
    \end{equation*}
    Since $ x,y \in \Pcal $ and $ \Pcal $ is stable under pushout, both of the above morphisms are in $ \Pcal $.

   	That the right-hand morphism is in $ \Pcal $ follows from the fact that the natural square
   	\begin{equation*}
   		\begin{tikzcd}
   			W_1 \coproductlimits^{W_0} W_1 \arrow[r, "\nabla_w"] \arrow[d] & W_1 \arrow[d] \\ 
   			X_1 \coproductlimits^{W_0} Y_1 \arrow[r] & X_1 \coproductlimits^{W_1} Y_1
   		\end{tikzcd}
   	\end{equation*}
   	is a pushout (a proof of which we'll see in \cref{eqn:fold-pushout}), the assumption that $ \nabla_w \in \Pcal $, and the assumption that $ \Pcal $ is stable under pushout.
\end{proof}

\begin{example}
	In light of \Cref{thm:fully_faithful_functors_are_stable_under_pushout}, \Cref{prop:properties_of_a_class_of_morphisms_stable_under_pushout} applies when $ \Ccal = \Catinfty $ and $ \Pcal $ is the class of fully faithful functors.
\end{example}


\section{Mapping anima in pushouts}\label{sec:mapping_anima_in_pushouts}

In this section, we use \Cref{thm:fully_faithful_functors_are_stable_under_pushout} and its proof to compute most of the mapping anima in a pushout of \categories where one of the legs is fully faithful. Given a pushout square:
\begin{equation*}
    \begin{tikzcd}[column sep=2.5em]
        \Acal \arrow[r, "g"] \arrow[d, "f"', hooked] \arrow[dr, phantom, very near end, "\ulcorner", xshift=0.25em, yshift=-0.25em] & \Ccal \arrow[d, "\fbar", hooked] \\
        \Bcal \arrow[r, "\gbar"'] & \Dcal \comma
    \end{tikzcd}
\end{equation*}  
where $ f $ is fully faithful, there are four ``types'' of mapping anima to compute: those from objects in the image of $\Bcal$ to objects in the image of $\Ccal$,  those from objects in the image of $\Ccal$ to objects in the image of $\Bcal$, those from objects in the image of $\Bcal$ to themselves, and finally those from objects in the image of $\Ccal$ to themselves. 

\Cref{thm:fully_faithful_functors_are_stable_under_pushout} concerns the last type, and all the ``mixed'' types can be computed from its proof. 
Finally, the mapping anima between objects in the image of $\Bcal$ are more difficult to access, and indeed, the formula we obtain for them is more complex. We address them last, as the relevant techniques are different. 

Let us, however, point out that in \cref{section:sieves}, we outline a simpler computation of this last type in the case where the inclusion $f \colon \incto{\Acal}{\Bcal} $ is a sieve. 


\subsection{The first three types of mapping anima}

The first main result is: 
\begin{corollary}\label{cor:map3types}
	Let
	\begin{equation*}
	    \begin{tikzcd}[column sep=2.5em]
	        \Acal \arrow[r, "g"] \arrow[d, "f"'] \arrow[dr, phantom, very near end, "\ulcorner", xshift=0.25em, yshift=-0.25em] & \Ccal \arrow[d, "\fbar"] \\
	        \Bcal \arrow[r, "\gbar"'] & \Dcal \comma
	    \end{tikzcd}
	\end{equation*} 
	be a pushout square of \categories where $ f $ is fully faithful.
	For all $b\in \Bcal $ and $ c,c'\in \Ccal$, we have natural equivalences: 
	\begin{enumerate}
	    \item $ \Map_{\Dcal}(\fbar(c),\fbar(c')) \simeq \Map_{\Ccal}(c,c') $.

	    \item $\Map_{\Dcal}(\gbar(b),\fbar(c)) \simeq |\Bcal_{b/}\crosslimits_{\Bcal} \Acal \crosslimits_{\Ccal} \Ccal_{/c}| $. 

	    \item $\Map_{\Dcal}(\fbar(c),\gbar(b)) \simeq |\Ccal_{c/}\crosslimits_{\Ccal} \Acal \crosslimits_{\Bcal} \Bcal_{/b}|$.
	\end{enumerate}
\end{corollary}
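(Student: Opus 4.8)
We will treat the three equivalences separately. Part~(1) is immediate from \Cref{thm:fully_faithful_functors_are_stable_under_pushout}: that theorem asserts that $\fbar$ is fully faithful, i.e.\ that $\fromto{\Map_{\Ccal}(c,c')}{\Map_{\Dcal}(\fbar(c),\fbar(c'))}$ is an equivalence for all $c,c'$, and this map is natural by construction. For parts~(2) and~(3) the plan is to transport the computation to presheaf \categories: applying $\PSh(-)$ to the pushout square produces the pullback square of restriction functors appearing in the proof of \Cref{thm:fully_faithful_functors_are_stable_under_pushout}, and \Cref{cor:adjbl} supplies the Beck--Chevalley equivalence $f_!g^*\simeq\gbar^*\fbar_!$.

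For part~(2) I would argue as follows. Using the Yoneda embedding together with $\gbar_!\yo_{\Bcal}(b)\simeq\yo_{\Dcal}(\gbar(b))$ and $\fbar_!\yo_{\Ccal}(c)\simeq\yo_{\Dcal}(\fbar(c))$ (by \HTT{Proposition}{5.2.6.3}), then the adjunction $\gbar_!\leftadjoint\gbar^*$, then the Yoneda lemma, and finally \Cref{cor:adjbl}, one obtains
\begin{align*}
	\Map_{\Dcal}(\gbar(b),\fbar(c))
	&\simeq \Map_{\PSh(\Dcal)}(\gbar_!\yo_{\Bcal}(b),\fbar_!\yo_{\Ccal}(c)) \\
	&\simeq \Map_{\PSh(\Bcal)}(\yo_{\Bcal}(b),\gbar^*\fbar_!\yo_{\Ccal}(c)) \\
	&\simeq (\gbar^*\fbar_!\yo_{\Ccal}(c))(b)
	\simeq (f_!g^*\yo_{\Ccal}(c))(b) \period
\end{align*}

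It then remains to evaluate the right-hand side. The presheaf $g^*\yo_{\Ccal}(c)$ on $\Acal$ is $a\mapsto\Map_{\Ccal}(g(a),c)$; its category of elements is the comma \category $\Acal\crosslimits_{\Ccal}\Ccal_{/c}$, with its projection to $\Acal$. By density of representables, $g^*\yo_{\Ccal}(c)\simeq\colim_{a\in\Acal\crosslimits_{\Ccal}\Ccal_{/c}}\yo_{\Acal}(a)$, reading an object of the comma \category as an object of $\Acal$. Since $f_!$ preserves colimits and sends $\yo_{\Acal}(a)$ to $\yo_{\Bcal}(f(a))$, and since evaluation at $b$ preserves colimits (colimits of presheaves being objectwise), we get
\begin{equation*}
	(f_!g^*\yo_{\Ccal}(c))(b) \simeq \colim_{a\in\Acal\crosslimits_{\Ccal}\Ccal_{/c}} \Map_{\Bcal}(b,f(a)) \period
\end{equation*}
The functor $a\mapsto\Map_{\Bcal}(b,f(a))$ is classified by the left fibration obtained by pulling back $\fromto{\Bcal_{b/}}{\Bcal}$ along $\Acal\crosslimits_{\Ccal}\Ccal_{/c}\to\Acal\to\Bcal$ (the projection followed by $f$), whose total space is $\Bcal_{b/}\crosslimits_{\Bcal}\Acal\crosslimits_{\Ccal}\Ccal_{/c}$. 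Since the colimit of an anima-valued functor is the realization of the total space of the associated left fibration, this yields $\Map_{\Dcal}(\gbar(b),\fbar(c))\simeq|\Bcal_{b/}\crosslimits_{\Bcal}\Acal\crosslimits_{\Ccal}\Ccal_{/c}|$, naturally in $b$ and $c$.

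Part~(3) will follow by applying part~(2) to the opposite pushout square. Since $(-)^{\op}$ is an autoequivalence of $\Catinfty$ preserving pushouts and full faithfulness, we have $\Dcal^{\op}\simeq\Bcal^{\op}\coproduct^{\Acal^{\op}}\Ccal^{\op}$ with $f^{\op}$ fully faithful, and $\Map_{\Dcal}(\fbar(c),\gbar(b))\simeq\Map_{\Dcal^{\op}}(\gbar(b),\fbar(c))$. By part~(2) the latter is the realization of $(\Bcal^{\op})_{b/}\crosslimits_{\Bcal^{\op}}\Acal^{\op}\crosslimits_{\Ccal^{\op}}(\Ccal^{\op})_{/c}$; using $(\Bcal^{\op})_{b/}\simeq(\Bcal_{/b})^{\op}$, $(\Ccal^{\op})_{/c}\simeq(\Ccal_{c/})^{\op}$, that $(-)^{\op}$ preserves pullbacks, and that realization is unchanged by $(-)^{\op}$, this rearranges to $|\Ccal_{c/}\crosslimits_{\Ccal}\Acal\crosslimits_{\Bcal}\Bcal_{/b}|$. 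I expect the only genuine care to be bookkeeping: keeping track of variances in the category-of-elements and Kan-extension steps — so that the coslice $\Bcal_{b/}$ rather than the slice $\Bcal_{/b}$ appears — and checking that the Beck--Chevalley map produced by \Cref{cor:adjbl} is precisely the comparison used above. The substantive input is already \Cref{cor:adjbl}, so I anticipate no serious obstacle beyond this.
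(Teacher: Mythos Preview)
Your proof is correct and follows essentially the same route as the paper: part~(1) is immediate from \Cref{thm:fully_faithful_functors_are_stable_under_pushout}, part~(2) reduces via \Cref{cor:adjbl} to evaluating $(f_!g^*\yo_{\Ccal}(c))(b)$ and then unwinds this as a realization, and part~(3) is obtained by dualizing. The only cosmetic difference is that in part~(2) the paper applies the pointwise formula for $f_!$ first (indexing over $(\Acal\crosslimits_{\Bcal}\Bcal_{b/})^{\op}$) and then unstraightens the remaining functor as a right fibration, whereas you first write $g^*\yo_{\Ccal}(c)$ as a colimit of representables over $\Acal\crosslimits_{\Ccal}\Ccal_{/c}$ and then unstraighten the resulting functor as a left fibration; both orderings land on the same iterated fiber product.
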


\begin{proof}
	Item (1) is simply full faithfulness of $\fbar$, so it follows from \Cref{thm:fully_faithful_functors_are_stable_under_pushout}. 
	
    Now let us prove (2).
    We note that since left Kan extension preserves representable functors,
    \begin{equation*}
    	\Map_{\Dcal}(\gbar(b),\fbar(c)) 
        = \left(\gbar^*\Map_{\Dcal}(-,\fbar(c))\right)(b)
        = \left(\gbar^*\fbar_! \Map_{\Ccal}(-,c)\right)(b) \period
    \end{equation*} 
	Thus, using \Cref{cor:adjbl} and the pointwise formula for left Kan extensions, we find
	\begin{align*}
		\Map_{\Dcal}(\gbar(b),\fbar(c)) &= f_!g^*\Map_{\Ccal}(-,c)(b) \\
		&= \colim_{(\Acal\crosslimits_{\Bcal} \Bcal_{b/})^{\op}} \Map_{\Ccal}(g(a),c) \period
	\end{align*}	
	Now, in terms of right fibrations, the functor
	\begin{equation*}
		\Acal^{\op}\to\Spc \comma \quad a\mapsto \Map_{\Ccal}(g(a),c)
	\end{equation*}
	is represented by the right fibration $\Acal\crosslimits_{\Ccal} \Ccal_{/c}$.
	Hence the same functor, restricted to $\Acal\crosslimits_{\Bcal} \Bcal_{b/}$ is represented by the pullback right fibration, i.e., $\Bcal_{b/}\crosslimits_{\Bcal} \Acal\crosslimits_{\Ccal} \Ccal_{/c}$. 
	Thus its colimit is indeed the realization of this \category. 

	Item (3) follows from (2) by dualizing.
	The functor $(-)^{\op} \colon \fromto{\Catinfty}{\Catinfty} $ is an equivalence of \categories, and hence preserves pushouts, and it also preserves full faithfulness. 
	Thus, by (2) applied to the opposite of the original square, we find
	\begin{align*}
		\Map_{\Dcal}(\fbar(c),\gbar(b)) &= \Map_{\Dcal^{\op}}(\gbar^{\op}(b),\fbar^{\op}(c)) \\
		&= \real{\Bcal^{\op}_{b/}\crosslimits_{\Bcal^{\op}} \Acal^{\op} \crosslimits_{\Ccal^{\op}} \Ccal^{\op}_{/c}} \\
		&= \real{\Ccal_{c/} \cross_{\Ccal} \Acal \cross_{\Bcal} \Bcal_{/b}} \comma
	\end{align*}
	as desired. 
\end{proof}

Using this formula we can detect when a square of fully faithful functors is a pushout square.

\begin{corollary}\label{cor:ff-and-ff}
     A square of \categories
     \begin{equation*}
        \begin{tikzcd}[column sep=2.5em]
            \Acal \arrow[r, "g", hooked] \arrow[d, "f"', hooked] & \Ccal \arrow[d, "\fbar", hooked] \\
            \Bcal \arrow[r, "\gbar"', hooked] & \Dcal \comma
        \end{tikzcd}
    \end{equation*}  
    where all functors are fully faithful is a pushout square if and only if the following conditions are satisfied:
    \begin{enumerate}
        \item\label{it:2ff-surjective} The functors $\fbar$ and $\gbar$ are jointly surjective.

        \item\label{it:2ff-ff} For all $b \in \Bcal \setminus \Acal$ and $c \in \Ccal\setminus \Acal$
        the maps
        \begin{equation*}
            |\Bcal_{b/} \crosslimits_\Bcal \Acal \crosslimits_\Ccal \Ccal_{/c}| 
            \longrightarrow \Map_\Dcal(\gbar(b), \fbar(c))
            \andeq
            |\Ccal_{c/} \crosslimits_\Ccal \Acal \crosslimits_\Bcal \Bcal_{/b}| 
            \longrightarrow \Map_\Dcal(\fbar(c), \gbar(b))
        \end{equation*}
        are equivalences.
    \end{enumerate}
\end{corollary}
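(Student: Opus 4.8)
The plan is to deduce both directions from the mapping-anima formulas already established. For the "only if" direction, suppose the square is a pushout. Condition \ref{it:2ff-surjective} is immediate: the pushout $\Dcal$ is generated under colimits by the images of $\Bcal$ and $\Ccal$, and since every object of a colimit of \categories lifts to one of the pieces of the diagram, $\fbar$ and $\gbar$ are jointly surjective on objects (here one should be slightly careful and note that objects of $\Dcal$ are determined by the objects of $\Acal$, $\Bcal$, $\Ccal$ up to the equivalences encoded by $f$ and $g$, which are injective-on-objects since they are fully faithful; so an object of $\Dcal$ not in the image of $\gbar$ must come from $\Ccal \setminus \Acal$). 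Condition \ref{it:2ff-ff} is then exactly the content of \Cref{cor:map3types}-(2) and -(3), applied to $b \in \Bcal \setminus \Acal$ and $c \in \Ccal \setminus \Acal$.

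For the "if" direction, the idea is to compare $\Dcal$ with the actual pushout $\Dcal' \colonequals \Bcal \coproduct_{\Acal} \Ccal$. The square being given, there is a canonical comparison functor $\Phi \colon \Dcal' \to \Dcal$ under $\Bcal$ and $\Ccal$, and we must show $\Phi$ is an equivalence. Since $\Phi$ is essentially surjective — by hypothesis \ref{it:2ff-surjective}, $\Dcal$ is covered by the images of $\Bcal$ and $\Ccal$, and these factor through $\Dcal'$ — it suffices to show $\Phi$ is fully faithful, i.e.\ induces an equivalence on mapping anima between any two objects. By essential surjectivity and the structure of $\Dcal'$ (every object of $\Dcal'$ comes from $\Bcal$ or $\Ccal$; and \Cref{thm:fully_faithful_functors_are_stable_under_pushout} tells us $\Ccal \to \Dcal'$ and, by the same token, $\Bcal \to \Dcal'$ are fully faithful) it is enough to check this on the four types of pairs of objects.

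We then go type by type. For a pair of objects both coming from $\Ccal$: the map $\Ccal \to \Dcal'$ is fully faithful by \Cref{thm:fully_faithful_functors_are_stable_under_pushout} and $\Ccal \to \Dcal$ is fully faithful by hypothesis, and $\Phi$ is compatible with both, so $\Phi$ is an equivalence on these mapping anima; similarly for a pair both coming from $\Bcal$, using that $\Bcal \to \Dcal'$ and $\Bcal \to \Dcal$ are fully faithful (the latter because the square is a pushout of the fully faithful $f$... wait, no — we don't yet know the square is a pushout; instead use that $\gbar \colon \Bcal \to \Dcal$ is assumed fully faithful in the statement). For the two mixed types, \Cref{cor:map3types}-(2),(3) applied to the genuine pushout $\Dcal'$ computes $\Map_{\Dcal'}(\gbar'(b),\fbar'(c))$ and $\Map_{\Dcal'}(\fbar'(c),\gbar'(b))$ as the realizations $|\Bcal_{b/} \crosslimits_{\Bcal} \Acal \crosslimits_{\Ccal} \Ccal_{/c}|$ and $|\Ccal_{c/} \crosslimits_{\Ccal} \Acal \crosslimits_{\Bcal} \Bcal_{/b}|$; hypothesis \ref{it:2ff-ff} says precisely that the comparison maps from these realizations to $\Map_{\Dcal}(\gbar(b),\fbar(c))$ and $\Map_{\Dcal}(\fbar(c),\gbar(b))$ are equivalences, and one checks these comparison maps factor through $\Phi$ (this is a naturality statement: the equivalences of \Cref{cor:map3types} are natural, so the square relating $\Map_{\Dcal'}$, the realization, and $\Map_{\Dcal}$ commutes). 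When $b \in \Acal$ or $c \in \Acal$ the relevant mapping anima live inside $\Ccal$ or $\Bcal$ and reduce to the first two cases. Hence $\Phi$ is fully faithful, thus an equivalence.

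The main obstacle I expect is bookkeeping around objects that lie in (the image of) $\Acal$: one must make sure that "$b \in \Bcal \setminus \Acal$ and $c \in \Ccal \setminus \Acal$" genuinely suffices, i.e.\ that the mixed mapping anima with one object in $\Acal$ are automatically handled by the fully-faithful-on-$\Bcal$ and fully-faithful-on-$\Ccal$ arguments — which it is, since such a mapping anima is a mapping anima in $\Bcal$ or in $\Ccal$. A secondary point requiring care is verifying that the comparison map $|\Bcal_{b/} \crosslimits_{\Bcal} \Acal \crosslimits_{\Ccal} \Ccal_{/c}| \to \Map_{\Dcal}(\gbar(b),\fbar(c))$ named in hypothesis \ref{it:2ff-ff} is the same map one gets by composing the \Cref{cor:map3types} equivalence for $\Dcal'$ with $\Phi_*$; this is a diagram-chase using the naturality built into the proof of \Cref{cor:map3types} (the equivalence there came from identifying representing right fibrations, which is natural in the pushout datum).
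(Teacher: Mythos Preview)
Your proposal is correct and follows essentially the same approach as the paper: form the comparison functor from the actual pushout $\Pcal$ to $\Dcal$, observe that essential surjectivity is equivalent to condition (1), that full faithfulness on the images of $\Bcal$ and $\Ccal$ is automatic (since both $\Bcal \to \Pcal$ and $\Bcal \to \Dcal$ are fully faithful, and likewise for $\Ccal$), and that the cross-term mapping anima match precisely when condition (2) holds, via \Cref{cor:map3types}. The paper's write-up is slightly more economical in that it treats both implications at once by phrasing everything as ``$F$ is an equivalence if and only if (1) and (2)'', rather than splitting into separate ``only if'' and ``if'' arguments, but the content is the same.
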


\begin{proof}
    Let $ \Pcal $ denote the pushout - we obtain a functor $F\colon \fromto{\Pcal}{\Dcal} $. Since the maps $\Bcal\to \Pcal$ and $\Ccal\to \Pcal$ are jointly surjective, we see that $ F $ is essentially surjective if and only if (1) holds.

 Furthermore, $F$ is fully faithful on the image of $\Bcal$ and on the image of $\Ccal$. Hence $ F $ is fully faithful if and only if $ F $ induces equivalences
    \begin{equation*}
        \Map_\Pcal(b, c) \too \Map_\Dcal(b, c) 
        \andeq
        \Map_\Pcal(c, b) \too \Map_\Dcal(c, b) 
    \end{equation*}
    for all $b \in \Bcal \setminus \Acal$ and $c \in \Ccal \setminus \Acal$.
    By \cref{cor:map3types}, this is exactly equivalent to condition (2).
\end{proof}


\subsection{The fourth mapping anima}\label{section:fourth}

The goal of this subsection is to compute the missing ``fourth'' mapping anima that was not yet described in \cref{cor:map3types}.
This requires \cref{prop:fold-pushout} as an input, which we prove in the next section using different techniques.

\begin{proposition}\label{prop:map4}
   Let  
   \begin{equation*}
        \begin{tikzcd}[column sep=2.5em]
            \Acal \arrow[r, "g"] \arrow[d, "f"', hooked] \arrow[dr, phantom, very near end, "\ulcorner", xshift=0.25em, yshift=-0.25em] & \Ccal \arrow[d, "\fbar", hooked] \\
            \Bcal \arrow[r, "\gbar"'] & \Dcal \comma
        \end{tikzcd}
    \end{equation*}  
    be a pushout square of  \categories where $ f $ is fully faithful.
   	For all $b_0, b_1 \in \Bcal$, there is a pushout square of anima
    \begin{equation*}
		\begin{tikzcd}[column sep = large]
		    \abs{\Bcal_{b_0/} \crosslimits_{\Bcal} \Acal \crosslimits_{\Bcal} \Bcal_{/b_1}} \arrow[r] \arrow[d] &
		    \Map_{\Bcal}(b_0, b_1) \arrow[d, "\gbar"] \\
		    \abs{\Bcal_{b_0/} \crosslimits_{\Bcal} \Acal \crosslimits_{\Ccal} \Ar(\Ccal) \crosslimits_{\Ccal} \Acal \crosslimits_{\Bcal} \Bcal_{/b_1}} \arrow[r] & 
		    \Map_{\Dcal}(\gbar(b_0), \gbar(b_1)) \period
		\end{tikzcd}
    \end{equation*}
    Here, the left-hand vertical map is induced by 
    \begin{align*}
        \Acal &\too \Acal \crosslimits_{\Ccal} \Ar(\Ccal) \crosslimits_{\Ccal} \Acal, \\ 
        a &\longmapsto (a, \id{g(a)}, a)
    \end{align*}
    and the horizontal maps are defined by composing.
\end{proposition}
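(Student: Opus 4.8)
The plan is to deduce the pushout square from \Cref{prop:fold-pushout}, using \Cref{cor:map3types} to recognize some of the anima that appear along the way. The geometric picture is that a morphism $\gbar(b_0)\to\gbar(b_1)$ in the pushout $\Dcal$ is witnessed by a \emph{necklace}: a zigzag of morphisms whose beads alternate between morphisms of $\Bcal$ and morphisms of $\Ccal$, with the objects at which consecutive beads meet lying in the image of $\Acal$. The input that makes this precise -- proved in \Cref{sec:computing_pushouts_via_necklaces} by means of the explicit formula for Segalification via necklaces proven in \cite{arXiv:2301.0865} -- is \Cref{prop:fold-pushout}, which I will use in the following form: $\Map_{\Dcal}(\gbar(b_0),\gbar(b_1))$ is exhibited as a colimit indexed by the possible necklace shapes, whose value at the shape with $k$ beads in $\Ccal$ is an iterated fiber product built, by gluing along copies of $\Acal$, from the elementary pieces
\begin{equation*}
	\Bcal_{b_0/}\crosslimits_{\Bcal}\Acal, \quad \Acal\crosslimits_{\Ccal}\Ar(\Ccal)\crosslimits_{\Ccal}\Acal, \quad \Acal\crosslimits_{\Bcal}\Ar(\Bcal)\crosslimits_{\Bcal}\Acal, \quad \Acal\crosslimits_{\Bcal}\Bcal_{/b_1} ;
\end{equation*}
the shape with $k=0$ (no $\Ccal$-bead) contributes $\Map_{\Bcal}(b_0,b_1)$.

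The heart of the argument is that this colimit collapses onto the asserted pushout, and this is exactly where full faithfulness of $f$ enters. An \emph{interior} $\Bcal$-bead of a necklace -- one that is neither the first nor the last -- has both of its endpoints in the image of $\Acal$; since $f$ is fully faithful, such a bead is the image of an essentially unique morphism of $\Acal$, and hence may be reinterpreted as a morphism of $\Ccal$ lying between its two neighboring $\Ccal$-beads. Those now-adjacent $\Ccal$-beads then merge, because $\Ccal$ satisfies the Segal condition. Iterating, every necklace shape with $k\geq 1$ is identified, inside the colimit, with the single-$\Ccal$-bead shape $\Bcal_{b_0/}\crosslimits_{\Bcal}\Acal\crosslimits_{\Ccal}\Ar(\Ccal)\crosslimits_{\Ccal}\Acal\crosslimits_{\Bcal}\Bcal_{/b_1}$, so what survives is the colimit of the span whose apex is the ``degenerate'' shape $\Bcal_{b_0/}\crosslimits_{\Bcal}\Acal\crosslimits_{\Bcal}\Bcal_{/b_1}$ -- the case in which the $\Ccal$-bead is an identity $\id{g(a)}$ -- mapping on one side into the single-$\Ccal$-bead shape by inserting that identity, and on the other side into $\Map_{\Bcal}(b_0,b_1)$ by composing in $\Bcal$. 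Passing to realizations, this is exactly the square in the statement, and unwinding the identifications shows that the left vertical map is the one induced by $a\mapsto(a,\id{g(a)},a)$ and that the horizontal maps are given by composition.

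The step I expect to be the main obstacle is making this collapse rigorous: one must show that restricting the necklace-indexed colimit to the two surviving shapes and their common overlap leaves the colimit unchanged, and one must do this coherently rather than merely on homotopy categories. This requires care with degeneracies -- in particular, with the fact that the locus of $\Acal\crosslimits_{\Ccal}\Ar(\Ccal)\crosslimits_{\Ccal}\Acal$ on which the morphism of $\Ccal$ is an identity is precisely the image of the map $a\mapsto(a,\id{g(a)},a)$, so that the collapsed diagram is a genuine pushout and not a larger colimit. The bulk of this bookkeeping is what \Cref{prop:fold-pushout} is designed to absorb; granting it, the identifications above are formal.
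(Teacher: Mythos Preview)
Your proposal misreads what \Cref{prop:fold-pushout} actually provides, and in doing so skips the step that carries the real content of the proof of \Cref{prop:map4}. \Cref{prop:fold-pushout} does \emph{not} exhibit $\Map_{\Dcal}(\gbar(b_0),\gbar(b_1))$ as a colimit over necklace shapes with alternating $\Bcal$- and $\Ccal$-beads; rather, it gives the specific pushout square
\begin{equation*}
\begin{tikzcd}
\Map_{\Bcal \coproductlimits^{\Acal} \Bcal}(b_0^L, b_1^R) \arrow[r, "\nabla"] \arrow[d] &
\Map_\Bcal(b_0, b_1) \arrow[d] \\
\Map_{\Dcal \coproductlimits^{\Ccal} \Dcal}(\gbar(b_0)^L, \gbar(b_1)^R) \arrow[r, "\nabla"'] &
\Map_\Dcal(\gbar(b_0), \gbar(b_1))\period
\end{tikzcd}
\end{equation*}
The necklace manipulations and the ``collapse of interior $\Bcal$-beads'' that you describe are the content of the \emph{proof} of \Cref{prop:fold-pushout} in \Cref{sec:computing_pushouts_via_necklaces}, not something you get to invoke again on top of it.

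Granting \Cref{prop:fold-pushout} in its actual form, you must still identify the two left-hand corners. \Cref{cor:map3types} (applied to the span $\Bcal \hookleftarrow \Acal \hookrightarrow \Bcal$) gives the top-left as $|\Bcal_{b_0/} \times_\Bcal \Acal \times_\Bcal \Bcal_{/b_1}|$ directly. But applied to $\Dcal \hookleftarrow \Ccal \hookrightarrow \Dcal$, it only gives the bottom-left as $|\Dcal_{\gbar(b_0)/} \times_\Dcal \Ccal \times_\Dcal \Dcal_{/\gbar(b_1)}|$, which still involves slices of $\Dcal$ --- the very category you are trying to compute. The substantive remaining step, which your proposal does not address, is to show that this is weakly equivalent to $|\Bcal_{b_0/} \times_\Bcal \Acal \times_\Ccal \Ar(\Ccal) \times_\Ccal \Acal \times_\Bcal \Bcal_{/b_1}|$. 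The paper does this by building comparison functors $F$ and $G$ over $\Ccal$, checking (via \Cref{cor:map3types} on fibers) that they are respectively a cocartesian and a cartesian fibration with weakly contractible fibers, and then invoking \Cref{lem:co/cart-contractible-fibers} to conclude that $F \times_\Ccal G$ is a weak equivalence. This is not formal, and your claim that ``granting it, the identifications above are formal'' is where the argument breaks.
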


\begin{remark}
    If we have $b_0 \in \Acal$, then the top horizontal map in \cref{prop:map4} is an equivalence.
    In this case we can also rewrite the bottom left corner as 
    \begin{equation*}
    	|\Ccal_{g(b_0)/} \crosslimits_{\Ccal} \Acal \crosslimits_{\Bcal} \Bcal_{/b_1}| \comma
    \end{equation*}
    which recovers the formulas for the 2nd and 3rd mapping anima, so that the bottom map is an equivalence, thus \cref{prop:map4} in this case follows from \cref{cor:map3types}.
    The same argument applies if $b_1 \in \Acal$, so we could without loss of generality assume that $b_0, b_1 \not\in \Acal$ for the proof of \cref{prop:map4}, but this wouldn't lead to any simplifications.
\end{remark}

We now prove \cref{prop:map4} with a forward reference to the next section, where we use \textit{necklaces} to establish a certain pushout square of mapping anima.

\begin{proof}[Proof of \cref{prop:map4}]
	Fix $b_0, b_1 \in \Bcal$. We interpret these as objects $b_0, b_1 \in \Bcal \coproductlimits^{\Acal} \Bcal$ where $b_0$ lives in the left and $b_1$ in the right copy of $\Bcal \hookrightarrow \Bcal \coproductlimits^{\Acal} \Bcal$, respectively.
	In the next section we prove \cref{prop:fold-pushout}, which tells us that there is a pushout diagram of anima
	\begin{equation*}
	    \begin{tikzcd}
	        \Map_{\Bcal \coproductlimits^{\Acal} \Bcal}(b_0, b_1) \arrow[r, "\nabla_\Bcal"] \arrow[d] &
	        \Map_\Bcal(b_0, b_1) \arrow[d] \\
	        \Map_{\Dcal \coproductlimits^{\Ccal} \Dcal}(g(b_0), g(b_1)) \arrow[r, "\nabla_\Dcal"'] &
	        \Map_\Dcal(g(b_0), g(b_1))
	    \end{tikzcd}
	\end{equation*}
	where the horizontal maps come from the fold functors $\nabla \colon \Bcal \coproductlimits^{\Acal} \Bcal \to \Bcal$ and $\nabla \colon \Dcal \coproductlimits^{\Ccal} \Dcal \to \Dcal$.

	Using our computation of the second and third mapping anima in \cref{cor:map3types}, we can rewrite the left two terms as
	\begin{align*}
	    \Map_{\Bcal \coproductlimits^{\Acal} \Bcal}(b_0, b_1) &\simeq |\Bcal_{b_0/} \crosslimits_{\Bcal} \Acal \crosslimits_{\Bcal} \Bcal_{/b_1}| \\ 
	    \shortintertext{and}
	    \Map_{\Dcal \coproductlimits^{\Ccal} \Dcal}(\gbar(b_0), \gbar(b_1)) &\simeq |\Dcal_{\gbar(b_0)/} \crosslimits_{\Dcal} \Ccal \crosslimits_{\Dcal} \Dcal_{/\gbar(b_1)}| \period
	\end{align*}
	While the first one is exactly what we need, the second one might not seem as useful because it uses the \category $\Dcal$, which we are trying to compute.
	What we will show is that our understanding of the second and third mapping anima suffices to describe the relevant fiber products of slices of $\Dcal$.
	
	To start, consider the functor
	\begin{align*}
	    F\colon \Bcal_{b_0/} \crosslimits_\Bcal \Acal \crosslimits_\Ccal \Ar(\Ccal)
	    & \too \Dcal_{\gbar(b_0)/} \crosslimits_\Dcal \Ccal \\
	    (\phi\colon b_0 \to f(a), \psi\colon g(a) \to c) & \longmapsto (\fbar(\psi) \circ \gbar(\phi) \colon \gbar(b_0) \to \fbar(c))
	\end{align*}
	Then $ F $ is a map from a cocartesian fibration over $\Ccal$ (indeed a free cocartesian fibration) to a left fibration over $\Ccal$.
	By \cite[Lemma A.1.8]{Haugseng2020}, $ F $ is itself a cocartesian fibration.
	(Condition (4) of \cite[Lemma A.1.8]{Haugseng2020} is satisfied as the cocartesian edges of $F_c$ are precisely the equivalences and thus are preserved by cocartesian transport along any $c \to c'$.)
	Taking fibers over $c \in \Ccal$, the functor $ F $ induces the functor
	\begin{equation*}
	    F_c \colon \Bcal_{b_0/} \crosslimits_\Bcal \Acal \crosslimits_\Ccal \Ccal_{/c} 
	    \too \Map_\Dcal(\gbar(b_0), \fbar(c)) \comma
	\end{equation*}
	which is a localization by \cref{cor:map3types}.
	Therefore $ F $ is a cocartesian fibration with weakly contractible fibers.
	By dual reasoning, the functor
	\begin{align*}
	    G\colon \Ar(\Ccal) \crosslimits_\Ccal \Acal \crosslimits_\Ccal \Bcal_{/b_1}
	    &\too \Ccal \crosslimits_\Dcal \Dcal_{/\gbar(b_1)} \\
	    (\phi\colon c \to g(a), \psi \colon f(a) \to b_1) & \longmapsto (\gbar(\psi) \circ \fbar(\phi) \colon \fbar(c) \to \gbar(b_1))
	\end{align*}
	is a \emph{cartesian} fibration with weakly contractible fibers.
	
	In thus follows from \cref{lem:co/cart-contractible-fibers} below (setting $B = \Ccal$, $ p_0 = F $, and $ p_1 = G $) that the top functor in the following diagram is a weak equivalence:
	\begin{equation*}\begin{tikzcd}[column sep=2.5em]
	    \Bcal_{b_0/} \crosslimits_\Bcal \Acal \crosslimits_\Ccal \Ar(\Ccal) \crosslimits_\Ccal \Ar(\Ccal) \crosslimits_\Ccal \Acal \crosslimits_\Ccal \Bcal_{/b_1} \arrow[r, "F \times G"] \arrow[d]
	    & \Dcal_{\gbar(b_0)/} \crosslimits_\Dcal \Ccal \crosslimits_\Dcal \Dcal_{/\gbar(b_1)} \arrow[d] \\
	    \Bcal_{b_0/} \crosslimits_\Bcal \Acal \crosslimits_\Ccal \Ar(\Ccal) \crosslimits_\Ccal \Acal \crosslimits_\Ccal \Bcal_{/b_1} \arrow[r]
	    & \Map_{\Dcal \coproductlimits^\Ccal \Dcal}(\gbar(b_0), \gbar(b_1)) \period
	\end{tikzcd}\end{equation*}
	Here the bottom functor is defined analogously to $ F $ and $G$ by mapping to the pushout $\Dcal\coproductlimits^\Ccal \Dcal$ and composing all morphisms.
	The right vertical map is a weak equivalence by \cref{cor:map3types} applied to the pushout of the span $\Dcal \leftarrow \Ccal \to \Dcal$, where both arrows are fully faithful.
	The left functor has both adjoints because the composition functor
	\begin{equation*}
	    ([1] \xrightarrow{\delta^1} [2])^* \colon \Ar(\Ccal) \crosslimits_\Ccal \Ar(\Ccal) = \Fun([2], \Ccal)  \too \Fun([1],\Ccal) = \Ar(\Ccal)
	\end{equation*}
	has both adjoints and they are fiberwise over $\Ccal \times \Ccal$.
	In particular, the left functor is also a weak equivalence and so we conclude that the bottom functor is a weak equivalence as well.
	This is all that was left to show.
\end{proof}

\begin{lemma}\label{lem:co/cart-contractible-fibers}
    Let $p_0\colon \Ecal_0 \to \Bcal_0$ be a cocartesian fibration with weakly contractible fibers and let $p_1\colon \Ecal_1 \to \Bcal_1$ be a cartesian fibration with weakly contractible fibers.
    Then for any functors $\Bcal_0 \to \Bcal \leftarrow \Bcal_1$ the functor
    \begin{equation*}
        p_0 \times p_1\colon \Ecal_0 \cross_{\Bcal} \Ecal_1 \too \Bcal_0 \cross_{\Bcal} \Bcal_1
    \end{equation*}
    is a weak equivalence.
\end{lemma}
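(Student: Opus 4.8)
The plan is to reduce the lemma to the following fact, for which I will also recall a proof: a cocartesian fibration with weakly contractible fibers is a weak equivalence, and, dually, a cartesian fibration with weakly contractible fibers is a weak equivalence. Granting this, I would factor $p_0 \times p_1$ as the composite
\[
    \Ecal_0 \crosslimits_{\Bcal} \Ecal_1 \too \Bcal_0 \crosslimits_{\Bcal} \Ecal_1 \too \Bcal_0 \crosslimits_{\Bcal} \Bcal_1 \comma
\]
where the first functor applies $p_0$ in the first coordinate and the second applies $p_1$ in the second. By the pasting law for pullback squares, the first functor is the base change of $p_0$ along the projection $\Bcal_0 \crosslimits_{\Bcal} \Ecal_1 \to \Bcal_0$, and the second is the base change of $p_1$ along the projection $\Bcal_0 \crosslimits_{\Bcal} \Bcal_1 \to \Bcal_1$. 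Since cocartesian fibrations are stable under base change with fibers unchanged, the first functor is a cocartesian fibration each of whose fibers is a fiber of $p_0$, hence weakly contractible; dually, the second functor is a cartesian fibration each of whose fibers is a fiber of $p_1$, hence weakly contractible. By the fact above, both functors are weak equivalences, so their composite $p_0 \times p_1$ is too.

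It remains to recall why a cocartesian fibration $q \colon \Xcal \to \Ycal$ with weakly contractible fibers is a weak equivalence; the cartesian case then follows by applying this to $q^{\op} \colon \Xcal^{\op} \to \Ycal^{\op}$ and using $\abs{\Zcal^{\op}} \simeq \abs{\Zcal}$. Let $F \colon \Ycal \to \Catinfty$ classify $q$. The colimit of $F$ computed in $\Catinfty$ is the localization of $\Xcal$ at its cocartesian edges, and any localization induces an equivalence on realizations (as $\abs{-} \colon \Catinfty \to \Ani$ factors through it); since $\abs{-}$ moreover preserves colimits, we obtain a natural equivalence $\abs{\Xcal} \simeq \colim_{y \in \Ycal} \abs{\Xcal_y}$ lying over $\abs{\Ycal} \simeq \colim_{y \in \Ycal} \ast$. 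When every fiber $\Xcal_y$ is weakly contractible, the source colimit is the colimit of the constant diagram at a point, so $\abs{q} \colon \abs{\Xcal} \to \abs{\Ycal}$ is an equivalence.

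I expect this last input to be essentially the only content. Alternatively one may invoke Quillen's Theorem A: for a cocartesian fibration $q$, cocartesian transport exhibits the fiber inclusion $\Xcal_y \hookrightarrow \Xcal \crosslimits_{\Ycal} \Ycal_{/y}$ as a right adjoint, so each comma \category $\Xcal \crosslimits_{\Ycal} \Ycal_{/y}$ is weakly equivalent to $\Xcal_y$, hence weakly contractible, whence $q$ is a weak equivalence. Everything else — the factorization, the pasting law, and the stability of (co)cartesian fibrations under base change together with the identification of their fibers — is entirely formal.
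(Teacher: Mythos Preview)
Your proof is correct and follows essentially the same approach as the paper: factor $p_0 \times p_1$ through $\Bcal_0 \crosslimits_{\Bcal} \Ecal_1$, identify each factor as a base change of $p_0$ or $p_1$, and conclude via the fact that (co)cartesian fibrations with weakly contractible fibers are weak equivalences. The paper simply invokes this last fact, whereas you supply two arguments for it (via straightening/colimits and via Theorem~A); that is the only elaboration beyond what the paper writes.
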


\begin{proof}
    We can factor the functor as
    \begin{equation*}
        \Ecal_0 \cross_{\Bcal} \Ecal_1 
        \too \Bcal_0 \cross_{\Bcal} \Ecal_1
        \too \Bcal_0 \cross_{\Bcal} \Bcal_1 \period
    \end{equation*}
    The first functor is a cartesian fibration with weakly contractible fibers and the second functor is a cocartesian fibration with weakly contractible fibers.
    (As they are basechanged from $p_0$ and $p_1$, respectively.)
    Therefore both are weak equivalences and hence so is their composite.
\end{proof}


\section{Computing pushouts via necklaces}\label{sec:computing_pushouts_via_necklaces}

Given any pushout square of \categories
\begin{equation}\label{eq:general_pushout_square}
    \begin{tikzcd}[column sep=2.5em]
        \Acal \arrow[d, "g"'] \arrow[r, "f"] \arrow[dr, phantom, very near end, "\ulcorner", xshift=0.25em, yshift=-0.25em] & \Bcal \arrow[d, "\gbar"] \\
        \Ccal \arrow[r, "\fbar"'] & \Dcal \comma
    \end{tikzcd}
\end{equation}  
we can take the pushout in $\Fun([1]\times [1], \Catinfty)$ of the square $(\gbar\colon \id{\Bcal} \to \id{\Dcal})$ with itself along the above square to obtain another pushout square whose horizontal maps are fold maps:
\begin{equation}\label{eqn:fold-pushout}
    \begin{tikzcd}[column sep=2.5em]
        \Bcal \coproductlimits^{\Acal} \Bcal \arrow[r, "\nabla"] \arrow[d, "\gbar \cup \gbar"'] \arrow[dr, phantom, very near end, "\ulcorner", xshift=0.25em, yshift=-0.25em] & \Bcal \arrow[d, "\gbar"] \\
        \Dcal \coproductlimits^{\Ccal} \Dcal \arrow[r, "\nabla"'] & \Dcal \period
    \end{tikzcd}
\end{equation}  
The goal of this section is to use the Segalification formula of \cite{arXiv:2301.0865} to prove that whenever $ f $ is fully faithful, this square has the following curious property.

\begin{proposition}\label{prop:fold-pushout}
    Consider a pushout square of \categories \eqref{eq:general_pushout_square} where $ f \colon \Acal \hookrightarrow \Bcal $ is fully faithful.
    For all $b_0,b_1 \in \Bcal \coproductlimits^{\Acal} \Bcal$ the square 
    \begin{equation*}
        \begin{tikzcd}[column sep=2.5em]
            \Map_{\Bcal \coproductlimits^{\Acal} \Bcal}(b_0, b_1) \arrow[r, "\nabla"] \arrow[d, "\gbar \cup \gbar"'] \arrow[dr, phantom, very near end, "\ulcorner", xshift=0.25em, yshift=-0.25em] & \Map_\Bcal(b_0, b_1) \arrow[d, "\gbar"] \\
            \Map_{\Dcal \coproductlimits^{\Ccal} \Dcal}(\gbar(b_0), \gbar(b_1)) \arrow[r, "\nabla"'] & \Map_{\Dcal}(\gbar(b_0), \gbar(b_1))
        \end{tikzcd}
    \end{equation*}  
    induced by \eqref{eqn:fold-pushout} on mapping anima is a pushout square.
\end{proposition}

Since the coproduct inclusions $\Bcal \hookrightarrow \Bcal \coproductlimits^\Acal \Bcal$ and $\Dcal \hookrightarrow \Dcal \coproductlimits^\Ccal \Dcal$ are both fully faithful, note that if $b_0 $ and $ b_1$ both lie in the left or both lie in the right copy of $\Bcal$ the two horizontal maps are equivalences. 
Therefore the new content of \cref{prop:fold-pushout} lies in the case where the $b_0$ and $ b_1$ are on opposite sides, and without loss of generality we will assume that $b_0$ is in the left and $b_1$ in the right copy.


\subsection{Recollection on necklaces}

To prove \cref{prop:fold-pushout} we use the Segalification formula in terms of necklaces, proved by Barkan and the third author in \cite{arXiv:2301.0865}.
Their formula was inspired by Dugger and Spivak's formula for the coherent nerve \cite{MR2764042}.

\begin{recollection}
	There is an adjunction
	\begin{equation*}  
		\adjto{\ac}{\sSpc}{\Catinfty}{\Nerve}
	\end{equation*}
	where the \textit{associated category} functor $\ac$ is left Kan extended from the full inclusion $ \incto{\DDelta}{\Catinfty} $.
	The right adjoint $\Nerve$ is the Rezk nerve, which assigns to \acategory $\Ccal$ the simplicial anima
	\begin{equation*}
		\Nerve_n(\Ccal) = \Map_{\Catinfty}([n], \Ccal) = \Fun([n],\Ccal)^{\equivalent} \period
	\end{equation*}
\end{recollection}

Since $ \ac $ is a Bousfield localization, we can compute a pushout of \categories by computing the pushout of their nerves and then applying $\ac$ to the result.
In order to compute the fourth mapping anima we recall from \cite{arXiv:2301.0865} a formula for the mapping anima in the associated category $\ac(X)$ of a simplicial anima $ X $.
This uses the notion of \textit{necklaces}, which we now recall.

\begin{figure}[h]
    \centering
    \includegraphics[width = .5\linewidth]{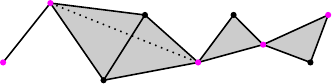}
    \caption{The necklace $N = \Delta^1 \vee \Delta^3 \vee \Delta^2 \vee \Delta^2$, with its joints marked in pink.
    The alternative notation for this necklace is $([8], \{0,1,4,6,8\})$.}
    \label{fig:necklace}
\end{figure}

\begin{recollection}[(Necklaces)]
	Let $\sSet_{**} \colonequals \sSet_{*\sqcup */}$ be the $ 1 $-category of bipointed simplicial sets.
	Then $ \sSet_{**} $ has a monoidal structure defined by
	\begin{equation*}
	    (A, a_0, a_1) \vee (B, b_0, b_1) \coloneq (A \coproduct^{a_1=b_0} B, a_0, b_1)
	\end{equation*}
	and we define the $ 1 $-category of necklaces to be the smallest full subcategory $\Nec \subset \sSet_{**}$ that contains all simplices $([n], 0, n)$ and is closed  under the monoidal product.
	For every necklace $N$ there is a unique, strictly monotone sequence of integers $0 = a_0 < a_1 < \dots < a_k = n$ such that
	\begin{equation*}
	    N \isomorphic \Delta^{a_1-a_0} \vee \dots \vee \Delta^{a_k-a_{k-1}} \period
	\end{equation*}
	The simplices $\Delta^{a_i-a_{i-1}} \subset N$ are called \emph{maximal simplices} and the subset $\{a_0, \dots, a_n\} \subset N$ is called the set of \emph{joints} of $N$, denoted $ \Joints(N) $.
	The set of joints $\Joints(N)$ may equivalently be described as the subset of those $0$-simplices of $N$ that do not appear as the middle vertex of a non-degenerate $2$-simplex in $N$.

	If $[n] \in \DDelta$ and $A \subset [n]$ is a subset containing $0$ and $n$, then we let $([n], A)$ denote the necklace defined as the unique subnecklace of $\Delta^n$ with joints $A$.
	A morphism $d\colon [n] \to [m]$ in $\DDelta$ to induces a map of necklaces $([n], A) \to ([m], B)$ if and only if it satisfies $d(A) \supset B$. 
	In particular, $d$ must be an active morphism.
	In other words, the functor
	\begin{equation*}
	    \Nec \longrightarrow \Deltaact \comma \qquad
	    N = ([n], A) \longmapsto [n]
	\end{equation*}
	is a cocartesian fibration and its straightening sends $[n]$ to the (opposite of the) poset of subsets $A \subset [n]$ that contain $0$ and $n$.
\end{recollection}

\begin{nul}
	For a simplicial anima $ X $ and two points $x_0, x_1 \in X_0$ we would like to compute the mapping anima $\Map_{\ac(X)}(x_0, x_1)$.
	The triple $(X, x_0, x_1)$ defines an object in the \category 
	\begin{equation*}
		\sSpc_{**} \coloneq \sSpc_{*\sqcup */}
	\end{equation*}
	of bipointed simplicial anima.
	The main theorem of \cite{arXiv:2301.0865} now says that the mapping anima can be computed as
	\begin{equation*}
	    \Map_{\ac(X)}(x_0, x_1) \simeq \colim_{N \in \Necop} \Map_{**}(N, (X, x_0, x_1)) \period
	\end{equation*}
	The unstraightening of the presheaf $\Map_{**}(-, (X, x_0, x_1))$ is given by
	\begin{equation*}
	    \Nec_{x_0, x_1}(X) \coloneq \Nec \crosslimits_{\sSpc_{**}} (\sSpc_{**})_{/(X, x_0, x_1)} \period
	\end{equation*}
	Therefore, the mapping anima in $\ac(X)$ can also be described as
	\begin{equation*}
	    \Map_{\ac(X)}(x_0, x_1) \simeq |\Nec_{x_0, x_1}(X)| \period
	\end{equation*}
	We mainly apply this in the case where $ X = \Nerve(\Bcal) \coproduct^{\Nerve(\Acal)} \Nerve(\Ccal) $ for some span $ \Bcal \hookleftarrow \Acal \rightarrow \Ccal $.
	However, while studying this situation, we also need to consider a few variants of this pushout.
\end{nul}


\subsection{A pushout in terms of constrained necklaces}

In order to simplify the colimit over the necklace category we use the subpresheaf
\begin{equation*}
	\Map^{(A)}(-,X) \subset \Map(-, X)
\end{equation*}
on $\Nec$ of \textit{constrained necklaces}, which is defined by requiring that the necklace passes through $A$ (with one of its joints).

\begin{definition}
    For a bipointed simplicial anima $(X, x_0, x_1) \in \sSpc_{**}$, a union of components $A \subset X_0$, and $N \in \Nec$ we define the anima of \defn{$ A $-constrained necklace maps}
    \begin{equation*}  
        \Map^{(A)}_{**}(N, X) \subset 
        \Map_{**}(N, X)
    \end{equation*}
    as the subanima of those maps $f\colon N \to X$ such that there is a joint $j \in \Joints(N)$ that is mapped to $f(j) \in A$.
    Note that this defines a subpresheaf because if $d\colon M \to N$ is a map of necklaces, then $d(\Joints(M)) \supset \Joints(N)$ and hence we can find $j' \in M$ with $d(j') = j$ and $f(d(j')) = f(j) \in A$.
\end{definition}

\begin{notation}
	Throughout the rest of this section, we fix a span of \categories $ \Bcal \hookleftarrow \Acal \rightarrow \Ccal $ where the left-hand functor is fully faithful.
\end{notation}

\begin{notation}
	We also introduce some notation for the simplicial anima $P = \Nerve(\Bcal) \coproduct^{\Nerve(\Acal)} \Nerve(\Ccal)$ that is the pushout of the nerves.
	Since $\Nerve(\Acal) \hookrightarrow \Nerve(\Bcal)$ is levelwise a monomorphism, $\Nerve_n \Bcal$ decomposes as a coproduct of $\Nerve_n \Acal$ and its complement $Q_n\subset \Nerve_n \Bcal$.
	We have a similar decomposition of $P_n$ with the same complement so that
	\begin{equation*}
		\Nerve_n \Bcal = (\Nerve_n \Acal) \sqcup Q_n
		\qquad \text{and} \qquad
		P_n = (\Nerve_n \Ccal) \sqcup Q_n \period
	\end{equation*}
	Note, however, that $Q_n$ is \emph{not} a simplicial anima.
	We further let $Q_n' \subset Q_n$ denote the subanima of those $n$-simplices where neither the $0$th nor the $n$-th vertex is in $\Nerve_0 \Acal$. 
	(Here $Q_0' = Q_0$.)
\end{notation}

We can now write the fourth mapping anima as a pushout of other anima, which we will have to study in more detail below.

\begin{lemma}\label{lem:Map(A)-pushout}
    Let $\Dcal$ be the pushout of $\Bcal \hookleftarrow \Acal \rightarrow \Ccal$ 
    and let $P = \Nerve(\Bcal) \coproduct^{\Nerve(\Acal)} \Nerve(\Ccal)$ be the pushout of nerves taken in $\sSpc$.
    For all $b_0, b_1 \in \Bcal$ there is a pushout square
    \begin{equation*}
    	\begin{tikzcd}
	        \colim_{N \in \Necop} \Map_{**}^{(\Acal)}(N, \Nerve(\Bcal)) \arrow[r] \arrow[d] \arrow[dr, phantom, very near end, "\ulcorner", xshift=0.25em] &
	        \Map_\Bcal(b_0, b_1) \arrow[d] \\
	        \colim_{N \in \Necop} \Map_{**}^{(\Ccal)}(N, P) \arrow[r] &
	        \Map_\Dcal(g(b_0), g(b_1)) \period
	    \end{tikzcd}
    \end{equation*}
\end{lemma}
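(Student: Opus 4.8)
The plan is to combine the necklace description of mapping anima from \cite{arXiv:2301.0865} with a decomposition of each necklace into connected components.

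First I would note that $\Nerve(\Bcal)$ is a complete Segal object, so $\ac(\Nerve(\Bcal)) \simeq \Bcal$, and that $\ac$ preserves colimits, so $\ac(P) \simeq \Bcal \coprod^{\Acal} \Ccal = \Dcal$. Applying the necklace formula to $X = \Nerve(\Bcal)$ and to $X = P$ (pointed at $b_0,b_1$, resp.\ at their images $\iota(b_0),\iota(b_1)$, where $\iota \colon \Nerve(\Bcal) \to P$ is the canonical map) identifies the right-hand vertical map of the square to be proven with
\begin{equation*}
	\colim_{N \in \Necop} \Map_{**}(N, \Nerve(\Bcal)) \too \colim_{N \in \Necop} \Map_{**}(N, P)
\end{equation*}
induced by $\iota$, and the two left-hand corners (with their maps) as the colimits of the subpresheaves $\Map_{**}^{(\Acal)}(-, \Nerve(\Bcal))$ and $\Map_{**}^{(\Ccal)}(-, P)$. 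Since $\colim_{N \in \Necop}$ preserves pushouts and pushouts of anima-valued presheaves are computed pointwise, it then suffices to prove that for every necklace $N \simeq \Delta^{m_1} \vee \dots \vee \Delta^{m_k}$ the square of anima
\begin{equation*}
	\begin{tikzcd}[column sep=2.5em]
		\Map_{**}^{(\Acal)}(N, \Nerve(\Bcal)) \arrow[r] \arrow[d] & \Map_{**}(N, \Nerve(\Bcal)) \arrow[d] \\
		\Map_{**}^{(\Ccal)}(N, P) \arrow[r] & \Map_{**}(N, P)
	\end{tikzcd}
\end{equation*}
is a pushout square.

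Next I would decompose the right-hand terms into components. Since $f$ is fully faithful, $\Nerve_0(\Acal) \subset \Nerve_0(\Bcal)$ is a union of connected components, and so is $\Nerve_0(\Ccal) \subset P_0 = \Nerve_0(\Ccal) \sqcup Q_0$; hence ``some joint of $N$ is sent into $\Acal$'' (resp.\ ``into $\Ccal$'') is a clopen condition, giving summand decompositions $\Map_{**}(N, \Nerve(\Bcal)) = Z_{\Bcal} \sqcup \Map_{**}^{(\Acal)}(N, \Nerve(\Bcal))$ and $\Map_{**}(N, P) = Z_{P} \sqcup \Map_{**}^{(\Ccal)}(N, P)$, where $Z_{\Bcal}$ and $Z_{P}$ consist of the necklace maps no joint of which hits $\Acal$, resp.\ $\Ccal$. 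As $\iota_0$ sends $\Nerve_0(\Acal)$ into the $\Nerve_0(\Ccal)$-summand and restricts to the identity on $Q_0$, one has $\iota_0^{-1}(\Nerve_0(\Ccal)) = \Nerve_0(\Acal)$, so $\Map_{**}(N, \iota)$ respects these decompositions: it carries $\Map_{**}^{(\Acal)}$ into $\Map_{**}^{(\Ccal)}$ by the left vertical map $\phi$ and $Z_{\Bcal}$ into $Z_{P}$ by some map $\psi$. The square for $N$ then takes the form
\begin{equation*}
	\begin{tikzcd}[column sep=3em]
		\Map_{**}^{(\Acal)}(N, \Nerve(\Bcal)) \arrow[r] \arrow[d, "\phi"'] & Z_{\Bcal} \sqcup \Map_{**}^{(\Acal)}(N, \Nerve(\Bcal)) \arrow[d, "\psi \sqcup \phi"] \\
		\Map_{**}^{(\Ccal)}(N, P) \arrow[r] & Z_{P} \sqcup \Map_{**}^{(\Ccal)}(N, P) \comma
	\end{tikzcd}
\end{equation*}
whose pushout is $Z_{\Bcal} \sqcup \Map_{**}^{(\Ccal)}(N, P)$; so the square is a pushout if and only if $\psi \colon Z_{\Bcal} \to Z_{P}$ is an equivalence.

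Finally I would show $\psi$ is an equivalence, using $\Map_{**}(N, X) \simeq X_{m_1} \crosslimits_{X_0} \dots \crosslimits_{X_0} X_{m_k}$ with the two extreme vertices pinned to the basepoints and $P_n = Q_n \sqcup \Nerve_n(\Ccal)$. A necklace map in $Z_{P}$ has every joint --- in particular the $0$th vertex of each maximal simplex $\Delta^{m_i}$ --- in the $Q_0$-summand, and since every simplex of $\Nerve_{m_i}(\Ccal)$ has all its vertices in the $\Nerve_0(\Ccal)$-summand, each maximal simplex must lie in $Q_{m_i}$; likewise, every maximal simplex of a necklace map in $Z_{\Bcal}$ lies in $Q_{m_i} \subset \Nerve_{m_i}(\Bcal)$. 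Because $\iota$ is the identity on each $Q$-summand, and $Q_0 \hookrightarrow \Nerve_0(\Bcal)$ and $Q_0 \hookrightarrow P_0$ are monomorphisms (so equalities of points of $Q_0$ in $\Nerve_0(\Bcal)$ or in $P_0$ already hold in $Q_0$), both $Z_{\Bcal}$ and $Z_{P}$ are naturally identified with the anima of tuples $(\sigma_1, \dots, \sigma_k)$ with $\sigma_i \in Q_{m_i}$, consecutive ends agreeing in $Q_0$, and extreme ends equal to $b_0$ and $b_1$; under these identifications $\psi$ is the identity. (If $b_0$ or $b_1$ lies in $\Acal$, both $Z_{\Bcal}$ and $Z_{P}$ are empty.) The hard part is exactly this last step: matching the fibre products over $P_0$ with those over $\Nerve_0(\Bcal)$ while tracking the bipointing, and recognising that $\iota$ fails to be a monomorphism precisely on the part $\Nerve(\Acal) \to \Nerve(\Ccal)$ that is collapsed in the pushout --- which is what forces the whole comparison onto the $Q$-summands, where $\iota$ is an isomorphism.
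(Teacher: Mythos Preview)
Your proposal is correct and follows essentially the same approach as the paper: reduce to a pointwise pushout of presheaves on $\Nec$, decompose $\Map_{**}(N,-)$ into the constrained part and its complement, and check that $\iota$ identifies the complements. The paper packages the complement more explicitly as $Q_{n_1}' \times_{Q_0} \cdots \times_{Q_0} Q_{n_k}'$ (where $Q_n' \subset Q_n$ records that both endpoint vertices lie in $Q_0$), which is exactly your $Z_{\Bcal} \simeq Z_P$ once one unwinds your verbal description.
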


\begin{proof}
    Consider the following square in $\Fun(\Necop, \Spc)$:
    \begin{equation}\label{eqn:Nec-psh-pushout}
	    \begin{tikzcd}
	        \Map_{**}^{(\Acal)}(-, \Nerve(\Bcal)) \arrow[r, hooked] \arrow[d] &
	        \Map_{**}(-, \Nerve(\Bcal)) \arrow[d] \\
	        \Map_{**}^{(\Ccal)}(-, P) \arrow[r, hooked] &
	        \Map_{**}(-, P) \period
	    \end{tikzcd}
    \end{equation}
    For a given necklace $N = \Delta^{n_1} \vee \dots \vee \Delta^{n_k}$ we have a decomposition
    \begin{align*}
        \Map_{**}(N, \Nerve(\Bcal)) &= \Nerve_{n_1}\Bcal \crosslimits_{\Nerve_0\Bcal} \cdots \crosslimits_{\Nerve_0\Bcal} \Nerve_{n_k}\Bcal \\
        &= \Map_{**}^{(\Acal)}(N, \Nerve(\Bcal)) \sqcup \left(Q_{n_1}'\times_{Q_0} \dots \times_{Q_0} Q_{n_k}'\right)
    \end{align*}
    because the complement of $\Map_{**}^{(\Acal)}(N, \Nerve(\Bcal)) \subset \Map_{**}(N, \Nerve(\Bcal))$ consists of all those necklaces in $\Bcal$ for which \emph{every} joint is not in $\Acal$ and therefore each maximal simplex is in $Q_{n_i}' \subset \Nerve_{n_i}\Bcal$ as both its $0$th and last vertex are in $Q_0$.
    By the same argument we also have a decomposition
    \begin{equation*}
        \Map_{**}(N, P) 
        = \Map_{**}^{(\Ccal)}(N, P) \sqcup \left(Q_{n_1}'\times_{Q_0} \dots \times_{Q_0} Q_{n_k}'\right) \period
    \end{equation*}
    Therefore, in the square (\ref{eqn:Nec-psh-pushout}) the two horizontal maps are monomorphisms and their complements are identified by the vertical maps, so the square is a pushout square of presheaves of anima on $\Nec$.
    Taking the colimit over $\Necop$ we obtain a pushout square
    \begin{equation*}\begin{tikzcd}
        \colim_{N \in \Necop} \Map_{**}^{(\Acal)}(N, \Nerve(\Bcal)) \arrow[r] \arrow[d] \arrow[dr, very near end, phantom, "\ulcorner"] &
        \colim_{N \in \Necop} \Map_{**}(N, \Nerve(\Bcal)) \arrow[d] \\
        \colim_{N \in \Necop} \Map_{**}^{(\Ccal)}(N, P) \arrow[r] &
        \colim_{N \in \Necop} \Map_{**}(N, P) \period
    \end{tikzcd}\end{equation*}
    Since $\ac(\Nerve(\Bcal)) = \Bcal$ and $\ac(P) = \Dcal$ the two right-hand terms are, by the main theorem of \cite{arXiv:2301.0865}, equivalent to $\Map_\Bcal(b_0, b_1)$ and $\Map_\Dcal(g(b_0), g(b_1))$, respectively.
\end{proof}


\subsection{Segal conditions away from subsets}

The remainder of the proof of \cref{prop:fold-pushout} is about identifying the left two terms in the pushout square in \cref{lem:Map(A)-pushout}.
For this we need the following criterion:

\begin{definition}
    For a simplicial anima $ X $ we say that a simplicial subanima $A \subset X$ is \defn{full} if each $A_n \hookrightarrow X_n$ is a monomorphism and an $n$-simplex is in $A_n$ if and only if all its vertices are in $A_0$.
    We say $ X $ is \emph{Segal away from $A$} if it satisfies the following:
    for any map of necklaces $d\colon N \to N'$ that is a bijection on vertices, the restriction map
    \begin{equation*}
        d^*\colon \Map(N', X) \longrightarrow \Map(N, X)
    \end{equation*}
    has contractible fibers at all those points $f\colon N = ([n], J) \to X$ with $d(f^{-1}(A_0) \cap J) \subset J'$.
    In other words, if $f\colon ([n], J) \to X$ is a necklace in $ X $ and $j \in J \setminus \{\bot,\top\}$ is an inner joint with $f(j) \not\in A_0$, then $ f $ extends uniquely to $([n], J\setminus \{j\})$. 
\end{definition}

\begin{observation}\label{obs:Segal-away-Delta-n}
    A simplicial anima $ X $ is Segal away from a full subanima $A$ if and only if for every necklace $N = ([n], J)$ the map
    \begin{equation*}
        \Map_{\sSpc}(\Delta^n, X) \longrightarrow \Map_{\sSpc}(N, X)
    \end{equation*}
    has contractible fibers over all those $f\colon N \to X$ that send no inner joints to $A$, i.e., 
    \begin{equation*}
    	f(J\setminus \{\bot,\top\}) \subset X_0 \setminus A_0 \comma
    \end{equation*}
    or equivalently $f^{-1}(A_0) \cap J \subset \{\bot,\top\}$.
    Indeed, this implies the general condition because if $d\colon N \to N'$ is a more general map we can write $N' = \Delta^{n_1} \vee \dots \vee \Delta^{n_k}$ and this induces a decomposition $N = N_1 \vee \dots \vee N_k$ by taking preimages.
    Therefore, the map $d$ can be written as $d=d_1 \vee \dots \vee d_k$ such that each $d_i$ has a full simplex as its target.
    We know that each of the $d_i^*$ have contractible fibers at the relevant points and writing $d^*$ as an iterated pullback of the $d_i^*$ in $\Ar(\Spc)$ we get that $d^*$ also has contractible fibers at the points in question.
\end{observation}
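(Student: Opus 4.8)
The statement packages two implications, and the forward one is immediate: if $X$ is Segal away from $A$, then for a necklace $N = ([n], J)$ we apply the definition of Segal away from $A$ to the unique necklace map $d\colon N \to \Delta^n$ that is the identity on vertices (it collapses every joint of $N$ onto $\{0, n\}$). Since $\Joints(\Delta^n) = \{0, n\}$, the side condition $d(f^{-1}(A_0) \cap J) \subset \Joints(\Delta^n)$ is exactly $f^{-1}(A_0) \cap J \subset \{\bot, \top\}$, so contractibility of the fibres of $d^*$ at such $f$ is precisely the displayed $\Delta^n$-condition. Hence the content lies in the converse, and my plan is to bootstrap the $\Delta^n$-condition to the general Segal-away-from-$A$ condition by decomposing along wedges.

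So fix a vertex-bijective necklace map $d\colon N \to N'$ and a necklace $f\colon N = ([n], J) \to X$ with $d(f^{-1}(A_0) \cap J) \subset \Joints(N')$; the goal is that $d^*\colon \Map(N', X) \to \Map(N, X)$ has contractible fibre over $f$. Writing the target canonically as $N' = \Delta^{n_1} \vee \dots \vee \Delta^{n_k}$ and pulling the maximal simplices back along $d$ gives a compatible decomposition $N = N_1 \vee \dots \vee N_k$ with $N_i = d^{-1}(\Delta^{n_i})$ and $d = d_1 \vee \dots \vee d_k$. Because $d$ is vertex-bijective, each $d_i\colon N_i \to \Delta^{n_i}$ is again vertex-bijective with full-simplex target, i.e.\ an instance of the $\Delta^{n_i}$-condition.

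Next comes the joint bookkeeping, which I expect to be the only delicate step. The joints of $N'$ are exactly its wedge vertices $0 = a_0 < a_1 < \dots < a_k = n'$; their preimages under the vertex-bijection $d$ are wedge vertices of $N$, hence joints of $N$, and the remaining joints of $N$ are precisely the inner joints of the pieces $N_i$. Therefore the hypothesis $d(f^{-1}(A_0) \cap J) \subset \Joints(N')$ says exactly that $f$ maps no inner joint of any $N_i$ into $A_0$; equivalently, writing $f_i$ for the restriction of $f$ to $N_i$, we get $f_i^{-1}(A_0) \cap \Joints(N_i) \subset \{\bot_i, \top_i\}$ for every $i$. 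By the $\Delta^{n_i}$-condition, $d_i^*$ then has contractible fibre over $f_i$.

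Finally I would reassemble. Since a wedge is a pushout along a $0$-simplex, mapping out of $\vee$ into $X$ turns it into an iterated fibre product over $X_0 = \Map(\Delta^0, X)$; under the resulting identifications of $\Map(N', X)$ and $\Map(N, X)$, the map $d^*$ is the morphism of iterated $X_0$-fibre products induced by the $d_i^*$ and identities of $X_0$ --- equivalently, an iterated pullback of the $d_i^*$ in $\Ar(\Spc)$. Since ``having contractible fibre over a given point'' is stable under base change (a fibre of a limit of morphisms being the limit of the fibres), $d^*$ has fibre $\prod_i \mathrm{fib}_{f_i}(d_i^*)$ over $f$, which is contractible. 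I do not anticipate any obstacle beyond the combinatorics of joints in the third step.
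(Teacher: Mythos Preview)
Your argument is correct and follows exactly the approach sketched in the paper: decompose $N'$ into maximal simplices, pull back to decompose $N$ and $d$ accordingly, and then realize $d^*$ as an iterated pullback of the $d_i^*$ in $\Ar(\Spc)$. You have simply filled in the details the paper leaves implicit, notably the joint bookkeeping showing that the side condition on $f$ restricts to the $\Delta^{n_i}$-condition on each $f_i$, and the explicit treatment of the (easy) forward direction.
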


We have plenty of examples of this property because it has the following stability under pushouts.

\begin{lemma}\label{lem:Segal-away-pushout}
    Let $A \subset X$ be a full simplicial subanima and $A \to C$ any map.
    If $ X $ is Segal away from $A$, then $C \coproduct^A X$ is Segal away from $C$.
\end{lemma}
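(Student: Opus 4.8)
The plan is to unwind the definition of ``Segal away from'' via the criterion in \Cref{obs:Segal-away-Delta-n}: it suffices to show that for every necklace $N = ([n], J)$ the restriction map $\Map_{\sSpc}(\Delta^n, C \coproduct^A X) \to \Map_{\sSpc}(N, C \coproduct^A X)$ has contractible fibers over those $f\colon N \to C \coproduct^A X$ with $f(J \setminus \{\bot,\top\}) \subset (C \coproduct^A X)_0 \setminus C_0$. First I would record the levelwise decomposition of the pushout: since $A \subset X$ is full, $A_n \hookrightarrow X_n$ is a monomorphism with complement, say, $X_n \setminus A_n$, and hence $(C \coproduct^A X)_n = C_n \sqcup (X_n \setminus A_n)$ compatibly in $n$ (this is the same bookkeeping already used for $\Nerve_n\Bcal = \Nerve_n\Acal \sqcup Q_n$ and $P_n = \Nerve_n\Ccal \sqcup Q_n$ earlier). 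In particular, an $m$-simplex of $C \coproduct^A X$ all of whose vertices lie outside $C_0$ is the same thing as an $m$-simplex of $X$ all of whose vertices lie outside $A_0$.

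The key step is then to analyze an individual fiber. Fix $f\colon N = ([n],J) \to C \coproduct^A X$ sending no inner joint into $C_0$. The idea is that each maximal simplex $\Delta^{n_i} \subset N$ has its two endpoints at joints of $N$, so those endpoints are either at $\bot$/$\top$ or at an inner joint; in the latter case they avoid $C_0$. I would split into the possible configurations: if \emph{no} joint of $N$ is sent into $C_0$ at all (including $\bot$, $\top$), then by the decomposition above the whole necklace $f$ factors through $X$, landing outside $A_0$ on all inner joints, and the fiber is contractible because $X$ is Segal away from $A$ — and one checks that the extension to $\Delta^n$ again lands in $X$ outside $A$, hence in $C \coproduct^A X$ without interference from $C$. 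If $\bot$ and/or $\top$ \emph{is} sent into $C_0$, I would use the monoidal/joint-wise structure to break $N$ along its joints: the maximal simplices incident to such an endpoint still have their other endpoint at an inner joint, hence outside $C_0$, and so again land in $X$; the pieces are glued along the points $f(j)$, and the extension problem $\Map(\Delta^n, -) \to \Map(N, -)$ is an iterated fiber product (in $\Ar(\Spc)$) over these joint vertices of the extension problems for the maximal simplices, exactly as in the argument of \Cref{obs:Segal-away-Delta-n}. Each of those is contractible — either vacuously because $\Delta^{n_i}$ is already a simplex (so $d_i$ is an isomorphism), or by Segal-ness of $X$ away from $A$ — so the total fiber is contractible.

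The main obstacle I anticipate is making precise that the extension from $N$ to $\Delta^n$ does not accidentally force some vertex or simplex to ``cross over'' between the $C_n$ and $X_n \setminus A_n$ summands in a way that breaks the argument: one must check that, given the hypothesis on inner joints, every maximal simplex $\Delta^{n_i} \subset N$ lands entirely in one summand — those with both endpoints outside $C_0$ land in $X$ (outside $A$), and those touching $C_0$ at an endpoint still land in $X$ on all their \emph{inner} vertices. The clean way to handle this is to observe that the vertices of $\Delta^n$ that are \emph{not} joints of $N$ are exactly the inner vertices of the maximal simplices, none of which is constrained, so the extension is governed purely by the $X$-side, and the only role of $C$ is to record the values at those joints mapping into $C_0$, which are already fixed by $f$. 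With that observation in hand, the fiber identification is formal: writing $d\colon N \to \Delta^n$ as $d_1 \vee \dots \vee d_k$ and $d^*$ as the corresponding iterated pullback, contractibility of each $d_i^*$-fiber gives contractibility of the $d^*$-fiber, which is precisely the assertion that $C \coproduct^A X$ is Segal away from $C$.
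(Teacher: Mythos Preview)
Your approach is essentially the paper's, but the case split on whether $\bot$/$\top$ lands in $C_0$ is unnecessary, and your case 2 contains a genuine confusion. The claim that the extension problem $\Map(\Delta^n,-) \to \Map(N,-)$ decomposes as an iterated fiber product of ``extension problems for the maximal simplices'', and hence that one can write $d\colon N \to \Delta^n$ as $d_1 \vee \dots \vee d_k$, is wrong: the target $\Delta^n$ is a single simplex, so there is no nontrivial $\vee$-decomposition of $d$. The decomposition in \Cref{obs:Segal-away-Delta-n} is along the \emph{target} necklace $N'$, not the source; when $N' = \Delta^n$ it yields nothing.

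The fix is simply that your case 1 argument already covers case 2. Even if $f(\bot)$ or $f(\top)$ lies in $C_0$, every maximal simplex $\Delta^{n_i}$ of $N$ (for $k \geq 2$) has at least one endpoint at an inner joint, hence outside $C_0$, so it lies in $Q_{n_i}$ and lifts uniquely to $X_{n_i}$. These glue to a unique lift $\tilde f \colon N \to X$ with no inner joint in $A_0$, and any extension of $f$ to $\Delta^n$ in $C \coproduct^A X$ lies in $Q_n$ (it has a vertex outside $C_0$) and hence corresponds to an extension of $\tilde f$ in $X$; now invoke the hypothesis. The paper does exactly this without the case split: it identifies both the subanima of $\Map(N,X)$ with no inner joint in $A$ and the subanima of $\Map(N, C\coproduct^A X)$ with no inner joint in $C$ with the same iterated fiber product $Q_{n_1} \times_{Q_0} Q'_{n_2} \times_{Q_0} \cdots \times_{Q_0} Q'_{n_{k-1}} \times_{Q_0} Q_{n_k}$, and likewise $Q_n$ on the $\Delta^n$ side, so the restriction map and its fibers are literally unchanged.
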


\begin{proof}
    We can make a level-wise decomposition $X_n = A_n \sqcup Q_n$.
    Write $N=\Delta^{n_1}\vee \dots \vee \Delta^{n_k}$.
    Then the anima of necklace maps $N \to X$ that send no inner joints to $A$ is 
    \begin{equation*}
        Q_{n_1} \crosslimits_{X_0} Q_{n_2}' \crosslimits_{X_0} \dots \crosslimits_{X_0} Q_{n_{k-1}}' \crosslimits_{X_0} Q_{n_k} 
        \subset X_{n_1} \crosslimits_{X_0} X_{n_2} \crosslimits_{X_0} \dots \crosslimits_{X_0} X_{n_k} = \Map_\sSpc(N, X) \period
    \end{equation*}
    After taking the pushout we still have a decomposition $(C \coproduct^A X)_n = C_n \sqcup Q_n$ and so the anima of necklaces in $P=C \coproduct^A X$ that send no inner vertices to $C$ is 
    \begin{equation*}
        Q_{n_1} \crosslimits_{(C \coproduct^A X)_0} Q_{n_2}' \crosslimits_{(C \coproduct^A X)_0} \dots \crosslimits_{(C \coproduct^A X)_0} Q_{n_{k-1}}' \crosslimits_{(C \coproduct^A X)_0} Q_{n_k} 
        \subset \Map_\sSpc(N, C \coproduct^A X) \period
    \end{equation*}
    In both cases the fiber products over $X_0$ or $(C \coproduct^A X)_0$ really only map to the subanima $Q_0$, so that the map $X \to C \coproduct^A X$ identifies these two subanima of $\Map_\sSpc(N, X)$ and $\Map_\sSpc(N, C \coproduct^A X)$.
    As it also identifies the subanima of
    \begin{equation*}
    	Q_n \subset \Map(\Delta^n, X) \andeq Q_n \subset \Map(\Delta^n, C \coproduct^A X) \comma
    \end{equation*}
    the claim follows from \cref{obs:Segal-away-Delta-n}.
\end{proof}

\begin{example}\label{ex:Segal-away}
	\hfill
    \begin{enumerate}
        \item If $ X $ is Segal, then $ X $ is Segal away from any full simplicial subanima $A \subset X $, since the map $N \to \Delta^n$ is local with respect to Segal anima.
        In fact, $ X $ is Segal if and only if it is Segal away from $\emptyset$.

        \item If $P= \Nerve(\Bcal) \coproduct^{\Nerve(\Acal)} \Nerve(\Ccal)$ as before, then $P$ is Segal away from $\Nerve(\Ccal)$.
        This follows from \cref{lem:Segal-away-pushout} because, by the previous example, $\Nerve \Bcal$ is Segal away from $\Nerve \Acal$.

        \item\label{it:double-Segal-away} If $A \subset X$ is a full simplicial subanima such $ X $ is Segal away from $A$, then $X \coproduct^A X$ is Segal away from $A$.
        Indeed, if $f\colon N \to X \coproduct^A X$ is a necklace sending no inner joints to $A$, then $ f $ must land entirely in one of the copies of $ X $.
        In other words, $ f $ does not send any inner joints to the other copy $X \subset X \coproduct^A X$, but by \cref{lem:Segal-away-pushout} $X \coproduct^A X$ is Segal away from $ X $, so we are done.
    \end{enumerate}
\end{example}


\subsection{Constrained necklaces and fold maps}

Under the assumption of Segalness away from $A \subset X$ we have a concrete interpretation of the colimit of the $\Nec$-presheaf $\Map_{**}^{(A)}(-, X)$.
This involves the fold/codiagonal map
\begin{equation*}
    \nabla_X \colon X \coproduct^A X \longrightarrow X \period
\end{equation*}

\begin{lemma}\label{lem:Map(A)=fold}
	Let $ X $ be a simplicial anima and let $A \subset X$ be a full simplicial subanima such that $ X $ is Segal away from $A$.
    Let $x_L, x_R \in (X \coproduct^A X)_0$ be vertices such that $x_L$ is in the left copy of $ X $ and $x_R$ is in the right copy of $ X $.
    Then there are equivalences
    \begin{equation*}
    	\begin{tikzcd}
    		\colim_{N \in \Necop} \Map_{**}^{(A)}(N, (X \coproduct^A X,x_L,x_R)) \arrow[r, "\sim"{yshift=-0.25em}, "\nabla_X"'] \arrow[d, "\wr"{xshift=-0.15em}] & \colim_{N \in \Necop} \Map_{**}^{(A)}(N, (X,x_L,x_R)) \\ 
    		\Map_{\ac(X) \coproduct^{\ac(A)} \ac(X)}(x_L, x_R) & \phantom{\colim_{N \in \Necop} \Map_{**}^{(A)}(N, (X,x_L,x_R))} \period
    	\end{tikzcd}
    \end{equation*}
\end{lemma}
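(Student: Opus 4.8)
The statement bundles two equivalences: the vertical one, and the horizontal one induced by the fold map $\nabla_X$; the first is the easy one. For the vertical equivalence I would first observe that, because $x_L$ lies in the left copy and $x_R$ in the right copy, \emph{every} bipointed map $N \to X \coproduct^A X$ is automatically $A$-constrained: if no joint of $N$ were sent into $A_0$, then by the argument of the last item of \cref{ex:Segal-away} every maximal simplex of $N$ would lie in a single copy of $X$, and these copies would be forced to agree across the (non-$A$) joints, so all of $N$ would lie in one copy of $X$ --- impossible, since its endpoints are $x_L$ and $x_R$. Hence the monomorphism $\Map_{**}^{(A)}(-, X \coproduct^A X) \hookrightarrow \Map_{**}(-, X \coproduct^A X)$ of presheaves on $\Nec$ is an equivalence. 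Since $\ac$ is a left adjoint it preserves the pushout, and since $A$ is full its associated category $\ac(A)$ is the full subcategory of $\ac(X)$ on $A_0$; thus $\ac(X \coproduct^A X) \simeq \ac(X) \coproduct^{\ac(A)} \ac(X)$, and the main theorem of \cite{arXiv:2301.0865} identifies $\colim_{N \in \Necop} \Map_{**}(N, (X \coproduct^A X, x_L, x_R))$ with $\Map_{\ac(X) \coproduct^{\ac(A)} \ac(X)}(x_L, x_R)$. This is the vertical map, so it is an equivalence.

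The horizontal equivalence is where the hypothesis ``Segal away from $A$'' does its work. I would write $\Nec_{x, y}^{(A)}(Y)$ for the unstraightening of the presheaf $\Map_{**}^{(A)}(-, (Y, x, y))$ on $\Nec$, so that its realization computes the relevant colimit, and first prove that, when $Y$ is Segal away from $A$, the full subcategory $\Nec_{x, y}^{(A),\mathrm{red}}(Y) \subset \Nec_{x, y}^{(A)}(Y)$ spanned by the \emph{$A$-reduced} necklaces --- those all of whose inner joints land in $A_0$ --- is reflective: the reflection contracts the inner joints lying outside $A_0$ one at a time, which is well defined and functorial precisely by the unique-extension property defining ``Segal away from $A$'', and it comes equipped with the tautological unit from the identity. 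Consequently the inclusion is an equivalence on realizations, both for $Y = X$ and, by the last item of \cref{ex:Segal-away}, for $Y = X \coproduct^A X$; and since $\nabla_X$ visibly intertwines the two reflections (note that $\nabla_X^{-1}(A_0) = A_0$), the horizontal equivalence reduces to the claim that $\nabla_X$ induces an equivalence on realizations of the subcategories of $A$-reduced necklaces.

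On $A$-reduced necklaces the picture is rigid: each maximal simplex of a necklace in $X \coproduct^A X$ lies in a single copy of $X$ (there are no inner joints outside $A_0$ to glue the copies), the first maximal simplex lies in the left copy and the last in the right one, and the intermediate maximal simplices carry an independent left/right label that $\nabla_X$ forgets. The observation that should make this label homotopically irrelevant is that a maximal simplex with both endpoints in $A_0$ contributes to the resulting morphism of $\ac(X) \coproduct^{\ac(A)} \ac(X)$ only its composite, which is a morphism between objects of $\ac(A)$ and hence --- by full faithfulness of $\ac(A) \hookrightarrow \ac(X)$ --- already lies in $\ac(A)$ and is shared by the two copies. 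Making this precise --- for instance by exhibiting $\nabla_X$ on $A$-reduced necklaces as a localization, or by producing enough zigzags of necklace maps (splitting and re-merging an intermediate simplex through a non-$A$ vertex, which temporarily leaves the reduced subcategory but is harmless since that subcategory is reflective) to connect any two labellings of a given necklace in the realization --- is the step I expect to be the main obstacle; everything preceding it is formal, and once it is in place the two triangles of the displayed diagram commute by construction.
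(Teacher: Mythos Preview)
Your treatment of the vertical equivalence is correct and matches the paper. Your reduction step for the horizontal equivalence --- that the full subcategory of $A$-reduced necklaces (all inner \emph{joints} in $A_0$; the paper writes $\Nec^{[A]}_{x,y}(Y)$ for this) is reflective in $\Nec^{(A)}_{x,y}(Y)$ whenever $Y$ is Segal away from $A$ --- is also correct and is one of the two adjunctions in \cref{lem:MapA=Map(A)}.

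The gap is exactly where you place it, and neither of your proposed fixes works. On $\Nec^{[A]}$ the fold map is far from an equivalence: its fiber over a given necklace in $X$ is the finite set of left/right labellings of those intermediate maximal simplices not lying entirely in $A$, so it is not a localization. Your zigzag of splitting an intermediate simplex at a non-$A$ vertex does not change any label --- both pieces after the split still lie in the same copy of $X$, no morphism of necklaces over $X \coproduct^A X$ can move a simplex between copies unless that simplex already lies in $A$, and reflecting back into $\Nec^{[A]}$ simply undoes the split. The paper supplies the missing idea: restrict one step \emph{further}, to the full subcategory $\Nec^{A}_{x,y}(Y) \subset \Nec^{[A]}_{x,y}(Y)$ of necklaces sending \emph{every inner vertex} (not merely every inner joint) into $A_0$. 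This inclusion admits a right adjoint for any full $A \subset Y$ --- pass to the subnecklace spanned by $\{\bot,\top\} \cup f^{-1}(A_0)$ --- so it too is a weak equivalence. On $\Nec^A$ every intermediate maximal simplex has all its vertices in $A_0$ and hence, by fullness, lies in $A$; only the first and last maximal simplices can fail to lie in $A$, and their copy is forced by $x_L$ and $x_R$. Consequently $\nabla_X \colon \Nec^A_{x_L,x_R}(X \coproduct^A X) \to \Nec^A_{x_L,x_R}(X)$ is an equivalence of \categories fiberwise over $\Nec$, and the horizontal equivalence follows. In short, the move that dissolves the labels is not to split at non-$A$ vertices but to delete them.
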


To prove this we need some auxilliary versions of the category of necklaces in $ X $.

\begin{definition}
    For a simplicial anima $ X $, a full simplicial subanima $A \subset X$, and $x, y \in X_0 \setminus A_0$ we define the full subcategories
    \begin{equation*}
        \Nec_{x,y}^{A}(X) \subset
        \Nec_{x,y}^{[A]}(X) \subset
        \Nec_{x,y}^{(A)}(X) \subset
        \Nec_{x,y}(X)
    \end{equation*}
    where, as illustrated in \cref{fig:restricted-necklaces}, an object $(f\colon N \to X) \in \Nec_{x,y}(X)$ is:
    \begin{enumerate}
        \item In $\Nec_{x,y}^{(A)}(X)$ if it maps at least one joint to $A$, i.e., $\Joints(N) \cap f^{-1}(A) \neq \emptyset$.

        \item In $\Nec_{x,y}^{[A]}(X)$ if $f \in \Nec_{x,y}^{(A)}(X)$ and $ f $ maps all inner joints to $A$, i.e., $f(\Joints(N) \setminus \{\bot, \top\}) \subset A$.

        \item In $\Nec_{x,y}^A(X)$ if $f \in \Nec_{x,y}^{(A)}(X)$ and $ f $ maps all inner vertices to $A$, i.e., $f(N_0\setminus \{\bot,\top\}) \subset A$.
    \end{enumerate}
    Note that $\Nec_{x,y}^{(A)}(X) \to \Nec$ is the right fibration that straightens to the presheaf $\Map_{\Nec}^{(A)}(-, (X,x,y))$.
\end{definition}

\begin{figure}[h]
    \centering
    \includegraphics[width = .7\linewidth]{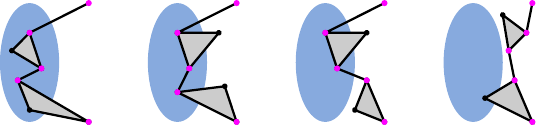}
    \caption{Four necklaces that are in $\Nec_{x,y}^A(X)$, $\Nec_{x,y}^{[A]}(X)$, $\Nec_{x,y}^{(A)}(X)$ and $\Nec_{x,y}(X)$, respectively. (They are chosen such that each of them is not in any of the smaller subcategories.)}
    \label{fig:restricted-necklaces}
\end{figure}

\begin{lemma}\label{lem:MapA=Map(A)}
    For any full simplicial subanima $A \subset X$ the full inclusion
    \begin{equation*}
        \Nec_{x,y}^{A}(X) \hookrightarrow \Nec_{x,y}^{[A]}(X)
    \end{equation*}
    admits a right adjoint.
    If we additionally assume that $ X $ is Segal away from $A$, then the full inclusion
    \begin{equation*}
        \Nec_{x,y}^{[A]}(X) \hookrightarrow \Nec_{x,y}^{(A)}(X)
    \end{equation*}
    admits a left adjoint.
    In particular, under these assumptions, both inclusions are weak equivalences.
\end{lemma}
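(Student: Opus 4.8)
The plan is to construct the right adjoint and the left adjoint explicitly, by surgery on the bead decomposition $N = \Delta^{n_1} \vee \dots \vee \Delta^{n_k}$ of a necklace, and then to conclude that the inclusions are weak equivalences from the standard fact that a functor admitting an adjoint on either side becomes an equivalence after applying $\abs{-} \colon \fromto{\Catinfty}{\Spc}$ (a natural transformation induces a homotopy on realizations, so an adjunction realizes to an equivalence of anima).

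For the first inclusion $\Nec^A_{x,y}(X) \hookrightarrow \Nec^{[A]}_{x,y}(X)$ I would produce the right adjoint by \emph{trimming beads}. Given an object $(f\colon N \to X) \in \Nec^{[A]}_{x,y}(X)$ with joints $0 = a_0 < a_1 < \dots < a_k = n$ — note $k \geq 2$, since $x, y \notin A_0$ forces at least one inner joint to land in $A_0$ — set $S_i \coloneqq \{a_{i-1}\} \cup \bigl(f^{-1}(A_0) \cap (a_{i-1}, a_i)\bigr) \cup \{a_i\}$ and let $M \coloneqq \Delta^{S_1} \vee \dots \vee \Delta^{S_k} \subseteq N$ be the subnecklace keeping the joints of $N$ but cutting each bead down to the vertices that map into $A_0$. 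Then $R(f) \coloneqq (M \hookrightarrow N \xrightarrow{f} X)$ lies in $\Nec^A_{x,y}(X)$, because each inner vertex of $M$ is either an inner joint of $N$ (in $f^{-1}(A_0)$ since $f \in \Nec^{[A]}_{x,y}(X)$) or a surviving inner vertex of a bead (in $f^{-1}(A_0)$ by construction). The counit is the inclusion $M \hookrightarrow N$ over $X$.

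For the second inclusion $\Nec^{[A]}_{x,y}(X) \hookrightarrow \Nec^{(A)}_{x,y}(X)$ — the only place where Segalness away from $A$ is used — I would produce the left adjoint by \emph{merging beads at bad joints}. Given $(f\colon N \to X) \in \Nec^{(A)}_{x,y}(X)$, delete from the joint set of $N$ every inner joint $a_i$ with $f(a_i) \notin A_0$; this yields a necklace $N' \supseteq N$ with the same vertices but the relevant beads merged. Since we only delete joints not mapping into $A_0$, the hypothesis that $X$ is Segal away from $A$ applies to the identity-on-vertices inclusion $N \hookrightarrow N'$, so (by the reformulation in \Cref{obs:Segal-away-Delta-n}) the restriction $\fromto{\Map_{\sSpc}(N', X)}{\Map_{\sSpc}(N, X)}$ has a contractible fiber over $f$; let $f'\colon N' \to X$ be the resulting extension. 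Then $L(f) \coloneqq (N' \xrightarrow{f'} X)$ lies in $\Nec^{[A]}_{x,y}(X)$ — its inner joints all map into $A_0$, and the joint of $N$ witnessing membership in $\Nec^{(A)}$ is inner, hence undeleted — and the unit is the inclusion $N \hookrightarrow N'$ over $X$.

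Since both inclusions are fully faithful, the two universal properties reduce to checking that (pre/post)composition with the (co)unit is an equivalence on mapping anima. For this I would use that $\fromto{\Nec_{x,y}(X)}{\Nec}$ is the right fibration classifying $\Map_{**}(-, (X,x,y))$, so that $\Map_{\Nec_{x,y}(X)}\bigl((M', g), (K, h)\bigr)$ is the disjoint union, over necklace maps $\beta\colon M' \to K$, of the anima of paths from $g$ to $h \circ \beta$ in $\Map_{**}(M', (X,x,y))$. As $M \hookrightarrow N$ and $N \hookrightarrow N'$ are monomorphisms of simplicial sets, the induced reindexing maps on necklace maps are injective and the matching path-summands agree on the nose, so everything comes down to the combinatorial claim that a necklace map $\beta\colon M' \to N$ with $g \simeq f \circ \beta$ for some $(M', g) \in \Nec^A_{x,y}(X)$ must factor through $M \hookrightarrow N$ (together with its dual for the left adjoint). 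This holds because $\beta$ carries each bead of $M'$ into a single bead of $N$ — every simplex of a necklace has all its vertices in one bead — and carries the inner vertices of $M'$ into $f^{-1}(A_0)$, since their $g$-images, hence their $(f\circ\beta)$-images, lie in $A_0$ and $A_0 \subseteq X_0$ is a union of components; thus the image of each bead of $M'$ already lies in the trimmed bead. I expect this combinatorial–homotopical bookkeeping — most of all showing that an arbitrary necklace map into or out of one of these restricted necklace categories is automatically compatible with the trimming or merging — to be the main obstacle; for the left adjoint one additionally reuses the contractible-fiber property to transport the homotopies across $N \hookrightarrow N'$, which is where Segalness away from $A$ is indispensable a second time.
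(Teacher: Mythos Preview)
Your proposal is correct and follows essentially the same approach as the paper. Both construct the right adjoint by restricting to the subnecklace on the vertex set $\{\bot,\top\}\cup f^{-1}(A_0)$ and the left adjoint by deleting the inner joints outside $A_0$ and extending via Segalness away from $A$; the verification of the universal properties differs only cosmetically (the paper organizes the left-adjoint check via a fiber-sequence diagram, whereas you decompose the mapping anima summand-by-summand over $\Map_{\Nec}$, but the underlying combinatorics of joints and the second use of the contractible-fiber property are identical).
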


\begin{figure}[h]
    \centering
    \includegraphics[width = .8\linewidth]{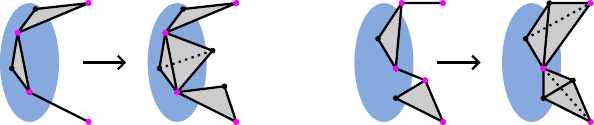}
    \caption{The left arrow depicts the counit of the colocalization $\Nec_{x,y}^A(X) \rightleftarrows \Nec_{x,y}^{[A]}(X)$.
    The right arrow depicts the unit of the localization $ \Nec_{x,y}^{(A)}(X) \rightleftarrows \Nec_{x,y}^{[A]}(X)$.}
    \label{fig:adjoints}
\end{figure}

\begin{proof}
    The right adjoint to the first inclusion is defined by sending $(f\colon N = ([n], J) \to X)$ to $(\restrict{f}{N'}\colon N' \to X)$ where $N' \subset N$ is the full subnecklace spanned by the vertices $\{\bot, \top\} \cup f^{-1}(A)$. 
    This is well-defined because we know that $ f $ already sends all inner joints to $A$, so all of the joints in $N$ are still contained in $N'$.
    \cref{fig:adjoints} depicts both the counit of this adjunction and the unit of the adjunction we are about to construct.

    Now assume that $ X $ is Segal away from $A$.
    For any $(f\colon N = ([n], J) \to X) \in \Nec_{x,y}^{(A)}(X)$ we need to construct a localization onto $\Nec_{x,y}^{[A]}(X)$.
    Let
    \begin{equation*}
    	J_0 \colonequals \{\bot, \top\} \cup (f^{-1}(A) \cap J) \subset J
    \end{equation*}
    be the set of the two extremal vertices and those inner joints that are mapped to $A$.
    We can define a new necklace by $N' \coloneq ([n], J_0)$ and the identity on $[n]$ induces an inclusion $\iota \colon N \hookrightarrow N'$.
    Because $ X $ is Segal away from $A$ the map $f\colon N \to X$ extends uniquely a map $f'\colon N' \to X$.
    %
    We thus have a map 
    \begin{equation*}
        \iota\colon (f\colon N \to X) \longrightarrow (f'\colon N' \to X)
    \end{equation*}
    in $\Nec_{x,y}^{(A)}(X)$
    and $(f'\colon N' \to X) \in \Nec_{x,y}^{[A]}(X)$ because we have forgotten all inner joints of $N$ that did not map to $A$.

    It remains to check that $f'$ is indeed a localization of $ f $. 
    We need to show that for any object in the subcategory $(g\colon L \to X) \in \Nec_{x,y}^{[A]}(X)$, precomposition by $\iota$ induces an equivalence of mapping anima into $g$.
    We can write these mapping anima as fibers, so that the map in question is the left map in the diagram
	\begin{equation*}
		\begin{tikzcd}[sep=3em]
			\Map_{\Nec_{x,y}(X)}(f', g) \arrow[r] \arrow[d, "-\circ \iota"'] & \Map_{\Nec}(N', L) \arrow[r, "g \of -"] \arrow[d, "-\circ \iota"{description}] & \Map_{\sSpc}(N', X) \arrow[d, "-\circ \iota"] \\
			\Map_{\Nec_{x,y}(X)}(f, g) \arrow[r] & \Map_{\Nec}(N, L) \arrow[r, "g \of -"'] & \Map_{\sSpc}(N, X)
		\end{tikzcd}
	\end{equation*}
	whose horiztonal lines are fiber sequences.
    Consider a morphism $f \to g$ given by $d\colon N \to L$ and a homotopy $g \circ d \simeq f$.
    The middle map is injective because $N \hookrightarrow N'$ is an epimorphism in $\Nec$.
    Its fiber over $d$ is nonempty because if $j\in N$ is a joint with $f(j) \simeq g(d(j)) \not\in A$, then $d(j) \in L$ cannot be an inner joint of $L$ (as the inner joints of $L$ are sent to $A$) and thus $d$ also yields a well-defined map $N' \to L$ after deleting those joints.
    The right map has contractible fibers on the component of $ f $ because we showed that $ f $ extends uniquely (as $ X $ is Segal away from $A$).
    This shows that the left map has a contractible fiber over $d\colon f\to g$; since $d$ was arbitary, we conclude that the left map is an equivalence.
\end{proof}

With this tool in hand we are ready to prove \cref{lem:Map(A)=fold}.

\begin{proof}[Proof of \cref{lem:Map(A)=fold}]
    First, we note that for a necklace in $X \coproduct^A X$ to start at $x_L$ and end at $x_R$ it must have an at least one joint in $A_0$.
    Either $x_L \in A_0$, or $x_R \in A_0$, or they are in distinct copies of $X_0 \setminus A_0$ and to get from one to the other we need a joint in $A_0$ as every maximal simplex of the necklace must entirely map to one copy of $ X $.
    In formulas we have
    \begin{equation*}
        \Map_{**}^{(A)}(-, (X \coproduct^A X,x_L,x_R)) = \Map_{**}(-, (X \coproduct^A X,x_L,x_R)) \period
    \end{equation*}
    After taking colimits, by the main theorem of \cite{arXiv:2301.0865}, the right term computes the mapping anima
    \begin{equation*}
        \colim_{N \in \Necop} \Map_{**}(N, (X \coproduct^A X, x_L, x_R)) 
        \simeq \Map_{\ac(X \coproduct^A X)}(x_L, x_R) \period
    \end{equation*}
    Since $ \ac(X \coproduct^A X) \simeq \ac(X) \coproduct^{\ac(A)} \ac(X) $, this proves the horizontal equivalence in the statement of \cref{lem:Map(A)=fold}.

    To show the vertical equivalence, we use that we can compute the colimits by unstraightening and inverting all morphisms.
    In these terms, we need to show that the right vertical map in the square
    \begin{equation*}
	    \begin{tikzcd}
	    	\Nec_{x_L,x_R}^{A}(X\coproduct^A X) \arrow[r, hooked] \arrow[d, "\nabla_X"'] & \Nec_{x_L,x_R}^{(A)}(X\coproduct^A X) \arrow[d, "\nabla_X"] \\
	    	\Nec_{x_L,x_R}^{A}(X) \arrow[r, hooked] & \Nec_{x_L,x_R}^{(A)}(X)
	    \end{tikzcd}
    \end{equation*}
    is a weak equivalence.
    Since $ X $ is Segal away from $A$ and $X \coproduct^A X$ is Segal away from $A$ by \cref{ex:Segal-away}, \cref{lem:MapA=Map(A)} shows that the horizontal maps are weak equivalences.
    It remains to show that the left map is a weak equivalence, and in fact we show that it is an equivalence of \categories.

    The left functor is obtained by restricting the map of right fibrations
    \begin{equation*}
    	\Nec_{x_L, x_R}(X \coproduct^A X) \to \Nec_{x_L, x_R}(X)
    \end{equation*}
    over $\Nec$ to certain full subcategories.
    Therefore, it suffices to check that it induces an equivalence on the fibers over each $N \in \Nec$. 
    The induced map on fibers is the map
    \begin{equation*}
        \nabla_X \circ -\colon \Map_{**}^{A}(N, (X \coproduct^A X, x_L, x_R)) \longrightarrow \Map_{**}^{A}(N, (X, x_L, x_R)) \period
    \end{equation*}
    If $N = \Delta^{n_1} \vee \dots \vee \Delta^{n_k}$ then because all inner vertices have to map to $\Acal$ both sides evaluate to (here we use that $A \subset X$ and $A \subset X \coproduct^A X$ are full)
    \begin{equation*}  
        \{x_L\} \crosslimits_{X_0} X_{n_1}
        \crosslimits_{X_{n_1-1}} A_{n_1-1}
        \crosslimits_{X_0} A_{n_2}
        \crosslimits_{X_0} 
        \cdots
        \crosslimits_{X_0} A_{n_{k-1}}
        \crosslimits_{X_0} A_{n_k-1}
        \crosslimits_{X_{n_k-1}} X_{n_k}
        \crosslimits_{X_0} \{x_R\}
    \end{equation*}
    and the map $\nabla_X \circ -$ described above is an equivalence, completing the proof.
\end{proof}


\subsection{Proof of \texorpdfstring{\Cref{prop:fold-pushout}}{Proposition \ref*{prop:fold-pushout}}}

By \cref{lem:Map(A)-pushout} the lower square in the diagram 
\begin{equation*}
	\begin{tikzcd}[sep=2.5em]
		\colim_{N \in \Necop} \Map_{**}(N, (\Nerve\Bcal \coproductlimits^{\Nerve\Acal} \Nerve\Bcal, b_0^L, b_1^R)) \arrow[d, "\nabla", "\wr"'{xshift=0.15em}] \arrow[r] & \colim_{N \in \Necop} \Map_{**}(N, (P \coproductlimits^{\Nerve\Ccal} P, g(b_0^L), g(b_1^R)))  \arrow[d, "\nabla", "\wr"'{xshift=0.15em}] \\
		\colim_{N \in \Necop} \Map_{**}^{(\Acal)}(N, (\Nerve\Bcal, b_0, b_1)) \arrow[r] \arrow[d] \arrow[dr, "\ulcorner", very near end, phantom] & \colim_{N \in \Necop} \Map_{**}^{(\Ccal)}(N, (P, g(b_0), g(b_1)) \arrow[d] \\ 
		\Map_\Bcal(b_0, b_1) \arrow[r] &  \Map_\Dcal(g(b_0), g(b_1))
	\end{tikzcd}
\end{equation*}
is a pushout square and by \cref{lem:Map(A)=fold} the vertical maps in the top square are equivalences.
The top-most terms compute mapping anima in the associated categories.
Because $\ac(-)$ commutes with colimits we compute
\begin{equation*}
    \ac(P \coproductlimits^{\Nerve\Ccal} P) 
    \simeq \ac(P) \coproductlimits^{\ac(\Nerve\Ccal)} \ac(P) 
    \simeq \Dcal \coproductlimits^{\Ccal} \Dcal \period
\end{equation*}
Therefore we can rewrite the outside square in the above diagram as
\begin{equation*}
	\begin{tikzcd}[sep=2.5em]
		\Map_{\Bcal \coproductlimits^{\Acal} \Bcal}(b_0^L, b_1^R) \arrow[d, "\nabla"'] \arrow[r] \arrow[dr, "\ulcorner", very near end, phantom] & \Map_{\Dcal \coproductlimits^{\Ccal} \Dcal}(g(b_0^L), g(b_1^R))) \arrow[d, "\nabla"] \\
		\Map_\Bcal(b_0, b_1) \arrow[r] & \Map_\Dcal(g(b_0), g(b_1))
	\end{tikzcd}
\end{equation*}
which is exactly the pushout square claimed in \cref{prop:fold-pushout}. \hfill $ \qed $


\section{Examples and applications}\label{sec:examples_and_applications}

The goal of this section is to give sample applications and examples of the main results in this paper. 
\Cref{section:sieves} has examples involving pushouts along sieve inclusions, \cref{subsec:Dwyer_functors} concerns Dwyer functors, and \cref{subsec:pushout_products} concerns pushout products.
\Cref{subsec:Reedy_categories} is lengthier and concerns examples involving Reedy \categories.


\subsection{Sieves}\label{section:sieves}

In this example, we start by giving a simpler proof of the result of \Cref{section:fourth} when $ f \colon \incto{\Acal}{\Bcal} $ is a sieve inclusion. 
Recall that a sieve is a full subcategory $\Acal \subset \Bcal$ such that for every $f\colon b \to a$ in $\Bcal$ with $a \in \Acal$ we also have $b \in \Bcal$.

\begin{proposition}
    Let
    \begin{equation*}
        \begin{tikzcd}[column sep=2.5em]
            \Acal \arrow[r, "g"] \arrow[d, "f"', hooked] \arrow[dr, phantom, very near end, "\ulcorner", xshift=0.25em, yshift=-0.25em] & \Ccal \arrow[d, "\fbar", hooked] \\
            \Bcal \arrow[r, "\gbar"'] & \Dcal 
        \end{tikzcd}
    \end{equation*}  
    be a pushout square of \categories where $ f $ is fully faithful.
    Let $b,b'\in\Bcal$ and assume that $b$ does not map to any $a\in \Acal$.
    Then the map $\Map_{\Bcal}(b,b')\to \Map_{\Dcal}(\gbar(b),\gbar(b'))$ is an equivalence.  
\end{proposition}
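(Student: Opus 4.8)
The plan is to compare the necklace formula for $ \Map_{\Dcal}(\gbar(b),\gbar(b')) $ recalled in \cref{sec:computing_pushouts_via_necklaces} with the tautological necklace formula for $ \Map_{\Bcal}(b,b') $, and to use the hypothesis on $ b $ to show that no necklace from $ b $ to $ b' $ in $ P \coloneq \Nerve(\Bcal) \coproduct^{\Nerve(\Acal)} \Nerve(\Ccal) $ can leave $ \Nerve(\Bcal) $. Recall that $ \Dcal = \ac(P) $ and, since $ f $ is fully faithful, that there are levelwise decompositions $ \Nerve_n\Bcal = \Nerve_n\Acal \sqcup Q_n $ and $ P_n = \Nerve_n\Ccal \sqcup Q_n $ along which $ \Nerve_n\Bcal \to P_n $ is the identity on $ Q_n $. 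Write $ \Bcal^{\circ} \subseteq \Bcal $ for the full subcategory on the objects admitting no morphism to an object of $ \Acal $; this is a cosieve containing $ b $. If $ b' \notin \Bcal^{\circ} $, then $ \Map_{\Bcal}(b,b') = \emptyset $ --- a morphism $ b \to b' $ followed by one $ b' \to a $ would give $ b \to a $ --- and by the argument below every term of the necklace colimit computing $ \Map_{\Dcal}(\gbar(b),\gbar(b')) $ is empty as well; so assume from now on that $ b, b' \in \Bcal^{\circ} $.

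The crux is that the map
\begin{equation*}
    \Nec_{b,b'}(\Nerve\Bcal) \too \Nec_{b,b'}(P)
\end{equation*}
of right fibrations over $ \Nec $ induced by $ \Nerve\Bcal \to P $ is an equivalence. This can be checked fibrewise, i.e.\ one shows that for each necklace $ N = \Delta^{n_1} \vee \dots \vee \Delta^{n_k} $ the map $ \Map_{**}(N,(\Nerve\Bcal,b,b')) \to \Map_{**}(N,(P,b,b')) $ is an equivalence. To do this I would show that any necklace map $ \sigma \colon N \to P $ with $ \sigma(\bot) = b $ and $ \sigma(\top) = b' $ factors uniquely through $ \Nerve\Bcal $, by induction over the maximal simplices $ \sigma_1, \dots, \sigma_k $: the $ 0 $th vertex of $ \sigma_1 $ is $ b \in Q_0 $, which forces the point $ \sigma_1 \in P_{n_1} $ to lie in the summand $ Q_{n_1} $, hence to come (uniquely) from $ \Nerve_{n_1}\Bcal $; every vertex of $ \sigma_1 $ then lies in $ \Bcal^{\circ} $, since composing the morphism $ b \to \sigma_1(j) $ coming from $ \sigma_1 $ with a hypothetical morphism $ \sigma_1(j) \to a $ would produce $ b \to a $; in particular its last vertex, which is the $ 0 $th vertex of $ \sigma_2 $, again lies in $ Q_0 $, and we repeat. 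As $ \Nerve_{n_i}\Bcal \hookrightarrow P_{n_i} $ is a summand inclusion on $ Q_{n_i} $, the source $ \Map_{**}(N,(\Nerve\Bcal,b,b')) $ is a union of components of the target, and what we just showed is precisely that it is every component; so the map is an equivalence.

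It then remains to take realizations. By the main theorem of \cite{arXiv:2301.0865}, $ \abs{\Nec_{b,b'}(P)} \simeq \Map_{\ac(P)}(\gbar(b),\gbar(b')) = \Map_{\Dcal}(\gbar(b),\gbar(b')) $, while $ \abs{\Nec_{b,b'}(\Nerve\Bcal)} \simeq \Map_{\ac(\Nerve\Bcal)}(b,b') = \Map_{\Bcal}(b,b') $ because $ \Nerve\Bcal $ is (complete) Segal, so that $ \ac(\Nerve\Bcal) \simeq \Bcal $. Since $ \Nerve\Bcal \to P $ has $ \gbar $ as its image under $ \ac $, the realization of the equivalence above is exactly the map $ \Map_{\Bcal}(b,b') \to \Map_{\Dcal}(\gbar(b),\gbar(b')) $ in question, which is therefore an equivalence. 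The only genuine work is in the crux: checking that a necklace issued from an object with no map into $ \Acal $ is trapped inside $ \Nerve\Bcal $, and that this identification is natural in $ N \in \Necop $ so that it survives the colimit --- both routine once the decomposition $ P_n = \Nerve_n\Ccal \sqcup Q_n $ is in hand.
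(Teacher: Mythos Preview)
Your proof is correct but takes a substantially different and heavier route than the paper's. You invoke the necklace formula from \cref{sec:computing_pushouts_via_necklaces} and argue combinatorially that no necklace in $P$ starting at $b$ can leave the $Q$-summands, so that $\Nec_{b,b'}(\Nerve\Bcal) \to \Nec_{b,b'}(P)$ is an equivalence. The paper instead gives a short Yoneda argument: by the universal property of the pushout, the functors $\Map_{\Bcal}(b,-)\colon \Bcal \to \Spc$ and the constant functor $\emptyset\colon \Ccal \to \Spc$ (which agree on $\Acal$ precisely because $b$ admits no map to $\Acal$) glue to a functor $\widetilde{\Map}(b,-)\colon \Dcal \to \Spc$, and a direct computation shows $\Map(\widetilde{\Map}(b,-),F) \simeq F(\gbar(b))$ for all $F$, whence $\widetilde{\Map}(b,-) \simeq \Map_{\Dcal}(\gbar(b),-)$ by Yoneda. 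This avoids the Segalification machinery entirely, which is in fact the purpose of the proposition in the paper: it is placed in \cref{section:sieves} to illustrate that under the no-maps-to-$\Acal$ hypothesis the fourth mapping anima admits an elementary treatment independent of \cref{sec:computing_pushouts_via_necklaces}. Your argument does have the virtue of making the geometric picture transparent---the cosieve $\Bcal^{\circ}$ traps every necklace issued from $b$---but it relies on the main theorem of \cite{arXiv:2301.0865}, which is considerably stronger than what is needed here.
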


\begin{proof}
     Consider the functor $\widetilde{\Map}(b,-) \colon \Dcal\to \Spc$ defined on $\Bcal$ by $\Map_{\Bcal}(b,-)$, on $\Ccal$ by the constant functor $\emptyset$, and their unique equivalence on $\Acal$ (they \emph{are} equivalent on $\Acal$ by the assumption that $b$ has no map to any $a\in \Acal$). 

    Let $F \colon \Dcal\to\Spc$ be any functor. 
    Using the Yoneda lemma and the fact that a constant diagram with value the empty set is initial a presheaf \category, we compute
    \begin{align*}
    	\Map(\widetilde{\Map}(b,-),F) &\simeq \Map_{\Fun(\Bcal,\Spc)}(\Map_{\Bcal}(b,-),F\circ \gbar) \crosslimits_{\Map_{\Fun(\Acal,\Spc)}(\emptyset, F\circ \gbar f)}\Map_{\Fun(\Ccal,\Spc)}(\emptyset, F\circ \fbar) \\ 
    	&\simeq \Map_{\Fun(\Bcal,\Spc)}(\Map_{\Bcal}(b,-),F\circ g)\simeq F(g(b)) \period
    \end{align*} 
    By the Yoneda lemma again, it follows that $\widetilde{\Map}(b,-)\simeq \Map_{\Dcal}(g(b),-)$, and one easily checks that the desired map is the canonical one.
\end{proof}

\begin{corollary}\label{cor:sieve}
	Let 
	\begin{equation*}
		\begin{tikzcd}[column sep=2.5em]
		    \Acal \arrow[r, "g"] \arrow[d, "f"', hooked] \arrow[dr, phantom, very near end, "\ulcorner", xshift=0.25em, yshift=-0.25em] & \Ccal \arrow[d, "\fbar"] \\
		    \Bcal \arrow[r, "\gbar"'] & \Dcal \comma
		\end{tikzcd}
	\end{equation*}  
    be a pushout square of \categories where $ f $ is a sieve inclusion.
    Then for all $b,b'\in\Bcal$, exactly one of the two following situations happens:
    \begin{enumerate}
        \item We have $b\in \Acal$.
        In this case,
        \begin{equation*}
        	\Map_{\Dcal}(\gbar(b),\gbar(b')) \simeq \Map_{\Dcal}(\fbar(g(b)),\gbar(b'))\simeq |\Ccal_{g(b)/}\crosslimits_{\Ccal}\Acal\crosslimits_{\Bcal}\Bcal_{/b'}| \period
        \end{equation*}  

        \item We have $b\notin\Acal$.
        In which case $\Map_{\Dcal}(\gbar(b),\gbar(b'))\simeq \Map_{\Bcal}(b,b')$. 
    \end{enumerate}
\end{corollary}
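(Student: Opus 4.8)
The plan is to split along the stated dichotomy and feed each case into a result already established. The dichotomy itself is automatic: an object $b \in \Bcal$ either lies in the essential image of the sieve inclusion $f$ or it does not, and these alternatives are mutually exclusive, so there is nothing to prove for the ``exactly one'' clause.

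For case (1), where $b \in \Acal$, I would first use commutativity of the pushout square. Writing $b = f(a)$, the natural equivalence $\gbar f \simeq \fbar g$ gives $\gbar(b) = \gbar(f(a)) \simeq \fbar(g(a)) = \fbar(g(b))$, hence the first equivalence $\Map_{\Dcal}(\gbar(b),\gbar(b')) \simeq \Map_{\Dcal}(\fbar(g(b)),\gbar(b'))$. The second equivalence is then exactly item (3) of \Cref{cor:map3types}, applied with $c = g(b)$, which identifies $\Map_{\Dcal}(\fbar(g(b)),\gbar(b'))$ with $\abs{\Ccal_{g(b)/}\crosslimits_{\Ccal}\Acal\crosslimits_{\Bcal}\Bcal_{/b'}}$. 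Note this half of the argument only uses that $f$ is fully faithful, not that it is a sieve.

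For case (2), where $b \notin \Acal$, I would extract the sieve condition's contrapositive: if there were any morphism $b \to a$ in $\Bcal$ with $a \in \Acal$, then the sieve property would force $b \in \Acal$, a contradiction. Hence $b$ maps to no object of $\Acal$, and this is precisely the hypothesis of the Proposition immediately preceding the corollary, which yields that the canonical map $\Map_{\Bcal}(b,b') \to \Map_{\Dcal}(\gbar(b),\gbar(b'))$ is an equivalence.

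I do not expect a genuine obstacle here: the corollary is a direct bookkeeping consequence of \Cref{cor:map3types} and the preceding Proposition. The only points requiring a little care are correctly translating the sieve hypothesis into ``$b$ admits no morphism into $\Acal$'' so as to match the Proposition's hypothesis, and keeping the identification $\gbar f \simeq \fbar g$ straight when reindexing the mapping anima in case (1).
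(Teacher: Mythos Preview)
Your proposal is correct and matches the paper's intended argument: the corollary is stated without proof precisely because it is an immediate case split, using commutativity of the square together with \Cref{cor:map3types}(3) when $b\in\Acal$, and the preceding Proposition (via the contrapositive of the sieve condition) when $b\notin\Acal$. Your identification of the two ingredients and the care taken with the reindexing $\gbar(b)\simeq\fbar(g(b))$ are exactly what is needed.
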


We also obtain:

\begin{corollary}\label{cor:pushouts_of_sieves_is_union}
	Let $ \Ccal $ be \acategory and let $ \Ccal_0, \Ccal_1 \subset \Ccal $ be sieves.
	Then the natural square
	\begin{equation*}
        \begin{tikzcd}[column sep=2.5em]
            \Ccal_0 \intersect \Ccal_1 \arrow[r, hooked] \arrow[d, hooked] & \Ccal_1 \arrow[d, hooked] \\
             \Ccal_0 \arrow[r, hooked] &  \Ccal_0 \union \Ccal_1 
        \end{tikzcd}
    \end{equation*}  
    is a pushout square of \categories.
\end{corollary}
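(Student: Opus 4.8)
\emph{Proof strategy.} The plan is to deduce this directly from the criterion in \Cref{cor:ff-and-ff}. Write $\Ccal_0 \union \Ccal_1$ for the full subcategory of $\Ccal$ spanned by the objects lying in $\Ccal_0$ or in $\Ccal_1$, and $\Ccal_0 \intersect \Ccal_1$ for the full subcategory on the objects lying in both. With these conventions, every functor in the square is a full subcategory inclusion, hence fully faithful, and the square visibly commutes; so it remains only to verify conditions \textit{(1)} and \textit{(2)} of \Cref{cor:ff-and-ff}. Condition \textit{(1)}, joint surjectivity of $\Ccal_0 \hookrightarrow \Ccal_0 \union \Ccal_1 \hookleftarrow \Ccal_1$, holds by the very definition of $\Ccal_0 \union \Ccal_1$.

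For condition \textit{(2)}, fix objects $b$ and $c$ with $b \in \Ccal_0$, $b \notin \Ccal_1$, $c \in \Ccal_1$, $c \notin \Ccal_0$; since all inclusions in sight are fully faithful, these are exactly the objects of $\Ccal_0 \setminus (\Ccal_0 \intersect \Ccal_1)$ and $\Ccal_1 \setminus (\Ccal_0 \intersect \Ccal_1)$ in the sense of \Cref{cor:ff-and-ff}. The sieve conditions forbid all morphisms between such $b$ and $c$ in both directions: a morphism $b \to c$ in $\Ccal$ would force $b \in \Ccal_1$ because $\Ccal_1$ is a sieve and $c \in \Ccal_1$, while a morphism $c \to b$ would force $c \in \Ccal_0$ because $\Ccal_0$ is a sieve and $b \in \Ccal_0$. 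Hence $\Map_{\Ccal_0 \union \Ccal_1}(b,c) = \Map_{\Ccal}(b,c) = \emptyset$ and likewise $\Map_{\Ccal_0 \union \Ccal_1}(c,b) = \emptyset$. The same observation makes the relevant fiber products empty: an object of $(\Ccal_0)_{b/} \crosslimits_{\Ccal_0} (\Ccal_0 \intersect \Ccal_1) \crosslimits_{\Ccal_1} (\Ccal_1)_{/c}$ would consist of an object $a \in \Ccal_0 \intersect \Ccal_1$ together with morphisms $b \to a$ and $a \to c$, whose composite would be a (nonexistent) morphism $b \to c$; symmetrically $(\Ccal_1)_{c/} \crosslimits_{\Ccal_1} (\Ccal_0 \intersect \Ccal_1) \crosslimits_{\Ccal_0} (\Ccal_0)_{/b}$ is empty. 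Therefore both comparison maps in condition \textit{(2)} have the form $\abs{\emptyset} \to \emptyset$ and are equivalences, so \Cref{cor:ff-and-ff} applies and the square is a pushout.

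I do not expect any real obstacle here; the only point requiring a moment's care is to identify the notation $\Bcal \setminus \Acal$ of \Cref{cor:ff-and-ff} (objects not in the essential image of $\Acal$) with the set-theoretic complement of object sets, which is immediate because every functor involved is a full subcategory inclusion. One could alternatively phrase the argument through \Cref{cor:map3types} or \Cref{cor:sieve} after noting that $\Ccal_0 \intersect \Ccal_1 \subset \Ccal_0$ is again a sieve, but routing through \Cref{cor:ff-and-ff} avoids having to separately identify the abstract pushout with $\Ccal_0 \union \Ccal_1$.
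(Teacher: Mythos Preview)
Your proof is correct and follows essentially the same route as the paper: both invoke \Cref{cor:ff-and-ff} and observe that the cross-term mapping anima $\Map_{\Ccal}(b,c)$ and $\Map_{\Ccal}(c,b)$ are empty by the sieve property, so the comparison maps are automatically equivalences. The paper's version is terser---it simply notes that any map into an empty anima is an equivalence, without separately arguing that the fiber products are also empty---but the content is the same.
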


\begin{proof}
    By \cref{cor:ff-and-ff} it suffices to inspect the ``cross terms'' $\Map_{\Ccal} (c_0,c_1)$ and $\Map_{\Ccal}(c_1,c_0)$ for $ c_0\in\Ccal_0 \setminus \Ccal_0 \cap \Ccal_1$ and $ c_1\in\Ccal_1 \setminus \Ccal_0 \cap \Ccal_1$. 
    But these anima are empty because $\Ccal_0 $ and $ \Ccal_1 $ are sieves, and thus every map into these anima is an equivalence.
\end{proof}

\begin{remark}
	\Cref{cor:pushouts_of_sieves_is_union} can often be used to compute pushouts of posets.
\end{remark}


\subsection{Dwyer functors}\label{subsec:Dwyer_functors}
 
While studying the homotopy theory of $ 1 $-categories, Thomason introduced a class of fully faithful functors called \textit{Dwyer functors} \cite{MR591388}.
There, Thomason began the study of pushouts along Dwyer functors.

\begin{definition}\label{def:Dwyer}
	A functor $f \colon \Acal\to \Bcal$ is called a \defn{Dwyer functor} if the following conditions are satisfied:
	\begin{enumerate}
		\item The functor $ f $ is fully faithful, with essential image a sieve.

		\item For all $ b\in \Bcal $ such that $\Acal\crosslimits_{\Bcal}\Bcal_{/b}$ is nonempty, the \category $ \Acal\crosslimits_{\Bcal}\Bcal_{/b}$ admits a terminal object, that is, a local counit map $ fRb\to b $.
	\end{enumerate} 
\end{definition}

One of the key results regarding Dwyer functors is that (homotopy) pushouts of $ 1 $-categories along Dwyer functors remain $ 1 $-categories \cite[Theorem 1.6]{arXiv:2205.02353}. 
Using \Cref{cor:map3types,cor:sieve}, we are able to give a conceptual proof, as well as generalize this fact:

\begin{corollary}\label{cor:pushouts_along_Dwyer_functors}
    Let $\Pcal\subset \Spc$ be a full subcategory of anima containing the empty set, and consider a pushout square of \categories 
    \begin{equation}\label{square:Dwyer_pushout}
        \begin{tikzcd}[column sep=2.5em]
            \Acal \arrow[r, "g"] \arrow[d, "f"', hooked] \arrow[dr, phantom, very near end, "\ulcorner", xshift=0.25em, yshift=-0.25em] & \Ccal \arrow[d, "\fbar"] \\
            \Bcal \arrow[r, "\gbar"'] & \Dcal \comma
        \end{tikzcd}
    \end{equation}  
    where $ f $ is a Dwyer functor, and all mapping anima of $\Acal $, $ \Bcal $, and $ \Ccal $ lie in $ \Pcal $. 
    Then all mapping anima of $\Dcal$ also lie in $ \Pcal $. 
\end{corollary}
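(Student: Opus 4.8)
The plan is to verify directly that every mapping anima of $\Dcal$ lies in $\Pcal$, by going through the four types of mapping anima from \cref{cor:map3types} and \cref{prop:map4} and using that a Dwyer functor is in particular a sieve inclusion. As a preliminary reduction I would replace $f$ by the inclusion of its essential image; this changes neither the pushout $\Dcal$ nor any of the hypotheses, so we may assume that $\Acal \subseteq \Bcal$ is an honest sieve. Since $\fbar$ and $\gbar$ are jointly essentially surjective and $\gbar(a) \simeq \fbar(g(a))$ for every $a \in \Acal$, every object of $\Dcal$ is equivalent either to $\fbar(c)$ for some $c \in \Ccal$, or to $\gbar(b)$ for some $b$ in $\Bcal$ that does not lie in the sieve $\Acal$. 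It therefore suffices to show that the four corresponding types of mapping anima lie in $\Pcal$.

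Three of these are essentially free. First, $\Map_\Dcal(\fbar(c),\fbar(c')) \simeq \Map_\Ccal(c,c')$ by \cref{cor:map3types}(1), hence lies in $\Pcal$. Second, for $b$ outside the sieve, \cref{cor:map3types}(2) gives $\Map_\Dcal(\gbar(b),\fbar(c)) \simeq \abs{\Bcal_{b/}\crosslimits_{\Bcal}\Acal\crosslimits_{\Ccal}\Ccal_{/c}}$, and $\Bcal_{b/}\crosslimits_{\Bcal}\Acal$ is empty because the sieve condition forbids any morphism out of $b$ into $\Acal$; as $\emptyset \in \Pcal$, this case is done. Third, for $b_0$ outside the sieve, \cref{cor:sieve}(2) identifies $\Map_\Dcal(\gbar(b_0),\gbar(b_1))$ with $\Map_\Bcal(b_0,b_1)$, which lies in $\Pcal$.

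This leaves the anima $\Map_\Dcal(\fbar(c),\gbar(b))$, which is the only place where the second axiom of \cref{def:Dwyer} is needed. By \cref{cor:map3types}(3) it is $\abs{\Ccal_{c/}\crosslimits_{\Ccal}\mathcal{M}}$, where $\mathcal{M}$ denotes the right fibration $\Acal\crosslimits_{\Bcal}\Bcal_{/b}$ over $\Acal$ (a base change of $\Bcal_{/b}\to\Bcal$). If $\mathcal{M}$ is empty the anima is $\emptyset \in \Pcal$. Otherwise the Dwyer condition supplies a terminal object of $\mathcal{M}$, lying over some $Rb \in \Acal$; since a right fibration over $\Acal$ with a terminal object is representable, $\mathcal{M} \simeq \Acal_{/Rb}$ over $\Acal$, and by full faithfulness of $f$ this is in turn equivalent, over $\Acal$ and hence over $\Ccal$, to $\Acal\crosslimits_{\Bcal}\Bcal_{/f(Rb)}$. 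Feeding this equivalence back into \cref{cor:map3types}(3), and then using $\gbar(f(Rb)) \simeq \fbar(g(Rb))$ together with \cref{cor:map3types}(1), I obtain
\begin{align*}
	\Map_\Dcal(\fbar(c),\gbar(b)) &\simeq \abs{\Ccal_{c/}\crosslimits_{\Ccal}\Acal\crosslimits_{\Bcal}\Bcal_{/f(Rb)}} \\
	&\simeq \Map_\Dcal\big(\fbar(c),\fbar(g(Rb))\big) \simeq \Map_\Ccal(c,g(Rb)) \comma
\end{align*}
which lies in $\Pcal$. I expect this final bootstrap — recognizing $\Bcal_{/b}$, up to realization, as the slice over an object $f(Rb)$ that already belongs to the sieve, and then reapplying \cref{cor:map3types} to land back in a mapping anima of $\Ccal$ — to be the crux of the argument; the remaining work is only bookkeeping about sieves and the enumeration of object types.
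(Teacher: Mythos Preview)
Your proof is correct and follows essentially the same route as the paper: both use the sieve property to handle the $\Map_\Dcal(\gbar(b),\fbar(c))$ and $\Map_\Dcal(\gbar(b_0),\gbar(b_1))$ cases via \Cref{cor:map3types} and \Cref{cor:sieve}, and both use the terminal object in $\Acal\crosslimits_{\Bcal}\Bcal_{/b}$ to identify $\Map_\Dcal(\fbar(c),\gbar(b))$ with $\Map_\Ccal(c,g(Rb))$. The only cosmetic difference is in this last identification: the paper simplifies $\abs{\Ccal_{c/}\crosslimits_{\Ccal}\Acal_{/Rb}}$ directly by an elementary cofinality argument, whereas you rewrite $\Acal_{/Rb}$ as $\Acal\crosslimits_{\Bcal}\Bcal_{/f(Rb)}$ and bootstrap through \Cref{cor:map3types}(3) and (1); both lead to the same conclusion.
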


\begin{proof}
    By \Cref{cor:map3types,cor:sieve}, except for the ``cross-terms'', all mapping anima in $\Dcal$ are mapping anima in $\Bcal$ or $\Ccal$, so it suffices to examine the cross-terms. 
    So let $b\in\Bcal,c\in \Ccal$, we have to consider $|\Bcal_{b/}\crosslimits_{\Bcal} \Acal \crosslimits_{\Ccal} \Ccal_{/c}|$ and $|\Ccal_{c/}\crosslimits_{\Ccal} \Acal \crosslimits_{\Bcal} \Bcal_{/b}|$.
    
    For the former, we note that if it is not empty, then $\Acal$ being a sieve implies that $b\in\Acal$, and so the relevant anima becomes $|\Acal_{b/}\crosslimits_{\Ccal}\Ccal_{/c}|$ which is equivalent, by an elementary cofinality argument, to $\Map_{\Ccal}(g(b),c)$ and is thus in $ \Pcal $. 
    Since $\emptyset\in\Pcal$, we are done either way. 

    For the latter, using the notation from \Cref{def:Dwyer} we note that again, either this anima is empty, or by assumption,
    \begin{equation*}
    	\Acal\crosslimits_{\Bcal}\Bcal_{/b}\simeq \Acal_{/Rb} \period
    \end{equation*}
    Hence the relevant anima becomes $|\Ccal_{c/}\crosslimits_{\Ccal}\Acal_{/Rb}|$ which, again by an elementary cofinality argument, is simply $\Map_{\Ccal}(c,gRb)$.
\end{proof}

Let $ n \geq 0 $ be an integer.
Recall that \acategory $ \Ccal $ is an \defn{$ n $-category} if all mapping anima in $ \Ccal $ are $ (n-1) $-truncated.
Write $ \Cat_n \subset \Catinfty $ for the full subcategory spanned by the $ n $-categories.

\begin{corollary}
	For each $ n \geq 0 $, the inclusion $ \incto{\Cat_n}{\Catinfty} $ preserves pushouts along Dwyer functors
\end{corollary}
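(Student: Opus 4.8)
The plan is to deduce this directly from \Cref{cor:pushouts_along_Dwyer_functors}. First I would take $\Pcal \subset \Ani$ to be the full subcategory of $(n-1)$-truncated anima. This $\Pcal$ satisfies the hypotheses of that corollary: it is a full subcategory of $\Ani$, and it contains the empty anima, since $\emptyset$ is $(-1)$-truncated and hence also $(n-1)$-truncated for every $n \geq 0$. Moreover, by definition $\Cat_n \subset \Catinfty$ is precisely the full subcategory spanned by those \categories all of whose mapping anima lie in this $\Pcal$.

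Next I would set up the reduction. Given a span $\Bcal \hookleftarrow \Acal \to \Ccal$ in $\Cat_n$ whose left leg $f$ is a Dwyer functor, I would form its pushout $\Dcal$ in $\Catinfty$, so that we are exactly in the situation of \Cref{cor:pushouts_along_Dwyer_functors}. Since $\Acal$, $\Bcal$, and $\Ccal$ lie in $\Cat_n$, all of their mapping anima lie in $\Pcal$, and the corollary then tells us that every mapping anima of $\Dcal$ lies in $\Pcal$ as well; that is, $\Dcal \in \Cat_n$.

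Finally I would invoke the general fact that a fully faithful functor preserves any colimit whose value happens to land in its essential image. Concretely: since $\incto{\Cat_n}{\Catinfty}$ is fully faithful and $\Dcal \in \Cat_n$, for every $\Ecal \in \Cat_n$ it identifies $\Map_{\Cat_n}(\Dcal, \Ecal)$ with $\Map_{\Catinfty}(\Dcal, \Ecal) \simeq \Map_{\Catinfty}(\Bcal, \Ecal) \times_{\Map_{\Catinfty}(\Acal, \Ecal)} \Map_{\Catinfty}(\Ccal, \Ecal) \simeq \Map_{\Cat_n}(\Bcal, \Ecal) \times_{\Map_{\Cat_n}(\Acal, \Ecal)} \Map_{\Cat_n}(\Ccal, \Ecal)$, so $\Dcal$ is also the pushout of the span in $\Cat_n$; hence the inclusion carries this pushout to $\Dcal$, which is the pushout in $\Catinfty$, i.e.\ it preserves the pushout. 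I do not expect a genuine obstacle here: essentially all of the content is already packaged in \Cref{cor:pushouts_along_Dwyer_functors}, and the only additional inputs are the triviality that $\emptyset$ is $(n-1)$-truncated and the formal fact about fully faithful functors and colimits landing in their image.
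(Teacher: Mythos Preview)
Your proposal is correct and follows exactly the same approach as the paper: apply \Cref{cor:pushouts_along_Dwyer_functors} with $\Pcal$ the full subcategory of $(n-1)$-truncated anima. The paper's proof is the single sentence ``Immediate from \Cref{cor:pushouts_along_Dwyer_functors} for $\Pcal$ the \category of $(n-1)$-truncated anima,'' so you have simply spelled out the implicit details (that $\emptyset$ is $(n-1)$-truncated, and the formal fact that a fully faithful inclusion preserves any colimit landing in its image).
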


\begin{proof}
	Immediate from \Cref{cor:pushouts_along_Dwyer_functors} for $ \Pcal $ be the \category of $ (n-1) $-truncated anima.
\end{proof}


\subsection{Pushout products}\label{subsec:pushout_products}

\begin{corollary}
    Let $\Ccal_0\subset \Ccal $ and $ \Dcal_0\subset \Dcal $ be full subcategories. 
    Then the natural square
	\begin{equation*}
        \begin{tikzcd}[column sep=2.5em]
            \Ccal_0 \cross \Dcal_0 \arrow[r, hooked] \arrow[d, hooked] & \Ccal_0 \cross \Dcal \arrow[d, hooked] \\
             \Ccal \cross \Dcal_0 \arrow[r, hooked] &  (\Ccal\times \Dcal_0) \union (\Ccal_0\times \Dcal)
        \end{tikzcd}
    \end{equation*}  
    is a pushout square of \categories.
    Equivalently, the functor from the pushout product to the product is fully faithful. 
\end{corollary}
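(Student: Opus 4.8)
The plan is to verify the hypotheses of \Cref{cor:ff-and-ff}. Let me write $\Ecal$ for the full subcategory $(\Ccal \cross \Dcal_0) \union (\Ccal_0 \cross \Dcal)$ of $\Ccal \cross \Dcal$, that is, the full subcategory spanned by the pairs $(c,d)$ with $c \in \Ccal_0$ or $d \in \Dcal_0$; this is the codomain of the square. Since a product of fully faithful functors is fully faithful, all four functors in the square are fully faithful, so \Cref{cor:ff-and-ff} applies once its two conditions are checked. Joint essential surjectivity of $\Ccal \cross \Dcal_0 \hookrightarrow \Ecal$ and $\Ccal_0 \cross \Dcal \hookrightarrow \Ecal$ is immediate from the description of $\Ecal$. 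For the mapping-anima condition, the objects one must test are a $b = (c,d)$ with $c \in \Ccal \setminus \Ccal_0$ and $d \in \Dcal_0$, and a $c' = (c_1,d_1)$ with $c_1 \in \Ccal_0$ and $d_1 \in \Dcal \setminus \Dcal_0$.

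The key point is that every fibre product occurring in \Cref{cor:ff-and-ff} splits along the decomposition $\Ccal \cross \Dcal$, because all the \categories in sight and all the functors between them are products of a ``$\Ccal$-part'' and a ``$\Dcal$-part'' (using that a slice of a product of \categories is the product of the slices), and finite products commute with fibre products. Taking $\Acal = \Ccal_0 \cross \Dcal_0$, $\Bcal = \Ccal \cross \Dcal_0$, and $\Ccal_0 \cross \Dcal$ for the three corners of the span in \Cref{cor:ff-and-ff}, the first fibre product there becomes
\begin{equation*}
	\Bcal_{b/} \crosslimits_{\Bcal} \Acal \crosslimits_{\Ccal_0 \cross \Dcal} (\Ccal_0 \cross \Dcal)_{/c'}
	\; \simeq \;
	\bigl( \Ccal_{c/} \crosslimits_{\Ccal} \Ccal_0 \crosslimits_{\Ccal_0} (\Ccal_0)_{/c_1} \bigr)
	\cross
	\bigl( (\Dcal_0)_{d/} \crosslimits_{\Dcal_0} \Dcal_0 \crosslimits_{\Dcal} \Dcal_{/d_1} \bigr) \comma
\end{equation*}
and the second fibre product in \Cref{cor:ff-and-ff} splits analogously.

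To identify the two factors I would apply \Cref{cor:map3types} to tautological pushout squares. The square obtained from the inclusion $\Ccal_0 \hookrightarrow \Ccal$ and the identity $\Ccal_0 = \Ccal_0$, which has pushout $\Ccal$, shows via the formula for $\Map(\gbar(-),\fbar(-))$, and using $c_1 \in \Ccal_0$, that $\abs{\Ccal_{c/} \crosslimits_{\Ccal} \Ccal_0 \crosslimits_{\Ccal_0} (\Ccal_0)_{/c_1}} \simeq \Map_{\Ccal}(c, c_1)$; the square obtained from $\Dcal_0 \hookrightarrow \Dcal$ and the identity $\Dcal_0 = \Dcal_0$ shows via the formula for $\Map(\fbar(-),\gbar(-))$, and using $d \in \Dcal_0$, that $\abs{(\Dcal_0)_{d/} \crosslimits_{\Dcal_0} \Dcal_0 \crosslimits_{\Dcal} \Dcal_{/d_1}} \simeq \Map_{\Dcal}(d, d_1)$. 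Since realization preserves finite products, the displayed fibre product realizes to $\Map_{\Ccal}(c, c_1) \cross \Map_{\Dcal}(d, d_1) = \Map_{\Ccal \cross \Dcal}\bigl((c,d),(c_1,d_1)\bigr)$, which is $\Map_{\Ecal}(\gbar(b), \fbar(c'))$ because $\Ecal$ is full in $\Ccal \cross \Dcal$; the other mapping anima is handled symmetrically, after interchanging $(\Ccal, \Ccal_0)$ with $(\Dcal, \Dcal_0)$. Both conditions of \Cref{cor:ff-and-ff} are then satisfied, so the square is a pushout. The ``equivalently'' clause follows: the essential image of the functor from the pushout product to $\Ccal \cross \Dcal$ is exactly $\Ecal$ (it is jointly covered by $\Ccal \cross \Dcal_0$ and $\Ccal_0 \cross \Dcal$), so that functor is fully faithful precisely when the square is a pushout.

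The only real work is the bookkeeping in the splitting, together with checking that the comparison maps furnished by \Cref{cor:ff-and-ff} and \Cref{cor:map3types} are compatible with the product decomposition — that is, that under it the ``compose'' maps on the two factors assemble into the ``compose'' map for $\Ccal \cross \Dcal$. I do not anticipate any genuine obstacle beyond this routine verification.
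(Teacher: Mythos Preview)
Your proof is correct and follows essentially the same route as the paper's: both apply \Cref{cor:ff-and-ff}, split the relevant fibre product into a product of a $\Ccal$-factor and a $\Dcal$-factor, and then identify the realization of each factor with a mapping anima. The only cosmetic difference is that you justify this last identification by invoking \Cref{cor:map3types} on a trivial pushout square, whereas the paper simply asserts $|\Ccal_{c/}\times_\Ccal (\Ccal_0)_{/c_0}| \simeq \Map_\Ccal(c,c_0)$ directly (an elementary cofinality argument, since $(\Ccal_0)_{/c_0}$ has a terminal object and $\Ccal_{c/}\to\Ccal$ is a left fibration).
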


\begin{proof}
    Note that all functors in the square are fully faithful and the bottom right-hand \category is a full subcategory of $\Ccal \times \Dcal$.
    Thus by \cref{cor:ff-and-ff} we only have to show that map
    \begin{equation*}
    	(\Ccal\times \Dcal_0)_{(c,d_0)/}\crosslimits_{\Ccal\times\Dcal_0} (\Ccal_0\times\Dcal_0)\crosslimits_{\Ccal_0\times\Dcal} (\Ccal_0\times\Dcal)_{/(c_0,d)}
        \longrightarrow
        \Map_{\Ccal \times \Dcal}((c,d_0), (c_0, d))
    \end{equation*}
    is a weak equivalence for all $(c,d_0) \in \Ccal \times \Dcal_0$ and $(d_0,d) \in \Ccal_0 \times \Dcal$. 
    (The other case follows by symmetry.)
    We can rewrite the left \category as 
    \begin{equation*}
     	\paren{\Ccal_{c/}\crosslimits_{\Ccal} (\Ccal_0)_{/c_0}} \times \paren{(\Dcal_0)_{d_0/}\crosslimits_{\Dcal} \Dcal_{/d}}
    \end{equation*}
    which is indeed weakly equivalent to $\Map_\Ccal(c,c_0) \times \Map_\Dcal(d_0,d)$, as claimed.
\end{proof}


\subsection{Reedy categories}\label{subsec:Reedy_categories}

Classically, a \textit{Reedy $ 1 $-category} $\Rcal$ has objects in bijection with $\NN$ and the Reedy structure allows for inductive arguments on the degree, where the step from degree $n-1$ to degree $n$ is controlled by the $n$-th \textit{latching} and \textit{matching} objects.
The most fundamental instance of this is that to extend a functor $X\colon \Rcal_{\le n-1} \to \Vcal$ to $X' \colon \Rcal_{\le n} \to \Vcal$ is equivalent to specifying a factorization
\begin{equation*}
	\Lup_nX \to X'(n) \to \Mup_nX
\end{equation*}
of the canonical map $\Lup_nX \to \Mup_nX$ from the $n$-th latching to the $n$-th matching object of $ X $.
See, for example, \HTT{Corollary}{A.2.9.15}.
In this subsection we propose a notion of \textit{Reedy extension} that generalizes the situation $\Rcal_{\le n-1} \hookrightarrow \Rcal_{\le n} $.
We then use the pushout formulas discussed earlier, to show that functors out of $\Rcal_{\le n}$ admit a latching-matching description analogous to the $ 1 $-categorical situation.

\begin{definition}\label{defn:Reedy-extension}
    We say that a fully faithful functor $\Acal \hookrightarrow \Bcal$ is a \defn{Reedy extension} if there is a \defn{complementary subcategory} $\Ccal \subset \Bcal$ such that 
    \begin{enumerate}
        \item The map $\Acal^\simeq \sqcup \Ccal^\simeq \to \Bcal^\simeq$ is an equivalence.

        \item For all $c_0, c_1 \in \Ccal$ the map given by composition and inclusion
        \begin{equation*}
            |\Bcal_{c_0/} \crosslimits_{\Bcal} \Acal \crosslimits_{\Bcal} \Bcal_{/c_1}| 
            \sqcup \Map_{\Ccal}(c_0, c_1)
            \longrightarrow \Map_\Bcal(c_0, c_1)
        \end{equation*}
        is an equivalence.
    \end{enumerate}
\end{definition}

\begin{remark}
    Equivalently, we have that $\Acal \hookrightarrow \Bcal$ is a Reedy extension if it satisfies
    \begin{enumerate}
        \item For all $c \in (\Bcal^\simeq \setminus \Acal^\simeq)$ the identity $\id{c}$ does not factor through any object in $\Acal$.

        \item If two morphisms $f\colon c_0 \to c_1$ and $g\colon c_1 \to c_2$ with $c_i \in (\Bcal^\simeq \setminus \Acal^\simeq)$ each do not factor through an object in $\Acal$, then neither does their composite.

        \item For any two $c_0, c_1 \in (\Bcal^\simeq \setminus \Acal^\simeq)$ the composition map
        \begin{equation*}
            |\Bcal_{c_0/} \crosslimits_{\Bcal} \Acal \crosslimits_{\Bcal} \Bcal_{/c_1}| \too \Map_\Bcal(c_0, c_1)
        \end{equation*}
        is a monomorphism.
    \end{enumerate}
    In this case there is a well-defined and unique complementary \category $\Ccal \subset \Bcal$ that contains those objects not equivalent to objects in $\Acal$ and those morphisms that do not factor through an object of $\Acal$.
\end{remark}

The idea of Reedy extensions is that we can describe functors out of $\Bcal$ in terms of a functor out of $\Acal$ and a factorization of the map from the matching to the latching functor over $\Ccal$.
Let 
\begin{equation*} 
    i\colon \Acal \hookrightarrow \Bcal \andeq  j \colon \fromto{\Ccal}{\Bcal}
\end{equation*}
denote the inclusions.

\begin{theorem}[(Reedy extension theorem)]\label{thm:Reedy}
    Suppose we have a Reedy extension $\Acal \hookrightarrow \Bcal$ with complementary subcategory $ j \colon \fromto{\Ccal}{\Bcal} $.
    Then for any presentable \category $\Vcal$ the square
	\begin{equation*}
		\begin{tikzcd}[column sep=8em]
			\Fun(\Bcal, \Vcal) \arrow[d, "i^*"'] \arrow[r, "(j^*i_!i^* \to j^* \to j^*i_*i^*)"] & {\Fun([2], \Fun(\Ccal, \Vcal))} \arrow[d, "\ev_{0 < 2}"] \\
			\Fun(\Acal, \Vcal) \arrow[r, "(j^*i_! \to j^*i_*)"'] & \Fun(\{0<2\}, \Fun(\Ccal, \Vcal)) 
		\end{tikzcd}
	\end{equation*}
	is cartesian.
\end{theorem}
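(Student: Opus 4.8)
The plan is to show that the functor
\[
\Psi\colon \Fun(\Bcal,\Vcal)\too \Fun(\Acal,\Vcal)\times_{\Fun(\{0<2\},\Fun(\Ccal,\Vcal))}\Fun([2],\Fun(\Ccal,\Vcal))
\]
induced by the square is an equivalence. Unwinding, $\Psi$ sends $F$ to the pair consisting of $i^*F$ and the $2$-simplex obtained by applying $j^*$ to the counit--unit factorization $i_!i^*F\to F\to i_*i^*F$ of the latching-to-matching transformation; in particular its $0\to 2$ edge is $j^*$ of the latching-to-matching map of $i^*F$, so $\Psi$ is a functor over $\Fun(\Acal,\Vcal)$. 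First I would reduce to a fibrewise statement: since $\Vcal$ is presentable it is complete, so $\ev_{0<2}$ is a cartesian fibration (cartesian edges are the pointwise pullbacks of factorizations), hence so is the right-hand vertical map after base change along $G\mapsto\nu_G$, where $\nu_G\colon j^*i_!G\to j^*i_*G$ denotes the latching-to-matching map of $G$; likewise $i^*$ is a cartesian fibration because $\Vcal$ is complete. As $\Psi$ preserves cartesian edges (relative right Kan extensions along $i$ are pointwise, and go to pointwise pullbacks of factorizations), it is enough to show that $\Psi$ is an equivalence on the fibre over each $G\in\Fun(\Acal,\Vcal)$: the fibre of the target is the category of factorizations of $\nu_G$ in $\Fun(\Ccal,\Vcal)$, and the fibre of $i^*$ is the category $\Fun(\Bcal,\Vcal)_{i^*=G}$ of functors $\Bcal\to\Vcal$ restricting to $G$ on $\Acal$.

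The heart of the proof is to identify these two fibres, and this is exactly where conditions (1) and (2) of \Cref{defn:Reedy-extension} enter. By condition (1) the objects of $\Bcal$ are $\Acal^\simeq\sqcup\Ccal^\simeq$, so a functor $F$ extending $G$ is determined by: its restriction $H\coloneq j^*F$ to $\Ccal$; its action on morphisms $a\to c$, which by the coend formula for $i_!$ is the same datum as a natural transformation $\phi\colon j^*i_!G\to H$ (the profunctor $(a,c)\mapsto\Map_\Bcal(a,c)$ collapses against $G$ to $j^*i_!G$); its action on morphisms $c\to a$, which is dually a natural transformation $\psi\colon H\to j^*i_*G$; and its action on morphisms $c\to c'$. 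For the last datum, condition (2) splits $\Map_\Bcal(c,c')$ as $\Map_\Ccal(c,c')\sqcup\real{\Bcal_{c/}\times_\Bcal\Acal\times_\Bcal\Bcal_{/c'}}$: on the first summand the action of $F$ is forced to agree with $H$, and the second summand is precisely the two-sided bar construction computing the profunctor composite of $(c,a)\mapsto\Map_\Bcal(c,a)$ with $(a,c')\mapsto\Map_\Bcal(a,c')$, so the action of $F$ there is forced to be ``$\phi$ after $\psi$''. The one coherence that is not yet constrained is that the two evaluations of a composite $a\to c\to a'$ must agree, i.e.\ that $F$ sends it to $G$ of the corresponding morphism of $\Acal$; unwinding the coend description of $\nu_G$ shows this is exactly the requirement $\psi\circ\phi\simeq\nu_G$. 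So, at the level of underlying data, $\Fun(\Bcal,\Vcal)_{i^*=G}$ is the category of factorizations of $\nu_G$, naturally in $G$, and matching this identification with $\Psi$ gives the fibrewise, hence global, equivalence.

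The step I expect to be the main obstacle is promoting the informal ``the action on $\Ccal$-to-$\Ccal$ morphisms is forced'' into an actual equivalence of $\infty$-categories natural in $G$ --- that is, proving the relevant recognition principle for functors out of a category assembled from two profunctors subject to the compatibility of condition (2). A clean way to organize this is to introduce the subcategories $\Bcal^m\subset\Bcal$ (objects $\Acal^\simeq\sqcup\Ccal^\simeq$; morphisms those of $\Acal$, those of $\Ccal$, and the morphisms $a\to c$) and $\Bcal^\ell\subset\Bcal$ (with the morphisms $c\to a$ instead); in $\Bcal^m$ the subcategory $\Acal$ is a cosieve and $\Ccal$ a sieve, so $\Fun(\Bcal^m,\Vcal)$ is the evident comma category, whose two legs one identifies with $j^*i_!$ and $j^*i_*$ by checking that the relevant comma categories coincide with $\Acal\times_\Bcal\Bcal_{/c}$ and $\Acal\times_\Bcal\Bcal_{c/}$; dually for $\Bcal^\ell$. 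Then $\Fun(\Bcal,\Vcal)$ has to be cut out of the pullback $\Fun(\Bcal^m,\Vcal)\times_{\Fun(\Acal\sqcup\Ccal,\Vcal)}\Fun(\Bcal^\ell,\Vcal)$ by the extra equation $\psi\circ\phi\simeq\nu_G$; note that $\Bcal$ is a proper quotient of the pushout $\Bcal^m\coproductlimits^{\Acal\sqcup\Ccal}\Bcal^\ell$ (that pushout adds free composites $a\to c\to a'$), so this cutting down is genuinely necessary and is precisely what condition (2) provides. Making the ``cutting down'' precise --- for instance by identifying $\Bcal$ with a suitable localization, or by arguing directly from the collage presentation that restriction along $\Bcal^m\coproductlimits^{\Acal\sqcup\Ccal}\Bcal^\ell\to\Bcal$ is fully faithful onto the described locus --- is the one place where some genuine bookkeeping of higher coherences cannot be avoided; the two conditions defining a Reedy extension are exactly tailored so that this bookkeeping goes through.
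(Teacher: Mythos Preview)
Your fibrewise reduction is reasonable and the heuristic identification of the fibres is on the right track, but as you yourself flag, the proposal does not complete the key step: turning ``the action on $\Ccal$-to-$\Ccal$ morphisms is forced by condition (2)'' into an actual equivalence of \categories. The suggestion to cut $\Fun(\Bcal,\Vcal)$ out of $\Fun(\Bcal^m\coproductlimits^{\Acal\sqcup\Ccal}\Bcal^\ell,\Vcal)$ by the equation $\psi\circ\phi\simeq\nu_G$ is the right shape of statement, but making it precise means exhibiting $\Bcal$ as a specific colimit of simpler categories and then identifying what that colimit does on functor categories; you have not supplied such a colimit description, and ``the conditions are tailored so that the bookkeeping goes through'' is not a proof. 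This is a genuine gap, not merely a routine omission: the whole difficulty of the theorem lives in this coherence step.

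The paper resolves exactly this problem by a different device that you are missing. It introduces an auxiliary category $\Dcal\subset\Bcal\times[2]$, with objects $\Acal^\simeq\times\{0,2\}\sqcup\Ccal^\simeq\times\{1\}$ and morphisms arranged so that $\Dcal$ comes with a functor $\pi\colon\Dcal\to[2]$ having $\Dcal_{02}=\Acal\times\{0<2\}$ and $\Dcal_1=\Ccal$. Two lemmas then finish the job. First, for \emph{any} category over $[2]$ the analogue of the theorem holds, proved by a short chain of Kan-extension pullback squares through the poset $\{0<L<1<R<2\}$. Second, the square
\[
\begin{tikzcd}
\Acal\times\{0<2\}\arrow[r,hook]\arrow[d,"\pr_{\Acal}"'] & \Dcal\arrow[d] \\
\Acal\arrow[r,hook,"i"'] & \Bcal
\end{tikzcd}
\]
is a pushout of \categories; this is verified using the mapping-anima formulas for pushouts along fully faithful functors developed earlier in the paper, and in particular the ``fourth'' mapping anima formula of \Cref{prop:map4} is exactly what converts condition (2) of \Cref{defn:Reedy-extension} into the required identification. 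Applying $\Fun(-,\Vcal)$ turns this pushout into a pullback, which pasted with the cartesian square for $\Dcal\to[2]$ gives the theorem. The point is that the coherence problem you isolate is absorbed into a single pushout verification for which the paper has already built machinery; your $\Bcal^m,\Bcal^\ell$ route would need to redevelop something equivalent from scratch, and the proposal does not do so.
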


\begin{remark}
    \cref{thm:Reedy} was discovered independently by Krannich--Kupers, who give a more direct proof (based on ideas of Ayala--Mazel-Gee--Rozenblyum) in \cite[Theorem 3.8]{KrannichKupers24}.
    Notably, they also establish the functoriality of the square in \cref{thm:Reedy} with respect to certain functors of pairs $\Acal \subset \Bcal$.

    In the case that the Reedy extension arises from a Reedy ($ 1 $-)category, the statement of \cref{thm:Reedy} follows from \cite[\HTTthm{Corollary}{A.2.9.15} and \HTTthm{Remark}{A.2.9.16}]{MR2522659}.
\end{remark}

\begin{remark}
    One can show a priori that 
    \begin{equation*}
        i^*\colon \Fun(\Bcal, \Vcal) \too \Fun(\Acal, \Vcal)
    \end{equation*}
    is both a cartesian and a cocartesian fibration.
    \Cref{thm:Reedy} shows that the fiber of $i^*$ is canonically identified with the factorization category
    \begin{equation*}
        (i^*)^{-1}(F)
        \simeq \Bigg\{  \begin{tikzcd}[row sep=tiny, column sep=small]
        & G \arrow[dr, dotted] & \\
        j^*i_! F \arrow[rr, "\alpha"'] \arrow[ru, dotted] && j^*i_* F
        \end{tikzcd} \; \in \Fun(\Ccal, \Vcal)\Bigg\}
    \end{equation*}
    of the canonical map $\alpha\colon j^*i_!F \to j^*i_*F$.
    In this sense, \cref{thm:Reedy} says that $\Fun(\Bcal, \Vcal)$ is a \textit{bigluing category} as defined in the $ 1 $-categorical case in \cite[Definition 3.1]{Shulman15}.
    Thus, \cref{thm:Reedy} is an \categorical version of \cite[Theorem 5.12]{Shulman15}.
    Note that in Shulman's version, the condition is roughly that for every morphism $c_0 \to c_1$ its category of factorizations through $\Acal$ is either empty or connected, whereas for us the condition becomes that the \category of factorizations is either empty or weakly contractible.
\end{remark}

\begin{remark}[{(Reedy \categories)}]
    Based on Berger and Moerdijk's definition of generalized Reedy ($ 1 $-)categories \cite[Definition 1.1]{BergerMoerdijk10}, we propose the following definition.
    A \emph{Reedy \category} is an \category $\Ccal$ with a factorization system $(\Ccal^L, \Ccal^R)$ and a map $d\colon \Ccal^\simeq \to \NN$ such that $d$ induces \emph{conservative} functors
    \begin{equation*}
        d\colon (\Ccal^L)^{\op} \too \NN 
        \andeq
        d\colon \Ccal^R \too \NN \period
    \end{equation*}
    In this situation, we have that for all $n \in \NN$ the full inclusion 
    \begin{equation*}
        i\colon \Ccal_{\le n-1} \hookrightarrow \Ccal_{\le n}
    \end{equation*}
    is a Reedy extension with complementary \category
    \begin{equation*}
    	\Ccal_n^\simeq = \coprod_{[x] \in \uppi_0(\Ccal_{n}^\simeq) } \BAut(x) \subset \Ccal^\simeq \period
    \end{equation*}
    Moreover, the matching and latching objects can be computed more easily in this situation, as by a cofinality argument we can write left and right Kan extension along $i$ as:
    \begin{equation*}
        i_!(F)(x) 
        = \colim_{y \in (\Ccal_{/x})_{\le n}} F(y)
        \simeq \colim_{y \in (\Ccal_{/x}^R)_{\le n}} F(y)
        \andeq 
        i_*(F)(x) 
        = \colim_{y \in (\Ccal_{x/})_{\le n}} F(y)
        \simeq \colim_{y \in (\Ccal_{x/}^L)_{\le n}} F(y) \period
    \end{equation*}
    This yields an inductive description of $\Fun(\Ccal, \Vcal) = \lim_{n \in \NN^{\op}} \Fun(\Ccal_{\le n}, \Vcal)$ where in each step is given by a pullback square as in \cref{thm:Reedy}.
\end{remark}


\subsubsection{Proof of the Reedy extension theorem}

We first consider the special case where there is a functor $\pi\colon \Bcal \to [2]$ such that $\Acal = \Bcal_{02}$ and $\Ccal = \Bcal_1$.
The general case will then follow via a pushout computation in \cref{lem:Reedy-pushout}.

\begin{notation}
    Define a poset over $[2]$ by
    \begin{equation*}
    	P \coloneq \{0 < L < 1 < R < 2\} \to [2]
    \end{equation*}
    where $L,R \mapsto 1$ and $i \mapsto i$ otherwise.
    For any subset $S \subset P$ we write $\Bcal_S \coloneq S \times_{[2]} \Bcal$.
    For example, $\Bcal_{L1R} \simeq \Bcal_1 \times [2]$.
\end{notation}

\begin{lemma}\label{lem:Delta2}
    For any functor $\pi\colon \Bcal \to [2]$ and any presentable \category $\Vcal$, the square
    \begin{equation*}
		\begin{tikzcd}[column sep=8em]
			\Fun(\Bcal, \Vcal) \arrow[d, "i^*"'] \arrow[r, "(j^*i_!i^* \to j^* \to j^*i_*i^*)"] & {\Fun([2], \Fun(\Bcal_1, \Vcal))} \arrow[d, "\ev_{0 < 2}"] \\
			\Fun(\Bcal_{02}, \Vcal) \arrow[r, "(j^*i_! \to j^*i_*)"'] & \Fun(\{0<2\}, \Fun(\Bcal_1, \Vcal)) 
		\end{tikzcd}
	\end{equation*}
	is cartesian.
\end{lemma}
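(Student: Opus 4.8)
The plan is to use the poset $P$ and the categories $\Bcal_S$ to realise $\Fun(\Bcal,\Vcal)$ as a pullback with the four corners of the statement, and then to match the three structure maps using a sieve--cosieve recollement. The first point is that the projection $p\colon\Bcal_P\to\Bcal$ is a localization: it is the base change along $\pi$ of the functor $P\to[2]$, whose fibres $\{0\}$, $\{L<1<R\}$ and $\{2\}$ are weakly contractible, and a Quillen-A argument using this shows that the projection $\Bcal_P\times_\Bcal\Bcal_{/b}\to\Bcal_{/b}$ is cofinal for every $b\in\Bcal$; hence $p_!p^*\to\mathrm{id}$ is an equivalence and $p^*\colon\Fun(\Bcal,\Vcal)\hookrightarrow\Fun(\Bcal_P,\Vcal)$ is fully faithful, with essential image the functors that invert the ``fibrewise'' edges $(L,c)\to(1,c)\to(R,c)$, $c\in\Ccal$.

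Next I would establish that the square of full-subcategory inclusions
\[
\begin{tikzcd}[column sep=3em]
\Bcal_{LR}\arrow[r,hooked]\arrow[d,hooked] & \Bcal_{0LR2}\arrow[d,hooked]\\
\Bcal_{L1R}\arrow[r,hooked] & \Bcal_P
\end{tikzcd}
\]
is a pushout of \categories. All four arrows are fully faithful and $\Bcal_{0LR2},\Bcal_{L1R}$ are jointly surjective, so by \cref{cor:ff-and-ff} it suffices to check the cross-term condition, which is a direct inspection: for $b$ over $0$ or $2$ and $c$ over $1$, the relevant fibre product of slice categories is either empty or has a terminal (resp.\ initial) object, by full faithfulness of $\Ccal=\Bcal_1\hookrightarrow\Bcal$. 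Applying $\Fun(-,\Vcal)$ and using $\Bcal_{LR}\simeq[1]\times\Ccal$ and $\Bcal_{L1R}\simeq[2]\times\Ccal$, this yields a cartesian square with corners $\Fun(\Bcal_P,\Vcal)$, $\Fun(\Bcal_{0LR2},\Vcal)$, $\Fun([2],\Fun(\Ccal,\Vcal))$, $\Fun(\{0<2\},\Fun(\Ccal,\Vcal))$ and restriction maps.

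Intersecting the essential image from the first step with this square, ``inverting the fibrewise edges'' becomes exactly the requirement that the $\Fun([2],\Fun(\Ccal,\Vcal))$-coordinate be a constant $[2]$-diagram, so one obtains $\Fun(\Bcal,\Vcal)\simeq\{H\in\Fun(\Bcal_{0LR2},\Vcal)\mid H_L\to H_R\text{ is an equivalence}\}$, where $H_L,H_R\in\Fun(\Ccal,\Vcal)$ denote the restrictions of $H$ to the copies of $\Ccal$ over $L$ and over $R$. To recognise this as the pullback of the statement I would iterate the sieve--cosieve recollement along the sieves $\{0\}\subset\{0,L\}\subset\{0,L,R\}\subset\{0,L,R,2\}$ of $\Bcal_{0LR2}$: a functor on \acategory with a sieve and complementary cosieve is the same as a functor on the sieve, a functor on the cosieve, and a natural transformation from the left Kan extension of the former to the latter. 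A short slice computation --- using $(\Bcal_0)_{/(L,c)}\simeq(\Bcal_0)_{/c}$ and its variants --- then identifies the contribution of the objects over $L$ with the latching functor $j^*i_!$, the contribution of the objects over $R$ together with $H_L\simeq H_R$ with a map $H_L\to j^*i_*F$ (obtained by transposing along the adjunction between the partial latching $\Fun(\Ccal,\Vcal)\to\Fun(\Bcal_2,\Vcal)$ and $j^*i_*$), and the remaining data with a functor $F\coloneq H|_{\Bcal_{02}}\in\Fun(\Acal,\Vcal)$ together with a factorization through $H_L$ of the canonical transformation $j^*i_!F\to j^*i_*F$. This is exactly a point of $\Fun(\Acal,\Vcal)\times_{\Fun(\{0<2\},\Fun(\Ccal,\Vcal))}\Fun([2],\Fun(\Ccal,\Vcal))$, and tracing the identifications shows that the resulting cartesian square has legs $i^*$, $(j^*i_!\to j^*i_*)$ and $\ev_{0<2}$.

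The main obstacle is this last matching: one must follow the coherence datum through the iterated recollement and through the transposition turning ``partial latching of $H_L$ $\to F|_{\Bcal_2}$'' into ``$H_L\to j^*i_*F$'', i.e.\ verify that the canonical comparison between the two ways of left-Kan-extending from the objects over $0$ to those over $2$ (directly, versus via the objects over $1$) transposes so that the recollement compatibility becomes the assertion that the chosen factorization refines $j^*i_!F\to j^*i_*F$; everything else is routine manipulation of slices and Kan extensions. (One could also bypass $\Bcal_P$ and work directly with the localization $\Bcal_{0LR2}\to\Bcal$ induced by $\{0<L<R<2\}\to[2]$, at the cost of making the $[2]$-shaped corner appear only after the recollement step.)
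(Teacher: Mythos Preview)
Your approach is correct in outline, and you use the same pushout square $\Bcal_{LR} \cup_{} \Bcal_{0LR2} \cup \Bcal_{L1R} \simeq \Bcal_P$ as the paper. The difference lies in how you embed $\Fun(\Bcal,\Vcal)$ into $\Fun(\Bcal_P,\Vcal)$. You use the localization $p^*$, which lands in the functors constant along fibres; this forces you into the iterated recollement of step~4 and the coherence check you flag as the ``main obstacle''. The paper instead embeds via \emph{left Kan extension followed by right Kan extension}: it factors
\[
\Fun(\Bcal,\Vcal) \xhookrightarrow{\ \Lan\ } \Fun(\Bcal_{0L12},\Vcal) \xhookrightarrow{\ \Ran\ } \Fun(\Bcal_P,\Vcal),
\]
and similarly $\Fun(\Bcal_{02},\Vcal) \hookrightarrow \Fun(\Bcal_{0LR2},\Vcal)$. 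The point is that under this embedding the restriction to $\Bcal_{L1R}\simeq[2]\times\Ccal$ is \emph{already} the diagram $(j^*i_!i^* \to j^* \to j^*i_*i^*)$, because the $\Lan$ fills in the value at $L$ as the latching object and the $\Ran$ fills in $R$ as the matching object. The proof then reduces to pasting three cartesian squares: the pushout square on the right, and two squares on the left asserting that a functor on $\Bcal_{0L12}$ (resp.\ $\Bcal_{0L1R2}$) is left (resp.\ right) Kan extended if and only if its restriction to $\Bcal_{0L2}$ (resp.\ $\Bcal_{0LR2}$) is, which is immediate from the pointwise formula since the relevant slices agree.

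What the paper's choice buys is that the maps in the statement are produced for free and no transposition or coherence matching is needed; your approach has to manufacture the factorization $j^*i_!F \to H_L \to j^*i_*F$ out of the recollement data and then verify that the composite is the canonical comparison, which is exactly the step you (rightly) identify as the sticking point. Both arguments are valid, but the $\Lan$/$\Ran$ embedding is the cleaner route here.
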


\begin{proof}
    We first show that there is a pushout square
    \begin{equation*}
	    \begin{tikzcd}
	        \Bcal_{LR} \arrow[r, hooked] \arrow[d, hooked] & \Bcal_{0LR2} \arrow[d, hooked] \\
	        \Bcal_{L1R} \arrow[r, hooked] & \Bcal_{0L1R2} \period
	    \end{tikzcd}
    \end{equation*}
    By \cref{cor:ff-and-ff} all we need to check is that $\Map_{\Bcal}((x,1), (y,i))$ and $\Map_{\Bcal}((y,i), (x,1))$ are computed correctly for $i=0,2$.
    (Without loss of generality it suffices to only consider the former.)
    We compute
    \begin{equation*}
        |(\Bcal_{L1R})_{(x,1)/} \times_{\Bcal_{LR}} (\Bcal_{0LR2})_{/(y,0)}| = \emptyset
        \isomorphism
        \Map_{\Bcal_{0L1R2}}((x,1), (y,0))
    \end{equation*}
    and
    \begin{align*}
        |(\Bcal_{L1R})_{(x,1)/} \times_{\Bcal_{LR}} (\Bcal_{0LR2})_{/(y,2)}| &\equivalent |\Bcal_{x/} \crosslimits_{\Bcal} \Bcal_{/y}| \equivalent \Map_{\Bcal}(x,y) \\
        &\equivalence \Map_{\Bcal_{0L1R2}}((x,1), (y,2)) \period
    \end{align*}
    so the square is a pushout.

    Mapping into $\Vcal$ we get that the right-most square in the diagram
	\begin{equation*}
		\begin{tikzcd}
			\Fun(\Bcal_{012}, \Vcal) \arrow[r, "\Lan", hooked] \arrow[d] & \Fun(\Bcal_{0L12}, \Vcal) \arrow[r, "\Ran", hooked] \arrow[d] & \Fun(\Bcal_{0L1R2}, \Vcal) \arrow[r] \arrow[d] & \Fun(\Bcal_{L1R}, \Vcal) \arrow[d] \\
			\Fun(\Bcal_{02}, \Vcal) \arrow[r, "\Lan"', hooked] & \Fun(\Bcal_{0L2}, \Vcal) \arrow[r, "\Ran"', hooked] & \Fun(\Bcal_{0LR2}, \Vcal) \arrow[r] & \Fun(\Bcal_{LR}, \Vcal)
		\end{tikzcd}
	\end{equation*}
    is cartesian. 
    The horizontal functors in the left-most square are given by left Kan extension along the full inclusions $\Bcal_{012} \hookrightarrow \Bcal_{0L12}$ and $\Bcal_{02} \hookrightarrow \Bcal_{0L2}$, respectively.
    This square is cartesian: both horizontal functors are fully faithful and by inspecting the pointwise formula for left Kan extension we see that a functor $F\colon \Bcal_{0L12} \to \Vcal$ is left Kan extended from $\Bcal_{012}$ if and only if $\restrict{F}{\Bcal_{0L2}} \colon \Bcal_{0L2} \to \Vcal$ is left Kan extended from $\Bcal_{02}$.
    The same argument shows that the middle square, in which the horizontal functors are given by right Kan extension, is cartesian.
    The claim now follows by using pullback pasting to combine the three squares to the desired pullback square.
\end{proof}

\begin{notation}\label{ntn:Reedy_extension_D}
	Let $ i \colon \Acal \hookrightarrow \Bcal$ be a Reedy extension with complementary \category $ \Ccal $.
	Let $\Dcal \subset \Bcal \times [2]$ be the (non-full) subcategory uniquely described by the properties:
	\begin{enumerate}
	    \item $\Dcal^\simeq \simeq \Acal^\simeq \times \{0 < 2\} \sqcup \Ccal^\simeq \times \{1\}$ \period 

	    \item $\Acal \times \{0 < 2\} \subset \Dcal$ is full and $\Ccal \times \{1\} \subset \Dcal$ is full.

	    \item $\Map_\Dcal( (a,0), (c,1) ) = \Map_\Bcal(a, c)$ and $\Map_\Dcal( (c,1), (a,2) ) = \Map_\Bcal(c, a)$ for all $a \in \Acal$ and $c \in \Ccal$.
	\end{enumerate}
	We denote the canonical projections by $ \pi\colon \Dcal \to [2] $ and $ p\colon \Dcal \to \Bcal $, and the full inclusion $ \Acal \times \{0 < 2\} \hookrightarrow \Dcal$ by $k$.
\end{notation}

\begin{lemma}\label{lem:Reedy-pushout}
	With \Cref{ntn:Reedy_extension_D}, the square
	\begin{equation*}
		\begin{tikzcd}
		    \Acal \times \{0 < 2\} \arrow[d, "\pr_\Acal"'] \arrow[r, "k", hooked] & \Dcal \arrow[d, "p"] \\
		    \Acal \arrow[r, "i"', hooked] & \Bcal
		\end{tikzcd}
	\end{equation*}
	is a pushout square.
\end{lemma}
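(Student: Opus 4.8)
The plan is to form the pushout $\Pcal \coloneq \Acal \coproductlimits^{\Acal \times \{0<2\}} \Dcal$ together with the comparison functor $\Phi \colon \Pcal \to \Bcal$ induced by $i \colon \Acal \hookrightarrow \Bcal$ and $p \colon \Dcal \to \Bcal$ (these agree after restriction along $\pr_\Acal$ and $k$, since both send an object to its underlying object of $\Bcal$), and to show that $\Phi$ is an equivalence. Essential surjectivity is immediate from condition~(1) in \cref{defn:Reedy-extension}: every object of $\Bcal$ is equivalent to one of $\Acal$ or of $\Ccal$, and $\Phi$ hits $\Acal$ through $\bar i \colon \Acal \to \Pcal$ and hits $\Ccal$ through $\Ccal \times \{1\} \hookrightarrow \Dcal \xrightarrow{\bar p} \Pcal$. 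Since moreover the objects of $\Pcal$ are precisely those of $\Acal$ together with those of $\Ccal$ — the objects $(a,0)$, $(a,2)$ of $\Dcal$ and the object $a$ of $\Acal$ all get glued together — it remains only to check that $\Phi$ induces an equivalence on each of the four types of mapping anima, according to whether the source and target lie over $\Acal$ or over $\Ccal$.

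Since $k \colon \Acal \times \{0<2\} \hookrightarrow \Dcal$ is fully faithful, \Cref{thm:fully_faithful_functors_are_stable_under_pushout} shows $\bar i$ is fully faithful, which together with full faithfulness of $i$ settles $\Map_\Pcal(\bar i a, \bar i a') \simeq \Map_\Bcal(a, a')$. For the two ``mixed'' mapping anima I would apply \cref{cor:map3types} to the span $\Dcal \xleftarrow{k} \Acal \times \{0<2\} \xrightarrow{\pr_\Acal} \Acal$. The crucial computation is that, for $c \in \Ccal$,
\begin{align*}
    \Dcal_{(c,1)/} \crosslimits_\Dcal (\Acal \times \{0<2\}) &\simeq \Acal \crosslimits_\Bcal \Bcal_{c/} \comma \\
    (\Acal \times \{0<2\}) \crosslimits_\Dcal \Dcal_{/(c,1)} &\simeq \Acal \crosslimits_\Bcal \Bcal_{/c} \comma
\end{align*}
because only the ``$2$''-copy (respectively the ``$0$''-copy) contributes — there being no morphism $1 \to 0$ in $[2]$ — and on that copy the mapping anima in $\Dcal$ agree with those of $\Bcal$ by \cref{ntn:Reedy_extension_D}. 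Feeding this into \cref{cor:map3types} gives $\Map_\Pcal(\bar p(c,1), \bar i a) \simeq \abs{\Bcal_{c/} \crosslimits_\Bcal \Acal_{/a}}$ and $\Map_\Pcal(\bar i a, \bar p(c,1)) \simeq \abs{\Acal_{a/} \crosslimits_\Bcal \Bcal_{/c}}$; since $\Acal_{/a}$ has a terminal object and $\Acal_{a/}$ an initial object, both lying over $a \in \Bcal$, these realizations are the fibres of the left fibration $\Bcal_{c/} \to \Bcal$ and the right fibration $\Bcal_{/c} \to \Bcal$ over $a$, namely $\Map_\Bcal(c, a)$ and $\Map_\Bcal(a, c)$, and one checks the comparison is the evident composition map.

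The main point is the remaining mapping anima $\Map_\Pcal(\bar p(c,1), \bar p(c',1))$, for which I would invoke \cref{prop:map4} for the same span with $b_0 = (c,1)$ and $b_1 = (c',1)$. Its top-left term $\abs{\Dcal_{(c,1)/} \crosslimits_\Dcal (\Acal \times \{0<2\}) \crosslimits_\Dcal \Dcal_{/(c',1)}}$ is \emph{empty}, since the left pullback forces the ``$2$''-copy and the right pullback the ``$0$''-copy. Hence the pushout square of \cref{prop:map4} degenerates to an equivalence
\begin{equation*}
    \Map_\Pcal(\bar p(c,1), \bar p(c',1)) \simeq \Map_\Ccal(c,c') \sqcup \abs{W} \comma
\end{equation*}
where $W = (\Acal \crosslimits_\Bcal \Bcal_{c/}) \crosslimits_\Acal \Ar(\Acal) \crosslimits_\Acal (\Acal \crosslimits_\Bcal \Bcal_{/c'})$ — concretely, the \category of factorizations $c \to a_0 \to a_1 \to c'$ with $a_0 \to a_1$ in $\Acal$ — and we used $\Map_\Dcal((c,1),(c',1)) = \Map_\Ccal(c,c')$ together with the identifications above. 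Inserting the identity $a_0 = a_1$ defines a fully faithful functor $\Bcal_{c/} \crosslimits_\Bcal \Acal \crosslimits_\Bcal \Bcal_{/c'} \to W$, and I expect its ``delete $a_1$'' left inverse (which collapses $a_0 \to a_1 \to c'$ to $a_0 \to c'$) to be a right adjoint to it; being adjoint functors, both induce equivalences on classifying anima, compatibly with composing down into $\Map_\Bcal(c, c')$. Combining, $\Map_\Pcal(\bar p(c,1), \bar p(c',1)) \simeq \Map_\Ccal(c,c') \sqcup \abs{\Bcal_{c/} \crosslimits_\Bcal \Acal \crosslimits_\Bcal \Bcal_{/c'}}$, which by condition~(2) in \cref{defn:Reedy-extension} is exactly $\Map_\Bcal(c, c')$, with $\Phi$ realizing this identification. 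Having verified all four cases, $\Phi$ is fully faithful and essentially surjective, hence an equivalence.

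The step I expect to be the main obstacle is the bookkeeping around $W$: one must set up the adjunction ``insert identity $\dashv$ delete $a_1$'', and the identifications of the (co)slices of $\Dcal$, so that they are compatible on the nose with the composition maps appearing both in \cref{prop:map4} and in \cref{defn:Reedy-extension}, so that the resulting equivalence $\Map_\Pcal(\bar p(c,1),\bar p(c',1)) \simeq \Map_\Bcal(c,c')$ is genuinely the one induced by $\Phi$.
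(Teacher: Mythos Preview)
Your proposal is correct and follows essentially the same route as the paper's proof: form the comparison functor from the pushout to $\Bcal$, deduce essential surjectivity, and verify full faithfulness using \cref{cor:map3types} for the mixed mapping anima and \cref{prop:map4} for the $\Ccal$-to-$\Ccal$ case, where the top-left term vanishes and the Reedy-extension condition finishes the job. Your treatment of the bottom-left term $W$ via the adjunction ``insert identity $\dashv$ delete $a_1$'' is exactly the content the paper compresses into the phrase ``after further rewriting''; the adjunction you describe does hold (it is the base change of $\Acal \hookrightarrow \Ar(\Acal)$ being right adjoint to the source projection), and the compatibility concern you flag at the end is not addressed explicitly in the paper either.
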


\begin{proof}
    As in the proof of \cref{cor:ff-and-ff}, let let $ \Pcal $ denote the pushout and consider the natural functor $F\colon \Pcal \to \Bcal$.
    Since $p\colon \Dcal \to \Bcal$ is essentially surjective, $F\colon \Pcal \to \Bcal$ is essentially surjective.
    In order to show that $ F $ is fully faithful we use the formulas for mapping anima from \Cref{cor:map3types,prop:map4}.

    Let $a \in \Acal$ and $(c,1) \in \Ccal \times \{1\} \subset \Dcal$.
    Then we compute
    \begin{align*}
        \Map_\Pcal(i(a), p(c,1)) &\simeq |\Acal_{a/} \crosslimits_{\Acal} (\Acal \times \{0 < 2\}) \crosslimits_{\Dcal} \Dcal_{/(c,1)}| \\ 
        &\simeq |(\Acal \times \{0\})_{(a,0)/} \crosslimits_{\Dcal} \Dcal_{/(c,1)}| \\
        &\simeq \Map_\Ccal(i(a), p(c,1)) \comma
    \end{align*}
    and similarly for the mapping anima $\Map_\Pcal(p(c,1), i(a))$.
    It remains to compute the fourth mapping anima, i.e., to show that for $(c_0,1), (c_1,1) \in \Ccal \times \{1\} \subset \Dcal$ the square
    \begin{equation*}\begin{tikzcd}[column sep = large]
        \abs{\Dcal_{(c_0,1)/} \crosslimits_{\Dcal} (\Acal \times \{0 < 2\}) \crosslimits_{\Dcal} \Dcal_{/(c_1,1)}} \arrow[r] \arrow[d] &
        \Map_{\Dcal}((c_0,1), (c_1,1)) \arrow[d, "\gbar"] \\
        \abs{\Dcal_{(c_0,1)/} \crosslimits_{\Dcal} (\Acal \times \{0 < 2\}) \crosslimits_{\Acal} \Ar(\Acal) \crosslimits_{\Acal} (\Acal \times \{0 < 2\}) \crosslimits_{\Dcal} \Dcal_{/(c_1,1)}} \arrow[r] & 
        \Map_{\Bcal}(c_0, c_1)
    \end{tikzcd}\end{equation*}
    is a pushout square.
    The top left anima is empty as we cannot factor the identity $\id{1}$ through $\{0,2\} \subset [2]$.
    The top right anima is $\Map_\Ccal(c_0, c_1)$.
    The bottom left anima can be simplified as the second factor must be in $\Acal \times \{2\}$ and the forth factor must be in $\Acal \times \{0\}$.
    In summary, the condition becomes that the map
    \begin{equation*}
        \abs{\Dcal_{(c_0,1)/} \crosslimits_{\Dcal} (\Acal \times \{2\}) \crosslimits_{\Acal} \Ar(\Acal) \crosslimits_{\Acal} (\Acal \times \{0\}) \crosslimits_{\Dcal} \Dcal_{/(c_1,1)}} 
        \sqcup \Map_{\Ccal}(c_0, c_1) 
        \longrightarrow
        \Map_{\Bcal}(c_0, c_1)
    \end{equation*}
    is an equivalence.
    After further rewriting,  the left term becomes 
    $\abs{\Bcal_{c_0/} \crosslimits_\Bcal \Acal \crosslimits_\Bcal \Bcal_{/c_1}}$
    and thus the map is an equivalence exactly be the definition of Reedy extension in \cref{defn:Reedy-extension}.
\end{proof}

\begin{proof}[Proof of \Cref{thm:Reedy}]
	Applying \cref{lem:Delta2} to $ \pi\colon \Dcal \to [2] $, we see that the right square in the diagram is 
	\begin{equation*}
		\begin{tikzcd}[sep=3em]
		    \Fun(\Bcal, \Vcal) \arrow[d, "i^*"'] \arrow[r, "p^*"] &
		    \Fun(\Dcal, \Vcal) \arrow[d, "k^*"{description}] \arrow[r] &
		    \Fun(\Ccal \times [2], \Vcal) \arrow[d, "\ev_{0 < 2}"] \\
		    \Fun(\Acal, \Vcal) \arrow[r, "\pr_{\Acal}^*"'] &
		    \Fun(\Acal \times \{0 < 2\}, \Vcal) \arrow[r] &
		    \Fun(\Ccal \times \{0 < 2\},\Vcal)
		\end{tikzcd}
	\end{equation*}
	is cartesian.
	By \Cref{lem:Reedy-pushout}, the left square is also cartesian.
	Hence the large outer square is cartesian, as desired.
\end{proof}


\DeclareFieldFormat{labelnumberwidth}{#1}
\printbibliography[keyword=alph, heading=references]
\DeclareFieldFormat{labelnumberwidth}{{#1\adddot\midsentence}}
\printbibliography[heading=none, notkeyword=alph]

\end{document}